\documentclass[11pt]{amsart}
\usepackage{amsfonts} 
\usepackage{amssymb}
\usepackage{amsthm}
\usepackage{amsmath} 
\usepackage{mathrsfs}
\usepackage{mathtools}
\usepackage{dsfont}
\usepackage{esint}
\usepackage{ulem}
\usepackage{marginnote}
\usepackage{array}
\usepackage{algorithm}
\usepackage{algpseudocode}
\usepackage{soul}
\usepackage{comment}

\usepackage{booktabs} 

\usepackage{MnSymbol}

\usepackage{longtable}

\usepackage[all]{xy}

\usepackage[english]{babel} 
\usepackage[left=3.5cm,right=3.5cm,top=3.5cm,bottom=3cm,headsep=0.7cm]{geometry}
\usepackage{xargs}

\usepackage{csquotes}
\usepackage{stmaryrd}

\usepackage{pgf,tikz}
\usetikzlibrary{positioning, calc, arrows.meta}
\usetikzlibrary{arrows}
\usepackage{subcaption}
\usepackage{graphicx}
\usepackage[parfill]{parskip}
\usepackage{tikz-cd}

\usepackage{enumerate}
\usepackage{enumitem}

\usepackage{hyperref}
\usepackage[nameinlink]{cleveref}
\usepackage{mathtools}
\hypersetup{
	colorlinks,
	linkcolor={blue!80!black},
	citecolor={blue!80!black}
}

\theoremstyle{definition}
\newtheorem{definition}{Definition}[section]

\theoremstyle{plain}
\newtheorem{theorem}[definition]{Theorem}
\newtheorem{proposition}[definition]{Proposition}
\newtheorem{lemma}[definition]{Lemma}

\theoremstyle{remark}
\newtheorem{remark}[definition]{Remark}

\newcommand{\N}{\mathbb N}

\newcommand{\R}{\mathbb R}
\newcommand{\C}{\mathbb C}
\newcommand{\B}{\N^{\N}}
\newcommand{\Cantor}{2^{\N}}
\newcommand{\dom}{\operatorname{dom}}
\newcommand{\id}{\operatorname{id}}
\newcommand{\Comp}{\mathrm{Comp}}
\newcommand{\SCIG}{\operatorname{SCI}_{\mathrm G}}
\newcommand{\SCImv}{\operatorname{SCI}^{\mathrm{mv}}_{\mathrm G}}
\newcommand{\puremv}{\operatorname{SCI}^{\lim,\mathrm{mv}}_{\Comp}}
\newcommand{\pure}{\operatorname{SCI}^{\lim}_{\Comp}}
\newcommand{\rk}{\operatorname{rk}_{\lim}}
\newcommand{\Lim}{\operatorname{lim}}
\newcommand{\lW}{\leq_{\mathrm W}}
\newcommand{\sW}{\leq_{\mathrm{sW}}}
\newcommand{\eW}{\equiv_{\mathrm W}}
\newcommand{\esW}{\equiv_{\mathrm{sW}}}
\newcommand{\nW}{\nleq_{\mathrm W}}
\newcommand{\LW}{<_{\mathrm W}}
\newcommand{\LPO}{\operatorname{LPO}}
\newcommand{\INF}{\operatorname{INF}}
\newcommand{\FIN}{\operatorname{FIN}}
\newcommand{\IVT}{\operatorname{IVT}}
\newcommand{\CC}{\operatorname{CC}_{[0,1]}}
\newcommand{\BWT}{\operatorname{BWT}_{2}}
\newcommand{\Solve}{\operatorname{Solve}}
\newcommand{\Ev}{\operatorname{Ev}}
\newcommand{\Names}{\mathcal N}
\newcommand{\one}{\mathbf 1}
\newcommand{\pair}[1]{\langle #1\rangle}
\newcommand{\norm}[1]{\lVert #1\rVert}
\newcommand{\abs}[1]{\lvert #1\rvert}
\newcommand{\pos}[1]{(#1)_{+}}

\title{\bf From raw Solvability Complexity Index proofs to Weihrauch degrees}

\begin{document}

\author[C.~Sorg]{Christopher Sorg$^1$}
\address[C.~Sorg]{
	\textup{Chair for Theoretical computer science, mathematics, and operations research}
	\newline \indent
	\textup{Department of Computer Science} \newline \indent
	\textup{University of the Bundeswehr Munich}
	\newline \indent
	\textup{85577 Neubiberg, Germany}}

\email{{\href{mailto:chr.sorg@unibw.de}{\textcolor{blue}{\texttt{chr.sorg@unibw.de}}}}
}

\footnotetext[1]{Inf1, University of the Bundeswehr Munich, Werner-Heisenberg-Weg 39, 85577 Neubiberg, Germany}

\begin{abstract}
The Solvability Complexity Index (SCI) provides an extensional limit-height formalism for recovering a target map $\Xi$ from finite samples of an evaluation interface $\Lambda$ by finite-height towers of pointwise limits. At first sight this sounds like a typical Type-2 question, so it seems natural to ask how deep and rich the connection of SCI approximation questions and Type-2 computability questions is. In this \textbf{note} I want to shape the picture my logical studies in \cite{SorgBridge, SorgWitness, SorgTransport} gave me to this day. I am trying to do that by giving explicit examples - most of them directly from the literature, in which case I extracted the crucial core proof patterns - as many people directly or implicitly expressed this wish. I am always open for discussions, questions and remarks about my work, so don't hesitate to contact me in these cases.
\end{abstract}

\maketitle
\tableofcontents
\clearpage

\begin{figure}[!h]
	\centering
	\begin{tikzpicture}[
		x=1cm,y=1cm,
		box/.style={draw,rounded corners=2pt,align=center,inner sep=7pt,
			font=\sffamily\small,text width=6.65cm,minimum height=1.15cm},
		wide/.style={box,text width=14.4cm},
		ann/.style={align=center,font=\sffamily\scriptsize,text width=6.1cm},
		arr/.style={-{Latex[length=2mm]},line width=.55pt}
		]
		\node[wide,minimum height=1.65cm] (tte) at (0,0) {
			\textbf{TRACED TTE FINITE-QUERY TRANSPORT}\\[4pt]
			$S\leq^{\mathrm{mv}}_{\mathrm{TTE},\mathrm{fq}}P$\\[3pt]
			Computable point maps $E,D$ and a uniform finite interface trace;\\
			$D$ is total and metrically continuous; solution inclusion holds.\\
			{\scriptsize [Definition 4.2 here]}
		};
		\node[box] (raw) at (-3.85,-4.00) {
			\textbf{RAW FINITE-QUERY TRANSPORT}\\[5pt]
			$S\leq^{\mathrm{mv}}_{\mathrm{raw},\mathrm{fq}}P$
		};
		\node[box] (strong) at (3.85,-4.00) {
			\textbf{STRONG WEIHRAUCH COMPARISON}\\[5pt]
			$\widehat F_S\leq_{\mathrm{sW}}\widehat F_P$
		};
		\draw[arr] ($(tte.south)+(-7.15,0)$) -- ($(raw.north)+(-3.3,0)$);
		\draw[arr] ($(tte.south)+(0.55,0)$) -- ($(strong.north)+(-3.3,0)$);
		\node[ann] at (-3.65,-2.55) {Forget computability\\[2pt]
			[Definition 4.2 here]};
		\node[ann] at (4.05,-2.55) {Forget the transport data\\[2pt]
			[Theorem 4.3 here; S26-T, Thm. 5.1\\
			for the single-valued source statement]\\
			Converse fails: Proposition 4.4 here};
		\node[box] (rawrank) at (-3.85,-7.30) {
			\textbf{RAW-HEIGHT COMPARISON}\\[5pt]
			$\operatorname{SCI}^{\mathrm{mv}}_G(S)
			\leq\operatorname{SCI}^{\mathrm{mv}}_G(P)$
		};
		\node[box] (ordinary) at (3.85,-7.30) {
			\textbf{ORDINARY WEIHRAUCH COMPARISON}\\[5pt]
			$[\widehat F_S]_{\mathrm W}\leq[\widehat F_P]_{\mathrm W}$
		};
		\draw[arr] ($(raw.south)+(-3.3,0)$) -- ($(rawrank.north)+(-3.3,0)$);
		\draw[arr] ($(strong.south)+(-3.3,0)$) -- ($(ordinary.north)+(-3.3,0)$);
		\node[ann] at (-3.65,-5.70) {Pull back every raw tower\\[2pt]
			[Theorem 4.3 here; S26-W, Thm. 4.10\\
			for the single-valued source statement]};
		\node[ann] at (4.05,-5.70) {Allow the original input name\\
			in the postprocessor\\[2pt]
			[Section 1.1 here]};
		\node[wide,minimum height=1.6cm] (rank) at (0,-10.30) {
			\textbf{ORDER-PRESERVING RANK PROJECTION}\\[4pt]
			$[f]_{\mathrm W}\longmapsto
			\operatorname{rk}_{\lim}(f)
			=\min\{k\in\mathbb N:f\leq_{\mathrm W}\lim\nolimits^{(k)}\}$\\[4pt]
			$\operatorname{rk}_{\lim}(\widehat F_S)
			\leq\operatorname{rk}_{\lim}(\widehat F_P)$\\[3pt]
			{\scriptsize [Theorem 4.3 / (15) here; S26-F, Def. 4.11 and Thm. 4.12]}\\
			{\scriptsize Non-injectivity: Theorem 3.2 here.}
		};
		\draw[arr] (ordinary.south) -- ($(rank.north)+(3.85,0)$)
		node[midway,right,font=\sffamily\scriptsize,xshift=3pt] {Take the least bound};
		\node[wide,minimum height=2.0cm] (pure) at (0,-13.60) {
			\textbf{FOR EACH FIXED REPRESENTED TARGET $f=\widehat F_R$}\\[5pt]
			$\displaystyle
			\operatorname{SCI}^{\lim,\mathrm{mv}}_{\mathrm{Comp}}(R)
			=\min\bigl\{k\in\mathbb N:\exists K\in\mathrm{Comp},\
			\lim\nolimits^{(k)}\!\circ K\preceq\mathcal N_f\bigr\}
			=\operatorname{rk}_{\lim}(f)$\\[6pt]
			{\scriptsize Normal form: S26-F, Thm. 4.27. Single-valued minimum: Theorem 1.2 here.}\\
			{\scriptsize Multivalued minimum: Definition 4.1 / (10) here.}\\
			{\scriptsize Equality of two ranks, not an identification of exact degrees.}
		};
		\node[wide,dashed,minimum height=2.0cm] (impl) at (0,-17.10) {
			\textbf{ADDITIONAL IMPLEMENTATION WORK - NOT A RAW-ORDER IMPLICATION}\\[5pt]
			For the fixed target $f$, construct one computable $K$ and prove\\
			$\lim\nolimits^{(m)}\!\circ K\preceq\mathcal N_f$,
			including every iterated-limit domain condition.\\[4pt]
			This proves a pure-height upper bound $\leq m$; sharpness needs a lower bound.\\[4pt]
			{\scriptsize Worked implementations: Theorems 2.4, 3.2 and 9.1 here.}\\
			{\scriptsize No map from raw classes to pure heights: Proposition 7.4 and Theorem 9.1 here.}
		};
	\end{tikzpicture}
	\caption{Illustration of the meta-idea of the notes. The dashed box is a proof obligation, not an implication or a map from raw towers or raw degrees.}
	\label{fig:rank-trace-map}
\end{figure}

\clearpage

\part{Proof-level translations of literature classifications}\label{part:literature}
\section{Given setting and needed additional results}\label{sec:bridge}

\subsection{Raw SCI, representations, and exact degrees}

Throughout, $\N=\{0,1,2,\ldots\}$; this merely reindexes the positive-integer conventions in some sources. Fix computable codings of finite tuples, of $\N^2$ by $\N$, and of rational numbers by natural numbers.

\begin{definition}[Raw framework: {\cite[Definitions 2.4-2.9, 2.17-2.18]{SorgBridge}}]\label{def:raw}
An SCI problem is a quadruple
\[
 P=(\Xi,\Omega,(M,d),\Lambda),\qquad \Xi:\Omega\to M,\quad
 \Lambda\subseteq\C^\Omega,
\]
satisfying
\[
 \bigl(\forall\lambda\in\Lambda\;\lambda(A)=\lambda(B)\bigr)
 \ \Longrightarrow\ \Xi(A)=\Xi(B).
\]
A general algorithm is a pair $(\Gamma,Q_\Gamma)$ with $\Gamma:\Omega\to M$ and a finite nonempty set $Q_\Gamma(A)\subseteq\Lambda$ for each $A$, such that agreement with $A$ on $Q_\Gamma(A)$ implies both
\[
 \Gamma(B)=\Gamma(A),\qquad Q_\Gamma(B)=Q_\Gamma(A).
\]
A height-$k$ raw tower consists of deepest maps $\Gamma_{n_k,\ldots,n_1}$ that are general algorithms, with all successive pointwise limits existing in $M$ and
\[
 \Xi(A)=\lim_{n_k\to\infty}\cdots\lim_{n_1\to\infty} \Gamma_{n_k,\ldots,n_1}(A).
\]
The intermediate limit maps need \textit{not} themselves be general algorithms. Height $0$ means that $\Xi$ itself is a general algorithm. The least height is $\SCIG(P)$.
\end{definition}

This is the deepest-level-only convention of \cite[Definitions 2.4, 2.5, 2.9, 2.17 and 2.18]{SorgBridge}. A harmless fixed dummy query supplies nonemptiness whenever an algorithm otherwise uses no queries. Crucially, raw type~G places \textit{no computability or continuity restriction} on the function of the finitely many exact complex values that is used to produce the output.

We assume in this work raw SCI interfaces to be nonempty. If no finite-height raw tower exists, set $\SCIG(P)=\infty$.

Whenever a construction would otherwise use no queries, it queries and ignores a fixed evaluation $\lambda_\circ\in\Lambda$. This padding preserves the given interface.

A representation of a set $X$ is a partial surjection $\delta_X:\subseteq\B\to X$. We use the usual real Cauchy representation $\rho$: if $r_j$ is the rational coded by $p(j)$, then
\[
 \rho(p)=x\quad\Longleftrightarrow\quad
 \forall j\;\abs{r_j-x}\leq 2^{-j}.
\]
A name of a sequence of reals gives such names uniformly in the sequence index. Cantor space has its identity representation, and $2=\{0,1\}$ has the discrete representation $\delta_2(p)=p(0)$ when $p(0)\in2$.

If $\Lambda=(\lambda_i)_{i\in\N}$ is countable, define
\[
 \Ev_\Lambda(A)=(\lambda_i(A))_{i\in\N},\qquad
 I_\Lambda=\Ev_\Lambda(\Omega)\subseteq\C^\N.
\]
The work in \cite{SorgBridge} equips $I_\Lambda$ with the \textit{subspace product Cauchy representation}. The consistency condition gives a well-defined factor
\[
 \widehat\Xi:I_\Lambda\to M,\qquad
 \widehat\Xi(\Ev_\Lambda(A))=\Xi(A).
\]
An output representation $\delta_M$ is additional data. A countable enumeration is itself effective data: invariance is under \textit{computable} reindexings, not arbitrary permutations presented without effective access.

For represented problems $f:\subseteq X\rightrightarrows Y$ and $g:\subseteq U\rightrightarrows V$, write $f\lW g$ if computable name-level functionals $K,H$ satisfy
\[
 p\longmapsto H\bigl(p,G(K(p))\bigr)\quad\text{realizes }f
\]
for \textit{every} realizer $G$ of $g$. For strong reducibility $f\sW g$, the postprocessor is $H(G(K(p)))$, without the original input. The degree $\one$ denotes the degree of computable problems with a computable instance; it is not the empty-domain bottom degree.

\subsection{The bridge is a rank theorem, not a degree theorem}

For $P\in\B$, regard $P(\pair{s,j})$ as an array and define
\[
 \Lim(P)(j)=\lim_{s\to\infty}P(\pair{s,j}),
\]
with domain consisting of arrays for which each column is eventually constant. Put
\[
 \Lim^{(0)}=\id_\B,\qquad
 \Lim^{(k+1)}=\Lim\circ\Lim^{(k)},\qquad
 \rk(f)=\min\{k:f\lW\Lim^{(k)}\}.
\]
The minimum is $\infty$ when no finite $k$ is possible. These are \textit{compositions} of limit operators; the derivative notation $f'$ introduced below has a different definition.

For a represented problem $f$, let
\[
 \Names_f(p)=\{q:\delta_Y(q)\in f(\delta_X(p))\}
\]
on names of inputs in $\dom(f)$. A single-valued partial functional $F$ tightens $\Names_f$, written $F\preceq\Names_f$, if it is defined on all these names and $F(p)\in\Names_f(p)$ there.

\begin{theorem}[Source normal form and specialization: {\cite[Theorems 4.27 and 8.25]{SorgBridge}}]\label{thm:bridge}
For $\mathcal R=\Comp$, the pure implemented height is
\[
 \pure(P)=\min\bigl\{k:\exists\text{ computable }K\quad \Lim^{(k)}\circ K\preceq\Names_{\widehat\Xi}\bigr\}.
\]
With fixed representations as above,
\begin{equation}\label{eq:bridge}
 \pure(P)=\rk(\widehat\Xi).
\end{equation}
More generally, the same statement holds for a class $\mathcal R$ when its reduction notion is defined and the corresponding $\mathcal R$-normal-form property holds. No such property is presumed for arbitrary classes.
\end{theorem}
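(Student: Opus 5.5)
The statement has two parts: the normal form for $\pure(P)$, which I take from \cite[Theorem 4.27]{SorgBridge}, and the identity \eqref{eq:bridge}. So the real task is to prove
\[
 \min\{k:\exists\text{ computable }K\ \ \Lim^{(k)}\circ K\preceq\Names_{\widehat\Xi}\}=\min\{k:\widehat\Xi\lW\Lim^{(k)}\}.
\]
I will prove the two inequalities level by level. For each fixed $k$ I show that a computable $K$ with $\Lim^{(k)}\circ K\preceq\Names_{\widehat\Xi}$ exists if and only if $\widehat\Xi\lW\Lim^{(k)}$. The case $k=\infty$ then follows automatically.

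\emph{From a tightening to a reduction.} Suppose $\Lim^{(k)}\circ K\preceq\Names_{\widehat\Xi}$ with $K$ computable. Take $K$ itself as the preprocessor and the projection $H(p,q)=q$ as the postprocessor. Now let $G$ be any realizer of $\Lim^{(k)}$ and $p$ any name of a point of $I_\Lambda$. By the definition of $\preceq$, $K(p)$ lies in $\dom(\Lim^{(k)})$, and this domain condition is exactly what the tightening guarantees. Since $\Lim^{(k)}$ is single-valued on Baire space, $G(K(p))=\Lim^{(k)}(K(p))\in\Names_{\widehat\Xi}(p)$. So $\widehat\Xi\lW\Lim^{(k)}$, and in fact the reduction is strong.

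\emph{From a reduction to a tightening.} Suppose computable $K,H$ witness $\widehat\Xi\lW\Lim^{(k)}$. Since $\Lim^{(k)}$ is single-valued, $p\mapsto H(p,\Lim^{(k)}(K(p)))$ lies in $\Names_{\widehat\Xi}(p)$ for all relevant $p$. I will absorb the input and the postprocessor into a single computable preprocessor $K'$. I use two standard facts about the limit map.
\begin{enumerate}
\item[(a)] There is a computable $J$ with $\Lim^{(k)}(J(p,r))=\langle p,\Lim^{(k)}(r)\rangle$ for all $r\in\dom\Lim^{(k)}$. Its construction: pair $r$ with the constant-in-$s$ array copy of $p$, iterated $k$ times.
\item[(b)] For a computable $H$ there is a computable $\tilde H$ with $\Lim^{(k)}(\tilde H(x))=H(\Lim^{(k)}(x))$ whenever the right-hand side is defined.
\end{enumerate}
For (b) I argue by induction on $k$. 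The base case is $k=1$: given an array converging to $y$, at stage $s$ output $H$ run for $s$ steps on the current approximation of $y$, padding undefined outputs by a default value. Each output digit of $H(y)$ depends on a finite prefix of $y$, which eventually stabilises, so the columns converge. For $k>1$, write $\Lim^{(k)}=\Lim\circ\Lim^{(k-1)}$ and push $H$ inward one level at a time.

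With (a) and (b) in hand, set $K'=\tilde H\circ J\circ(\id,K)$. Then $\Lim^{(k)}\circ K'(p)=H(p,\Lim^{(k)}K(p))$, so $\Lim^{(k)}\circ K'\preceq\Names_{\widehat\Xi}$. The domain conditions hold at every iterated level by construction.

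The main obstacle is (b). The delicate point is that the approximations fed to $H$ at intermediate stages may lie outside $\dom H$. The padding must still yield eventually constant columns at each of the $k$ nested levels, not only at the top, so that every iterated-limit domain condition is met. I would handle this by always emitting the current best output digit together with a fallback value. This way divergence on wrong guesses only affects finitely many stages of each column.
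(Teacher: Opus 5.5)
Your proof is correct, but it does more than the paper's proof. The paper simply cites the normal form \eqref{eq:nf} from \cite[Theorem 4.27]{SorgBridge} and takes the least $k$ on both sides. You instead prove \eqref{eq:nf} for $\Comp$ level by level. Strictly, then, the first display is the definition of the pure height, and the levelwise equivalence you establish is the content of Theorem 4.27 itself rather than something you take from it.

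Your forward direction is immediate. It also shows that the reduction can be taken strong, with the projection as postprocessor. Your backward direction absorbs the postprocessor into the preprocessor by pairing with constant arrays and by bounded-simulation lifting. That is exactly the mechanism of \Cref{lem:lift}. As you note, totality of the one-level simulation array is what makes the iteration work and secures every intermediate limit-domain condition.

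Your route buys transparency. It shows that the equivalence needs only closure of the functionals under composition and pairing, together with jump-lifting of computable maps. This also explains why the statement's final clause about a general class $\mathcal R$ must presuppose the normal form rather than derive it.

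The paper's route buys brevity, and it gets that clause for free: once an $\mathcal R$-normal form is assumed, taking minima is all that remains. Your proposal does not mention this clause, so add that one sentence.
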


\begin{proof}
The computable normal form is
\begin{equation}\label{eq:nf}
 f\lW\Lim^{(k)}\quad\Longleftrightarrow\quad
 \exists\text{ computable }K\; \Lim^{(k)}\circ K\preceq\Names_f.
\end{equation}
This is \cite[Theorem 4.27]{SorgBridge}; taking the least $k$ gives \eqref{eq:bridge}, which is \cite[Definition 8.24 and Theorem 8.25]{SorgBridge}. Its crucial content for an application is the existence of \textit{one} preprocessor $K$ producing the whole array, not separate computability claims for individual estimators.
\end{proof}

The conditions denoted P1-P5 in \cite[Theorem 8.26]{SorgBridge} are implemented here as follows: fixed represented input and output spaces; Cauchy names rather than computationally atomic exact real values; computable base functionals; the usual closure of these functionals under identities, pairing, projections, computable constants and composition; and \eqref{eq:nf}. We use this \textit{sufficient concrete package}.

Three different conclusions must not be conflated:
\[
 \boxed{\SCIG(P)=k}\qquad
 \boxed{\pure(P)=\rk(\widehat\Xi)=k}\qquad
 \boxed{\widehat\Xi\eW D}.
\]
The first box does not generally imply the second. Even the second only says
\[
 \widehat\Xi\lW\Lim^{(k)},\qquad
 \widehat\Xi\nW\Lim^{(k-1)}\quad(k>0),
\]
not $\widehat\Xi\eW\Lim^{(k)}$. An exact identification with $D$ requires \textit{both} $\widehat\Xi\lW D$ and $D\lW\widehat\Xi$.

\begin{remark}[What ``depends on the choices'' means]\label{rem:choices}
Changing a representation within its computable equivalence class does not change an ordinary Weihrauch degree. Changing the base class $\mathcal R$ changes the reduction notion, not the already fixed ordinary degree of a represented problem. Thus there are not several answers for one fully fixed represented problem. There are different represented problems or different reduction notions. The IVT examples below make these changes explicit.
\end{remark}

\subsection{Is this exactly the SCI-Weihrauch intermediate hierarchy?}\label{sec:intermediate}
There is a relevant intermediate hierarchy, but three different mathematical structures must be kept separate.

\textbf{The topological hierarchy from \cite{SorgBridge}.}
For a topologized SCI problem $P_\tau$, \cite[Definitions 6.12-6.14 and Theorem 6.16]{SorgBridge} restricts the deepest general algorithms by a regularity label $r$, and organizes
\[
 \mathcal H_{(r,k)}=\{P_\tau:\operatorname{SCI}_{\mathrm G,r}(P_\tau)\leq k\}.
\]
For example $r=\mathrm{Cont}$ or $r=\mathrm{Bor}$ requires continuity or Borel measurability of the relevant object-level base maps. The optional query-policy parameter $q\in\{\mathrm{fix},\mathrm{ad}\}$ is introduced in \cite[Definitions 6.17-6.18 and Proposition 6.19]{SorgBridge}. This is the concrete regularity/height ``intermediate hierarchy'' in Section~6. Its relation to descriptive complexity is developed in Theorems~6.25-6.26 of that source: continuous deepest maps imply the corresponding Baire-class upper bound, and, in the standard single-valued represented-Polish setting stated there, continuous Weihrauch reduction to $\Lim^{(k)}$ characterizes the corresponding Borel-measurability level. The effective characterization used there is \cite[Theorem 6.5]{BGP}.

These are not unrestricted claims about arbitrary representations or arbitrary topological spaces. In particular, a continuous map need not have a computable realizer, and separate realizers for each indexed estimator need not be uniformly obtainable. Membership in the topological intermediate hierarchy therefore does not automatically give the computable preprocessor in \eqref{eq:nf}.

\textbf{The effective bridge is a name-level refinement.}
Fixing representations and $\mathcal R=\Comp$, define the certificate condition
\[
 \mathcal C_k(f):\quad \exists K\in\Comp\quad \Lim^{(k)}\circ K\preceq\Names_f.
\]
By \cite[Theorem 4.27]{SorgBridge}, this is equivalent to $f\lW\Lim^{(k)}$. Definition~8.24 adds this globally uniform implementation requirement; Theorem~8.25 identifies its minimum height with $\rk(f)$. Thus the bridge gives an \textit{exact correspondence between two rank descriptions after the effective choices and normal form have been fixed}. It does not identify the entire topological hierarchy with ordinary Weihrauch degrees. We keep $\mathcal R=\Comp$ throughout our degree calculations: replacing it by continuous or Borel functionals would change the reduction notion.

\textbf{An exact degree contains more information than this rank.}
There is a well-defined order-preserving projection
\[
 [f]_{\mathrm W}\longmapsto \min\{k:f\lW\Lim^{(k)}\},
\]
as in \cite[Definition 4.11 and Theorem 4.12]{SorgBridge}. It is many-to-one. In this note, the single-valued problems $\INF$ and $\Lim^{(2)}$ both have rank $2$ but are not equivalent (\Cref{thm:inf-degree}). Conversely, the unique-root IVT problems in \Cref{thm:literal} all have the same raw SCI value while their represented ranks and degrees differ. Hence restricting to single-valued problems does not repair either failed identification.

\textbf{Why single-valuedness matters, and where it does not.}
The raw definition in \cite[Definition 2.4]{SorgBridge}, and its pure package in Definition~8.24, have single-valued $\Xi$. The ordinary Weihrauch lattice and the normal form in Theorem~4.27 also accommodate multivalued problems. Although $\Lim^{(k)}$ is single-valued on names, its output can be a name of \textit{one permissible answer} to a multivalued problem; this does not turn the underlying relation into a single-valued map. We explicitly extend the raw and pure definitions when discussing ordinary IVT in \Cref{def:mv}. The two literature problems and the unique-root companion already fit the literal single-valued definition.

\textbf{Scope of the axiomatic claims used.}
Our arguments require the computable normal form and the definitions of the two minima, not the full minimality claim in \cite[Theorem 8.26]{SorgBridge}. In particular, ``constants'' in our computable package means \textit{computable} constant Baire-space maps. Taken literally, a demand for \textit{all} constant Baire-space maps, as in that source's Definition~8.12, is not satisfied by $\Comp$: a map constantly equal to a noncomputable sequence is not computable. This harmless restriction for the present constructions is stated explicitly rather than silently importing the stronger adequacy language. No closure assertion about noncomputable auxiliary classes is needed here.

\subsection{Three implementation lemmas}

\begin{lemma}[Explicit real-limit compiler supplied here]\label{lem:real-limit}
Given uniformly computable rational numbers $(x_n)_{n\in\N}$ that converge to a real $x$, with no supplied convergence rate, one can uniformly compute an array $A(s,k)$ of rational codes such that
\[
 \Lim(A)\text{ is a }\rho\text{-name of }x.
\]
The computation may be relative to an arbitrary input name, using the same machine for all $s,k$ and all inputs satisfying the convergence promise.
\end{lemma}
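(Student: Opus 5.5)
The plan is to build the array column by column with a \emph{sticky} update rule. The obvious choice, taking $A(s,k)$ to be $x_s$ rounded to a dyadic grid of mesh $2^{-k}$, fails. If $x$ lies on a grid boundary, the rounded values can oscillate forever, so the column need not be eventually constant and $\Lim(A)$ would be undefined. The main obstacle is therefore to make each column eventually constant without knowing a rate of convergence. I would handle it by changing the value in column $k$ only when the current approximation is clearly too far away.

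Concretely, for each $k$ I set $A(0,k)=x_0$ and
\[
 A(s+1,k)=\begin{cases} A(s,k) & \text{if } \abs{x_{s+1}-A(s,k)}\leq 2^{-k-1},\\ x_{s+1} & \text{otherwise.}\end{cases}
\]
Each step compares rational numbers exactly, which is decidable. The whole array is therefore produced by one machine, uniformly in $s$, $k$ and any oracle input name from which the $x_n$ are computed. This gives the uniformity clause of the statement: no case distinction about the input is needed, only the convergence promise.

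Next comes eventual constancy of column $k$. Since $(x_n)$ converges to $x$, I choose $S$ so that $\abs{x_t-x}<2^{-k-2}$ for all $t\geq S$. Then $\abs{x_s-x_t}<2^{-k-1}$ for all $s,t\geq S$. There are two cases.
\begin{itemize}
\item If column $k$ is never reset after stage $S$, it is already constant from $S$ on.
\item If it is reset at some stage $t\geq S$, its value becomes $x_t$. Every later $x_s$ is within $2^{-k-1}$ of $x_t$, so no further reset ever happens.
\end{itemize}
In either case the column is eventually constant; call its limit $c_k$.

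Finally I verify $\abs{c_k-x}\leq 2^{-k}$. In the second case $c_k=x_t$ with $t\geq S$, so $\abs{c_k-x}<2^{-k-2}$. In the first case no reset occurs after $S$, which means $\abs{x_s-c_k}\leq 2^{-k-1}$ for all $s>S$. Letting $s\to\infty$ gives $\abs{x-c_k}\leq 2^{-k-1}$. Hence $(c_k)_k$, encoded through the fixed rational coding, satisfies the defining condition of $\rho$, and $\Lim(A)$ is a $\rho$-name of $x$.
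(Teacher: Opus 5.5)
Your proof is correct. It reaches the same goal as the paper: select one of the given rationals by decidable finite comparisons so that each column changes only finitely often. The stabilization mechanism, however, is different.

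The paper recomputes at every stage the least index
\[
 N_{k,s}=\min\bigl\{n\leq s:\forall m\in\{n,\ldots,s\}\;\abs{x_n-x_m}\leq2^{-k-2}\bigr\}
\]
and outputs the code of $x_{N_{k,s}}$. Stabilization there is a least-unrefuted-candidate argument. The least index $N_k$ whose whole tail stays within $2^{-k-2}$ is never refuted. Each of the finitely many smaller indices is eventually refuted by a finite witness.

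You instead carry state from stage to stage with a hysteresis rule. You then show that after the convergence stage $S$ at most one reset occurs.

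What each buys:
\begin{itemize}
\item The paper's version gives a stateless description of each entry from the prefix $x_0,\ldots,x_s$. Its limit value $x_{N_k}$ is described intrinsically, and this is the form reused for vectors in \Cref{lem:hilbert-compiler}.
\item Your version needs only one comparison per step. Its limit depends on the reset history, which is harmless here.
\end{itemize}

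Both constructions transfer verbatim to any setting where the relevant distance comparisons are decidable. Both also make the columns eventually constant as sequences of codes, not merely convergent as reals, which is exactly what the domain of $\Lim$ requires.
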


\begin{proof}
For $s,k\in\N$, put $\varepsilon_k=2^{-k-2}$ and
\[
 N_{k,s}=\min\bigl\{n\leq s: \forall m\in\{n,\ldots,s\}\;\abs{x_n-x_m}\leq\varepsilon_k\bigr\}.
\]
The set is nonempty because $n=s$ qualifies, and its finite rational tests are decidable. There is at least one $n$ satisfying the same inequalities for \textit{all} $m\geq n$, by the Cauchy property. Let $N_k$ be the least such $n$. Every $n<N_k$ has a finite witness to its failure. Consequently $N_{k,s}=N_k$ for all sufficiently large $s$.

Let $A(s,k)$ be the code of $x_{N_{k,s}}$. Each column stabilizes to the code of $x_{N_k}$, and letting $m\to\infty$ gives $\abs{x_{N_k}-x}\leq\varepsilon_k\leq2^{-k}$. This proves the claim, including the domain condition for $\Lim$. Notice that we take a limit of \textit{codes that stabilize}, not just a limit of arbitrary Cauchy names.
\end{proof}

Define $\LPO(u)=1$ if some $u(t)=1$, and $\LPO(u)=0$ otherwise, for $u\in\Cantor$. Its parallelization is
\[
 \widehat{\LPO}(p)(i)=1\quad\Longleftrightarrow\quad \exists t\;p(i,t)=1.
\]

\begin{lemma}[Standard identity: {\cite[Theorem 6.7]{BGP}}; proof included]\label{lem:plpo}
$\widehat{\LPO}\eW\Lim$, and this degree is noncomputable.
\end{lemma}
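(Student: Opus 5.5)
We prove the two reductions $\widehat{\LPO}\lW\Lim$ and $\Lim\lW\widehat{\LPO}$ separately. Noncomputability then follows by a standard diagonal or topological argument.

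\textbf{Step 1: $\widehat{\LPO}\lW\Lim$.} Given an input $p$ read as a double sequence $p(i,t)$, the preprocessor $K$ outputs the array
\[
 K(p)(\pair{s,i})=\begin{cases}1 & \exists t\leq s\; p(i,t)=1,\\ 0&\text{otherwise.}\end{cases}
\]
Each column is monotone in $s$ and $\{0,1\}$-valued, hence eventually constant. Its limit is $1$ exactly when some $t$ has $p(i,t)=1$. So $\Lim(K(p))=\widehat{\LPO}(p)$, and the postprocessor is the identity. The domain condition of $\Lim$ holds on all of $\dom(\widehat{\LPO})=\B$ after reading $p$ in $\Cantor$ coordinates, i.e.\ replacing each nonzero value by $1$. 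This is even a strong reduction.

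\textbf{Step 2: $\Lim\lW\widehat{\LPO}$.} This is the main step. Given an array $P$ with each column $j$ eventually constant, we ask countably many $\Sigma^0_1$ questions in parallel, indexed by triples $(j,n,s)$:
\[
 q_{j,n,s}:\quad \exists s'\geq s\;\; P(\pair{s',j})\neq n .
\]
This question is $\Sigma^0_1$ uniformly in $(j,n,s)$, so a computable $K$ produces the corresponding instance of $\widehat{\LPO}$, with one row per triple. The postprocessor $H$ receives the answer bits. For each $j$ it searches for the first pair $(n,s)$ in a fixed enumeration of $\N^2$ whose answer is $0$, i.e.\ such that $P(\pair{s',j})=n$ for all $s'\geq s$. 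Such a pair exists by the stabilization promise, and any pair it finds must satisfy $n=\lim_s P(\pair{s,j})$. The search is computable in the answer sequence, and the resulting output is exactly $\Lim(P)$.

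The delicate point is bookkeeping: the $\widehat{\LPO}$ instance must be a single Cantor sequence coded by pairs, and $H$ must halt on every admissible input. The unbounded search terminates precisely because of the promise, so totality on $\dom(\Lim)$ comes from the domain condition.

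\textbf{Step 3: noncomputability.} It suffices to show that $\LPO\lW\widehat{\LPO}$, which is trivial using the constant row, and that $\LPO$ has no computable realizer. The latter holds because a computable, hence continuous, realizer would have to decide the output $0$ on the zero sequence after reading a finite prefix. That same prefix extends to a sequence containing a $1$, which is a contradiction. Since $\widehat{\LPO}\eW\Lim$, the common degree lies strictly above $\one$.
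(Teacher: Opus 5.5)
Your proof is correct and follows the paper's argument: the forward direction uses the stabilizing finite disjunctions, the converse uses parallel $\Sigma^0_1$ questions asking whether a column ever deviates after a given stage, and noncomputability comes from $\LPO$ on the all-zero input. The only cosmetic difference is that you index questions by triples $(j,n,s)$ with an explicit candidate value $n$, whereas the paper asks $\exists s\geq N\; a_s(j)\neq a_N(j)$ over pairs $(j,N)$ and outputs $a_N(j)$, which saves one index.
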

\begin{proof}
For the forward reduction, the finite-stage disjunctions $\max_{t\leq s}p(i,t)$ stabilize to the desired bits. Conversely, let $a_s(j)$ be an eventually constant natural-valued array. For each $(j,N)$ ask, in parallel, whether
\[
 \exists s\geq N\quad a_s(j)\neq a_N(j).
\]
Each is an LPO question, uniformly in the input array. For each $j$, search the answer table for an $N$ with answer $0$; one exists. Output $a_N(j)$. Noncomputability follows already from LPO: on the all-zero input, a finite computation cannot distinguish a later, unseen $1$. Equivalently, a computable array of halting-at-stage indicators has a noncomputable $\Lim$-output.
\end{proof}

For a representation $\delta$, define its jump by $\delta'=\delta\circ\Lim$ and iteratively $\delta^{(k)}=\delta\circ\Lim^{(k)}$. The derivative $f'$ is the \textit{same underlying relation} as $f$, but with the input representation replaced by its jump and its output representation left unchanged.

\begin{lemma}[Jump lifting and strong monotonicity: {\cite[Proposition 5.6]{BGM}}]\label{lem:lift}
A computable map between represented spaces remains computable when both representations are jumped. The construction iterates for every fixed finite number of jumps. Moreover,
\[
 f\sW g\quad\Longrightarrow\quad f'\sW g'.
\]
\end{lemma}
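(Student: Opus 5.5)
We need to prove Lemma (jump lifting): if $F$ is a computable realizer of $f:X\to Y$ with respect to $\delta_X,\delta_Y$, then some computable $F'$ realizes $f$ with respect to $\delta_X',\delta_Y'$; iterating handles $k$ jumps; and $f\sW g$ implies $f'\sW g'$.

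The central object is a computable functional $L$ on arrays with $\Lim\circ L = F\circ\Lim$ on $\dom(F\circ\Lim)$. Given an array $P$ whose columns stabilize to $p=\Lim(P)$, let $p_s(j)=P(\pair{s,j})$ be the $s$-th row. Define $L(P)(\pair{s,i})$ as follows: run the machine $M$ for $F$ on the finite prefix $p_s\restriction s$ for $s$ steps. If it has produced output position $i$, return that value; otherwise return a default $0$. Here $p_s\restriction s$ means the first $s$ entries of row $s$, and $M$ is chosen so that each output position is determined by a finite input prefix.

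Fix $i$ and let $p=\Lim(P)\in\dom(F)$. The machine outputs $F(p)(i)$ after reading some finite prefix $p\restriction m$ within some $t$ steps. Choose $s_0\ge\max(m,t)$ such that $p_s\restriction m=p\restriction m$ for all $s\ge s_0$; this is possible because only finitely many columns must stabilize.

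For every $s\ge s_0$, the simulation on $p_s\restriction s$ agrees with the run on $p$ up to the moment position $i$ is emitted, so it emits the same value. Hence column $i$ of $L(P)$ is eventually constant with limit $F(p)(i)$. This is exactly the stabilization ("domain") condition for $\Lim$ that has to be checked. It is the main delicate point: the computation on a row may read positions whose columns have not yet stabilized. The cure is that we only need eventual correctness at each fixed output position, and continuity guarantees it uses finitely many columns.

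So if $\delta_X'(P)=\delta_X(\Lim P)=x$, then $\delta_Y'(L(P))=\delta_Y(F(\Lim P))\in f(x)$, and $L$ realizes $f$ for the jumped representations.

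For $k$ jumps, apply the construction inductively, using $\delta^{(k+1)}=(\delta^{(k)})'$ since $\Lim^{(k+1)}=\Lim^{(k)}\circ\Lim$ up to the composition convention. This needs only a finite iteration of uniform constructions.

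For strong monotonicity, suppose $f\sW g$ via computable $K,H$, with $H\circ G\circ K$ realizing $f$ for every realizer $G$ of $g$. Let $\tilde K$ and $\tilde H$ be the lifts of $K$ and $H$ obtained as above, satisfying $\Lim\circ\tilde K=K\circ\Lim$ and $\Lim\circ\tilde H=H\circ\Lim$.

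Let $G'$ be any realizer of $g'$, and take an input name $P$ of $f'$.
1. Then $\tilde K(P)$ is a $\delta'$-name of a $g$-instance, because $\Lim\tilde K(P)=K(\Lim P)$.
2. Hence $G'(\tilde K(P))$ is an ordinary name of a $g$-answer; the output representation is unjumped.

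Here lifting $H$ would be wrong, because the output of $g'$ is not jumped. Instead, observe that $G:=G'\circ\tilde K_0$ would need a section of $\Lim$, which is not available. To avoid this, argue at the level of realizers. The map $q\mapsto G'(\tilde K(\text{any }P\text{ with }\Lim P=q))$ need not be well defined, so we take a different route: choose the reduction witnesses $\tilde K$ and $H$ directly. We show $H\circ G'\circ\tilde K$ realizes $f'$. For a fixed $P$, set $q=\Lim P$. Define a realizer $G_P$ of $g$ that agrees at the single point $K(q)$ with the value $G'(\tilde K(P))$; this is a valid $g$-answer name for $K(q)$. Extend $G_P$ arbitrarily elsewhere to some realizer, which exists since $g$ has realizers by choice.

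Then $H(G_P(K(q)))$ is a name of an $f$-answer for $\delta_X(q)=\delta_X'(P)$. The $f'$ output representation is unchanged, so this proves $f'\sW g'$ with witnesses $\tilde K,H$.
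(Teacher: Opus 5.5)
Your proof is correct and takes the paper's route: you build the bounded-simulation lift of a realizer, which is total on arrays and therefore iterable, and for the strong reduction you lift only the preprocessor and keep the postprocessor $H$ unchanged. Your realizer-surgery argument, which builds a realizer $G_P$ of $g$ taking the value $G'(\tilde K(P))$ at the name $K(\Lim P)$, makes explicit the step the paper only asserts: that the unchanged $H$ still works against an arbitrary realizer of $g'$.
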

\begin{proof}
Let a partial computable functional $F$ transform names, and suppose $p_s\to p$ coordinatewise with $p\in\dom(F)$. At stage $s$, for each output coordinate $j$, simulate the computation of $F(p_s)(j)$ for $s$ steps, and output the resulting natural number if it has halted, or $0$ otherwise. This defines a total computable array transformer on the arrays $p_s$.

For a fixed $j$, the true computation $F(p)(j)$ uses finitely many oracle values and finitely many steps. Eventually $p_s$ agrees with $p$ on all queried positions and the simulation budget is sufficient. The output coordinate therefore stabilizes to $F(p)(j)$. This proves the one-jump claim, even though intermediate $p_s$ need not be valid names for the original representation. The array transformer just constructed is total on arrays, so applying the same construction to it gives the iterated version. For a strong reduction, lift its preprocessor this way and leave its ordinary output postprocessor unchanged.
\end{proof}

\section{Literature example 1: solving infinite linear systems}\label{sec:lin}

The first example can be found in \cite[Theorem 5.2, equation (5.3); Section 12, Step III]{BAHNS}. Its target is the solution vector $A^{-1}b$. We specialize its supplied dispersion bound to a fixed computable function, and reconstruct the finite-section convergence argument below. The lower-bound sentence in Step III is expanded into a finite-query proof.

\subsection{The source problem and the representation fixed for comparison}
For this section use $H=\ell^2(\N_{\geq1};\C)$ with standard basis $(e_j)_{j\geq1}$, norm $\norm{\cdot}$, and orthogonal projections $P_m$ onto $\operatorname{span}\{e_1,\ldots,e_m\}$. Fix a \textit{computable} function $f:\N_{\geq1}\to\N_{\geq1}$ with $f(m)\geq m$; $f(m)=2m$ is a concrete choice. Following \cite[Definition 3.5]{BAHNS}, put
\[
 D_{f,m}(A)=\max\{\norm{(I-P_{f(m)})AP_m},\, \norm{P_mA(I-P_{f(m)})}\}.
\]
The promise that $A$ has dispersion bounded by $f$ means precisely $D_{f,m}(A)\to0$. It does \textit{not} supply a convergence rate. Define
\[
 \begin{split}
 \Omega_f&=\{A\in\mathcal B(H):A\text{ is invertible and }D_{f,m}(A)\to0\}\times H,\\
 \Xi_f(A,b)&=A^{-1}b,\qquad M=H,\quad d(x,y)=\norm{x-y}.
 \end{split}
\]
Here invertibility means a bounded everywhere-defined inverse. Take the effectively enumerated, tagged evaluation family
\[
 \lambda^A_{ij}(A,b)=\langle Ae_j,e_i\rangle\quad(i,j\geq1),\qquad
 \lambda^b_i(A,b)=\langle b,e_i\rangle\quad(i\geq1).
\]
One may add the constant evaluations $\lambda^f_m(A,b)=f(m)$ used in \cite[Remark 5.1]{BAHNS}. Because $f$ is fixed and computable, this addition changes neither raw SCI nor the represented degree. Matrix and vector coordinates determine $(A,b)$, so the consistency condition holds.

The represented problem, denoted $\Solve_f$, receives \textit{product Cauchy names of these individual coordinates}, restricted to the promised domain. This is not an operator-action representation of $A$, nor a norm-Cauchy representation of $b$. No bounds on $\norm{A}$, $\norm{A^{-1}}$, or $\norm{b}$, and no dispersion or tail moduli, are included. The output has the norm-Cauchy representation $\delta_H$: fix an effective enumeration $(v_j)$ of finitely supported rational complex vectors, and set
\[
 \delta_H(q)=x\quad\Longleftrightarrow\quad \forall k\in\N\;\norm{v_{q(k)}-x}\leq2^{-k}.
\]
This distinction between coordinate input and norm output is essential to the degree.

\begin{proposition}[Source reconstruction: {\cite[Theorem 5.2, (5.3)]{BAHNS}}]\label{prop:lin-raw}
For the fixed computable $f$ above, $\SCIG(P_f)=1$.
\end{proposition}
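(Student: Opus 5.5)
We establish the upper bound $\SCIG(P_f)\le 1$ with an explicit finite-section tower, and the lower bound $\SCIG(P_f)\ge 1$ with a finite-query adversary argument.

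\textbf{Upper bound.} For $n\ge1$ define
\[
 \Gamma_n(A,b)=\operatorname*{argmin}_{x\in P_nH}\norm{P_{f(n)}AP_nx-P_{f(n)}b},
\]
taking the minimal-norm least-squares solution. It is computed from the finitely many queries $\lambda^A_{ij}$ with $i\le f(n)$, $j\le n$, together with $\lambda^b_i$ with $i\le f(n)$. These are finitely many because $f$ is fixed. Raw type G places no regularity demand on the dependence on exact values, and the query set is constant in $A$, so $(\Gamma_n,Q_n)$ is a general algorithm.

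It remains to prove $\Gamma_n(A,b)\to A^{-1}b$ in norm. Write $x=A^{-1}b$ and $B_n=P_{f(n)}AP_n$, viewed as a map $P_nH\to P_{f(n)}H$.

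\begin{enumerate}[label=(\roman*)]
\item \emph{Uniform lower bound on $B_n$.} For $y\in P_nH$, $\norm{AP_ny}\ge\norm{A^{-1}}^{-1}\norm{y}$. Also $\norm{(I-P_{f(n)})AP_n}\le D_{f,n}(A)\to0$. Hence $\norm{B_ny}\ge(\norm{A^{-1}}^{-1}-D_{f,n}(A))\norm y\ge c\norm y$ with $c>0$ for all large $n$. So $B_n$ is injective with $\norm{B_n^{+}}\le c^{-1}$.
\item \emph{Residual bound.} Since $\Gamma_n$ minimizes the residual, $\norm{B_n\Gamma_n-P_{f(n)}b}\le\norm{B_nP_nx-P_{f(n)}b}$. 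The right side equals $\norm{P_{f(n)}A(P_n-I)x}\le\norm{A}\,\norm{(I-P_n)x}\to0$.
\item \emph{Convergence.} We have
\[
 \norm{B_n(\Gamma_n-P_nx)}\le\norm{B_n\Gamma_n-P_{f(n)}b}+\norm{P_{f(n)}b-B_nP_nx}\le 2\norm{A}\,\norm{(I-P_n)x}.
\]
By (i) this gives $\norm{\Gamma_n-P_nx}\le 2c^{-1}\norm A\,\norm{(I-P_n)x}\to0$, hence $\Gamma_n\to x$.
\end{enumerate}

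In the paper's formulation only one half of the dispersion condition is needed for this step. The step needing care is (i): the finite-section lower bound is exactly where $D_{f,n}\to0$ enters, with no rate required.

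\textbf{Lower bound.} Suppose $\Xi_f$ itself were a general algorithm with query map $Q$. Take $A=I$ and $b=0$, so $\Xi_f(A,b)=0$, and let $N$ exceed every index appearing in $Q(I,0)$.

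Consider $b'=e_N$ with $A=I$. This lies in $\Omega_f$, since $D_{f,m}(I)=0$. It agrees with $(I,0)$ on all queries in $Q(I,0)$. The general-algorithm property then forces $\Xi_f(I,e_N)=\Xi_f(I,0)=0$, yet $\Xi_f(I,e_N)=e_N\neq0$. This contradiction shows $\SCIG(P_f)\ge1$.

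If one prefers to perturb $A$, as in Step III of the source, then $A=I+e_Ne_N^{*}$ with $b=e_N$ also works, giving output $e_N/2\ne e_N$ against the instance $(I,e_N)$. The vector perturbation already suffices. Together the two bounds give $\SCIG(P_f)=1$.
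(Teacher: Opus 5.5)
Your proof is correct. The lower bound is the same $(I,0)$ versus $(I,e_N)$ argument as in the paper; the upper-bound convergence proof takes a different route.

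The paper reconstructs the source's thresholded normal-equation estimator: output $0$ if $\lambda_{\min}(C_m)\leq 1/m$, else $C_m^{-1}d_m$. It then introduces the non-finitely-queried comparison solution $y_m=B_m^{-1}c_m$ of the untruncated normal equations on $P_mH$. It bounds $\norm{y_m-x}$ by least-squares optimality in all of $H$, and controls $\norm{C_m^{-1}d_m-y_m}$ through the perturbation identities $B_m-C_m=E_m^*E_m$ and $c_m-d_m=E_m^*b$.

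You instead take the minimal-norm least-squares solution of the rectangular section. It is defined for every input without a threshold and agrees with the paper's estimator for all large $n$. You then prove a direct quasi-optimality bound: comparing residuals with the competitor $P_nx$ and using coercivity of $P_{f(n)}AP_n$ on $P_nH$ gives $\norm{\Gamma_n-P_nx}\leq 2c^{-1}\norm{A}\,\norm{(I-P_n)x}$ in one step.

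Your route is shorter and needs neither the auxiliary object $y_m$ nor the eigenvalue test. The paper's route stays faithful to the cited Step III. Its perturbation estimate also carries over essentially verbatim to the regularized estimator $(C_m+\varepsilon_mI_m)^{-1}d_m$ of \Cref{lem:lin-regularized}, which is no longer an exact residual minimizer. Your argument adapts there too, by comparing the Tikhonov functional at $P_nx$, at the cost of an extra $\sqrt{\varepsilon_n}\,\norm{x}$ term. Both proofs use only the half $\norm{(I-P_{f(n)})AP_n}\to0$ of the dispersion promise, and no rate.

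One small point in your optional matrix-perturbation variant: knowing that $\lambda^A_{NN}$ is unqueried at $(I,e_N)$ uses the second general-algorithm axiom, $Q(I,e_N)=Q(I,0)$. This holds for your choice of $N$ but should be stated.
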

\begin{proof}[Reconstruction of Section 12, Step III, with the lower bound expanded]
All finite-dimensional inverses below act on $P_mH$. Define
\[
 T_m=P_{f(m)}AP_m,\qquad C_m=T_m^*T_m,\qquad
 d_m=T_m^*P_{f(m)}b,
\]
and, writing $\lambda_{\min}$ for the least eigenvalue of a Hermitian matrix, set
\begin{equation}\label{eq:lin-raw}
 \Gamma_m(A,b)=
 \begin{cases}
 0,&\lambda_{\min}(C_m)\leq 1/m,\\
 C_m^{-1}d_m,&\lambda_{\min}(C_m)>1/m.
 \end{cases}
\end{equation}
This is the source's thresholded finite-section construction on the diagonal $n=f(m)$. Its fixed, finite query set consists of the first $f(m)$ rows and $m$ columns of $A$, and the first $f(m)$ coordinates of $b$. Exact tests and finite-dimensional inversion are permitted in raw type~G. The output is embedded in $H$ by zero extension.

Fix $(A,b)\in\Omega_f$ and write
\[
 x=A^{-1}b,\qquad \alpha=\norm{A^{-1}}^{-1}>0,\qquad
 E_m=(I-P_{f(m)})AP_m,\quad \eta_m=\norm{E_m}\longrightarrow0.
\]
For comparison introduce the \textit{untruncated} normal equations
\[
 B_m=P_mA^*AP_m,\qquad c_m=P_mA^*b,\qquad y_m=B_m^{-1}c_m.
\]
These are mathematical comparison objects, not finitely queried data. Since $B_m\geq\alpha^2 I_m$, $y_m$ is well defined and minimizes $\norm{Ay-b}$ over $y\in P_mH$. Therefore
\begin{equation}\label{eq:lin-ls}
 \norm{y_m-x}\leq\alpha^{-1}\norm{Ay_m-b}
 \leq\alpha^{-1}\norm{A}\norm{P_mx-x}\longrightarrow0.
\end{equation}
For $u\in P_mH$, $\norm{T_mu}\geq(\alpha-\eta_m)\norm{u}$. Thus, eventually,
\[
 C_m\geq\tfrac14\alpha^2 I_m,\qquad
 \norm{C_m^{-1}}\leq4\alpha^{-2},\qquad
 \lambda_{\min}(C_m)>1/m.
\]
Furthermore,
\[
 B_m-C_m=E_m^*E_m,\qquad c_m-d_m=E_m^*b,
\]
so for all sufficiently large $m$, with $z_m=C_m^{-1}d_m$,
\begin{equation}\label{eq:lin-perturb}
 \norm{z_m-y_m}
 \leq4\alpha^{-2}\bigl(\eta_m\norm{b}+\eta_m^2\norm{y_m}\bigr)
 \longrightarrow0.
\end{equation}
Equations \eqref{eq:lin-ls}-\eqref{eq:lin-perturb} prove $\Gamma_m(A,b)\to A^{-1}b$. This proves the raw upper bound without a convergence modulus.

For the lower bound, suppose $\Xi_f$ were a general algorithm. On $(I,0)$ it queries finitely many vector coordinates. Choose $j$ not among them. The inputs $(I,0)$ and $(I,e_j)$ agree on every queried evaluation, including all matrix entries and any supplied constants $f(m)$, but their solutions are $0$ and $e_j$. Both belong to $\Omega_f$, since $D_{f,m}(I)=0$. This contradicts finite-information dependence.
\end{proof}

\subsection{The computable adaptation: remove the exact threshold}
The fact that \eqref{eq:lin-raw} is a finite-query type-G algorithm does \textit{not} make it a Type-2 computable map of Cauchy names. The exact branch test cannot simply be imported into a Weihrauch preprocessor. The following modification is the implementation step, not a quotation of the source estimator.

\begin{lemma}[Source-proof adaptation supplied here: regularized finite sections]\label{lem:lin-regularized}
Set $\varepsilon_m=1/m$ and
\begin{equation}\label{eq:lin-reg}
 \widetilde\Gamma_m(A,b)=(C_m+\varepsilon_m I_m)^{-1}d_m.
\end{equation}
These maps are uniformly Type-2 computable from the finitely many indicated coordinate names and $m$, and converge in norm to $A^{-1}b$ on $\Omega_f$.
\end{lemma}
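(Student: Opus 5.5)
The proof has two parts: uniform computability of $\widetilde\Gamma_m$, and its norm convergence to $A^{-1}b$, obtained by comparing with the thresholded analysis already carried out for \Cref{prop:lin-raw}.

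\textbf{Computability.} The matrix $C_m=T_m^*T_m$ is a finite sum of products of the first $f(m)$ rows and $m$ columns of $A$. The vector $d_m$ is built the same way from those entries and the first $f(m)$ coordinates of $b$. Hence both are computed from finitely many Cauchy names, uniformly in $m$, because $f$ is computable. The matrix $C_m+\varepsilon_mI_m$ is Hermitian with $C_m+\varepsilon_mI_m\geq\varepsilon_mI_m$. So it is invertible with $\norm{(C_m+\varepsilon_mI_m)^{-1}}\leq m$, and this bound is known in advance with no branch test. I would compute the solution by Gaussian elimination with exact real arithmetic on Cauchy names, or more safely by a Neumann-type iteration with an a priori error bound. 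The key point is that a known positive lower bound on the least eigenvalue makes the map $(C,d)\mapsto(C+\varepsilon I)^{-1}d$ computable and locally Lipschitz on the set where $C\geq0$. We can therefore output a $\delta_H$-name by zero extension, with finitely supported rational approximations to precision $2^{-k}$. The proof should stress that this is exactly where the exact test in \eqref{eq:lin-raw} disappears.

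\textbf{Convergence.} Fix $(A,b)\in\Omega_f$ and keep the notation $\alpha,\eta_m,z_m,y_m$ from the proof of \Cref{prop:lin-raw}. By \eqref{eq:lin-ls} and \eqref{eq:lin-perturb}, $z_m=C_m^{-1}d_m\to x$. It therefore suffices to show $\norm{\widetilde\Gamma_m(A,b)-z_m}\to0$. For large $m$ we have $C_m\geq\frac14\alpha^2I_m$, and the resolvent identity gives
\[
 \widetilde\Gamma_m-z_m=-\varepsilon_m(C_m+\varepsilon_mI_m)^{-1}C_m^{-1}d_m .
\]
Hence
\[
 \norm{\widetilde\Gamma_m-z_m}\leq\varepsilon_m\cdot4\alpha^{-2}\norm{z_m}.
\]
Here $\norm{z_m}$ is bounded because $z_m\to x$, and $\varepsilon_m=1/m\to0$. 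This yields convergence without any modulus.

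\textbf{Main obstacle.} I expect the main care to be needed in the computability step rather than the convergence. One must make sure the finite-dimensional solve is genuinely uniform in $m$ and in the names, with no hidden case distinction on singularity. This is settled by the a priori bound $\norm{(C_m+\varepsilon_mI_m)^{-1}}\leq m$. With that bound, perturbation estimates translate input precision into output precision effectively. Concretely, to get output error $2^{-k}$ we can request input precision of order $2^{-k}$ divided by a polynomial in $m$ and in computable bounds on the entry sizes read from the names.
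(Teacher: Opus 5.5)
Your proposal is correct and essentially matches the paper. The a priori bound $C_m+\varepsilon_mI_m\geq\varepsilon_mI_m$ is what removes the branch test; the paper phrases the solve via Cramer's rule with determinant at least $\varepsilon_m^m$ rather than elimination, iteration or a Lipschitz perturbation estimate, and convergence rests on the same eventual coercivity bound $\norm{(C_m+\varepsilon_mI_m)^{-1}}\leq4\alpha^{-2}$. The only real difference is bookkeeping: you compare $\widetilde\Gamma_m$ with $z_m$ through the resolvent identity and inherit $z_m\to x$ from the raw proof, whereas the paper subtracts the equations for $\widetilde\Gamma_m$ and $y_m$ directly to get $\norm{\widetilde\Gamma_m-y_m}\leq4\alpha^{-2}\bigl(\eta_m\norm{b}+(\eta_m^2+\varepsilon_m)\norm{y_m}\bigr)$.
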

\begin{proof}
For every finite input matrix, $C_m+\varepsilon_m I_m\geq\varepsilon_m I_m$. Its inverse is computable by finite-dimensional arithmetic and division by a nonzero determinant; for example the positive determinant is bounded below by $\varepsilon_m^m$. Complex conjugation and all matrix operations involved are computable on Cauchy names. No test of equality to a threshold is performed. Uniformly compute a finitely supported rational complex vector $w_m$ with
\[
 \norm{w_m-\widetilde\Gamma_m(A,b)}\leq2^{-m}.
\]
For example, approximate each real and imaginary coordinate to error $2^{-m}/(2m)$. The computations use arbitrarily accurate names of only finitely many evaluation values, as permitted at name level.

For large $m$ the same coercivity bound as above gives
\[
 \norm{(C_m+\varepsilon_m I_m)^{-1}}\leq4\alpha^{-2}.
\]
Subtracting the equations for $\widetilde\Gamma_m$ and $y_m$, we obtain
\[
 \norm{\widetilde\Gamma_m-y_m} \leq4\alpha^{-2}\left(\eta_m\norm{b} +(\eta_m^2+\varepsilon_m)\norm{y_m}\right)\longrightarrow0.
\]
Since $y_m\to x$, both $\widetilde\Gamma_m$ and $w_m$ tend to $x=A^{-1}b$.
\end{proof}

\begin{lemma}[Norm-name compiler supplied here]\label{lem:hilbert-compiler}
From a uniformly computable convergent sequence $(w_m)$ of finitely supported rational complex vectors, one can compute a single array whose $\Lim$ is a $\delta_H$-name of its limit, without a convergence rate.
\end{lemma}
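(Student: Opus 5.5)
The plan is to copy the proof of \Cref{lem:real-limit}, replacing the absolute value of rationals by the norm on finitely supported rational complex vectors. The key observation is that for finitely supported vectors with rational complex coordinates, the quantity $\norm{w_n-w_m}^2$ is a rational number computable exactly from the codes. Hence the comparison $\norm{w_n-w_m}\leq\varepsilon_k$ is decidable: it is equivalent to the rational inequality $\norm{w_n-w_m}^2\leq\varepsilon_k^2$. This replaces the decidable rational tests used in the scalar case, and it is the only point where finite support and rationality matter.

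Concretely, put $\varepsilon_k=2^{-k-2}$ and define
\[
 N_{k,s}=\min\bigl\{n\leq s:\forall m\in\{n,\ldots,s\}\;\norm{w_n-w_m}\leq\varepsilon_k\bigr\}.
\]
The set is nonempty because $n=s$ qualifies. Let $A(s,k)$ be the index $j$ in the fixed enumeration $(v_j)$ with $v_j=w_{N_{k,s}}$; such a $j$ can be found by search. To make $A(s,k)$ a well-defined function of $N_{k,s}$ alone, take the least such $j$ under a canonical form of the vector code. All of this is computed by one machine, uniformly in $s,k$, and relative to whatever input name produces $(w_m)$.

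For the stabilization argument, note that $(w_m)$ converges in $H$, so it is norm-Cauchy. Hence some $n$ satisfies $\norm{w_n-w_m}\leq\varepsilon_k$ for all $m\geq n$; let $N_k$ be the least such $n$. Each $n<N_k$ fails at some finite $m$, and each such failure is seen once $s$ exceeds that $m$. Therefore $N_{k,s}=N_k$ for all large $s$, so column $k$ of $A$ is eventually constant. This establishes the domain condition for $\Lim$.

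For correctness, let $x=\lim_m w_m$. Letting $m\to\infty$ in $\norm{w_{N_k}-w_m}\leq\varepsilon_k$ gives $\norm{v_{\Lim(A)(k)}-x}\leq\varepsilon_k\leq2^{-k}$, which is exactly the defining condition of a $\delta_H$-name.

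The only step needing care is making the canonical index choice explicit, so that a stabilized $N_{k,s}$ yields a stabilized code, and not merely a stabilized vector with varying codes. I expect no genuine obstacle. The main conceptual point, as in \Cref{lem:real-limit}, is that we take a limit of stabilizing codes rather than of approximations, and no convergence rate is used anywhere.
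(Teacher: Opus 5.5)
Your proposal is correct and follows the paper's proof essentially line for line. It uses the same stabilizing index $N_{k,s}$ with tolerance $2^{-k-2}$, decidability of the tests via rational squared norms, and the same limit argument giving $\norm{w_{N_k}-x}\leq 2^{-k-2}$. Your extra care in choosing a canonical code is harmless. The paper simply takes $\operatorname{code}(w_{n(k,s)})$, which is already determined by the stabilized index.
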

\begin{proof}
Reindex the sequence starting at $0$. For $s,k\in\N$, let
\[
 n(k,s)=\min\{n\leq s:\forall t\in\{n,\ldots,s\}\; \norm{w_n-w_t}\leq2^{-k-2}\}.
\]
The finite tests are decidable by squaring: squared norms of rational finite vectors are rational. As in \Cref{lem:real-limit}, $n(k,s)$ stabilizes to the least $n_k$ satisfying the inequalities for every $t\geq n_k$. Consequently the array
\[
 K(p)(\langle s,k\rangle)=\operatorname{code}(w_{n(k,s)})
\]
has eventually constant columns, and their limiting vectors satisfy $\norm{w_{n_k}-x}\leq2^{-k-2}$. Here $p$ is the input name from which all $w_m$ are computed. This is one computable functional $K$, not a nonuniform family indexed by $k$ or $s$.
\end{proof}

\subsection{Exact degree, including a separate hardness construction}
\begin{theorem}[Degree identification derived here from the source scheme]\label{thm:lin-degree}
With the representations specified above,
\[
 \Solve_f\eW\Lim,\qquad
 \pure(P_f)=\rk(\Solve_f)=1.
\]
\end{theorem}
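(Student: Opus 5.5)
The proof has three parts: an upper bound $\Solve_f\lW\Lim$, a lower bound $\Lim\lW\Solve_f$, and the rank equality, which then follows from \Cref{thm:bridge} with rank exactly $1$.

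\textbf{Upper bound.} I will build a single computable preprocessor $K$ as in \eqref{eq:nf} with $k=1$, so that $\Lim\circ K\preceq\Names_{\Solve_f}$. Given a product Cauchy name $p$ of the coordinates of $(A,b)$, \Cref{lem:lin-regularized} uniformly computes the finitely supported rational vectors $w_m$ with $\norm{w_m-\widetilde\Gamma_m(A,b)}\leq2^{-m}$, and hence $w_m\to A^{-1}b$ in norm on $\Omega_f$. Feeding this sequence into the compiler of \Cref{lem:hilbert-compiler} yields one array $K(p)$ with eventually constant columns whose limit is a $\delta_H$-name of $A^{-1}b$. I must check that $K$ is defined on every name of every promised input, not just on one fixed name. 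This holds because both lemmas are uniform in the input name and need only the convergence promise, not a rate. By \eqref{eq:nf} this gives $\Solve_f\lW\Lim$.

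\textbf{Lower bound.} By \Cref{lem:plpo} it suffices to show $\widehat{\LPO}\lW\Solve_f$. Take $A=I$, which is invertible and satisfies $D_{f,m}(I)=0$, so $(I,b)\in\Omega_f$ for every $b\in H$. Given $p\in\Cantor$ viewed as an array $p(i,t)$, set
\[
 b=\sum_{i\geq1}\sum_{t} 2^{-i-t}\,p(i-1,t)\,e_i'
\]
on a suitably weighted basis. Concretely, I will take the coordinate $\langle b,e_i\rangle=\sum_t 2^{-t-1}p(i-1,t)$ multiplied by $2^{-i}$, so that $b\in\ell^2$. Each coordinate is a computable real uniformly in $p$, so a product Cauchy name of $(I,b)$ is computable from $p$. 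The solution is $x=b$, delivered as a $\delta_H$-name. From a norm-name of $x$ we can compute each coordinate $\langle x,e_i\rangle$ as a real. The remaining question is whether that real is $0$ or positive, and this is again an $\LPO$ question, so it is not directly decidable.

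To fix this I will encode the answer so that it becomes decidable. The idea is to let the magnitude jump: put $\langle b,e_i\rangle=2^{-i}$ if some $p(i-1,t)=1$ and $0$ otherwise. This value is not computable from $p$ as a Cauchy name, however. So the encoding has to use the coordinate input instead. The coordinate representation of the input is weak (Cauchy per coordinate), while the output is a norm-name, which is strong. The plan is therefore to choose $b$ with $\langle b,e_i\rangle=2^{-i}\cdot 2^{-t_i}$, where $t_i$ is the first $t$ with $p(i-1,t)=1$, and value $0$ if no such $t$ exists. This coordinate is computable from $p$ as a Cauchy real. The postprocessor then reads the output norm-name at a precision $2^{-k}$ that is finite but not bounded a priori. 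This still only gives approximations, so it does not decide the bit.

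\textbf{Main obstacle.} The hard step is the lower bound: we need a genuine discontinuity that a $\delta_H$-name resolves. I would replace $A=I$ by a diagonal operator with entries $\mu_i=2^{-t_i}$ (and $\mu_i=1$ if no $t$ exists), combined with $b$ having coordinates $\langle b,e_i\rangle=2^{-t_i}$ (and $0$ if no $t$ exists). Both are computable coordinatewise. Then $\langle x,e_i\rangle=1$ if $\exists t\,p(i-1,t)=1$ and $0$ otherwise, which is an exact bit, and a $2^{-2}$-approximation of $x$ from the norm-name reads off every bit simultaneously. The difficulty is the promise conditions: $A$ must have a bounded inverse and $x$ must lie in $\ell^2$. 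Neither holds as stated, since $\mu_i$ can be small and $x$ can have infinitely many $1$s. I would repair this with weights: take $\mu_i=2^{-t_i}$ but confine the smallness to a $2\times2$ block coupling $e_{2i-1},e_{2i}$, and scale $b$ by $2^{-i}$. The block should be uniformly invertible, yet its solution component should be $2^{-i}$ versus $0$ depending on the bit, which is still readable at precision $2^{-i-2}$. Verifying uniform invertibility and the $f$-dispersion bound (the blocks are local, and $f(m)\geq m$ helps) completes $\widehat{\LPO}\lW\Solve_f$. Noncomputability from \Cref{lem:plpo} then gives $\rk(\Solve_f)=1$, and \eqref{eq:bridge} gives $\pure(P_f)=1$.
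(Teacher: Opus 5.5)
Your upper bound is correct and is the paper's argument: the regularized finite sections of \Cref{lem:lin-regularized}, fed into the compiler of \Cref{lem:hilbert-compiler}, give the single preprocessor $K$. The gap is the lower bound $\widehat{\LPO}\lW\Solve_f$. You never construct it, and the route you sketch cannot be completed.

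First, your diagonal entries $\mu_i$ (equal to $2^{-t_i}$, or to $1$ if row $i-1$ has no $1$) are not computable from $p$. A finite prefix of row $i-1$ containing no $1$ is compatible both with $\mu_i=1$ and with $\mu_i\leq 2^{-k}$.

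More fundamentally, the block-diagonal repair with finite (e.g.\ $2\times2$) blocks fails by continuity, whatever invertibility and dispersion checks you add. The block entries and the entries of $b$ are produced as Cauchy reals by a computable functional of $p$, so they are continuous functions on $\Cantor$. Inversion of invertible finite matrices is continuous. Hence every solution coordinate, and every finite group of them, is a continuous function of $p$. A coordinate equal to $2^{-i}$ or $0$ according to $\exists t\,p(i-1,t)=1$ would be discontinuous at every $p$ whose row $i-1$ is identically zero. Your first encoding, $2^{-i}2^{-t_i}$ in a single coordinate, fails for the same reason: the information shrinks instead of staying visible.

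The missing idea is the observation you made and then did not use. The input is only coordinatewise, while the output is a norm name. So the discontinuity must come from mass escaping to infinity in the coordinate index, not from a jump in any individual coordinate.

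The paper keeps $A=I$, so $(I,b)\in\Omega_f$ trivially. It puts the value $2^{-(i+1)}$ at the coordinate $\langle i,t\rangle_+$, where $t$ is the first position with $p(i,t)=1$, and $0$ at every other coordinate. Each coordinate of $b$ is then determined by finitely many bits of $p$. Yet the norm of the row-$i$ part of $b$ is exactly $2^{-(i+1)}\beta_i$, however late the first $1$ appears.

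From the returned $\delta_H$-name one computes $\norm{b}^2=\sum_i 4^{-(i+1)}\beta_i$ and extracts the bits as base-$4$ digits separated by gaps. Equivalently, take a finitely supported approximation within $2^{-i-3}$ and compare the norm of its row-$i$ projection with $2^{-i-2}$. This gives even $\widehat{\LPO}\sW\Solve_f$, after which \Cref{lem:plpo} and \Cref{thm:bridge} finish the proof as you planned.
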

\begin{proof}
\Cref{lem:lin-regularized} and \Cref{lem:hilbert-compiler} give a computable $K$ with
\[
 \Lim\circ K\preceq\Names_{\Solve_f}.
\]
This proves the pure one-limit upper bound, including its domain condition.

For the converse, reduce $\widehat{\LPO}$ to $\Solve_f$. Given a binary array $p(i,t)$, let $\beta_i=1$ exactly when $\exists t\;p(i,t)=1$. Fix a computable bijection $(i,t)\mapsto\langle i,t\rangle_+$ from $\N^2$ to $\N_{\geq1}$ and define $A=I$ and
\[
 b_{\langle i,t\rangle_+}=
 \begin{cases}
 2^{-(i+1)},&p(i,t)=1\text{ and }p(i,u)=0\text{ for every }u<t,\\
 0,&\text{otherwise}.
 \end{cases}
\]
Each coordinate is computed from finitely many input bits. At most one coordinate in each row $i$ is nonzero, whence
\begin{equation}\label{eq:lin-encode}
 \norm{b}^2=\sum_{i=0}^{\infty}4^{-(i+1)}\beta_i\leq\frac13.
\end{equation}
Thus $(I,b)$ is a valid instance and its product coordinate name is computable uniformly in $p$. The location of a first $1$ need not be known in advance.

The returned norm-Cauchy name of $I^{-1}b=b$ computes the real $z=\norm{b}^2$. On the promised set of reals in \eqref{eq:lin-encode}, the first base-$4$ digit is computable: digit $0$ implies $z\leq1/12$, while digit $1$ implies $z\geq1/4$. Approximation eventually distinguishes these alternatives using the separating threshold $1/6$. Output this digit, replace $z$ by $4z-\beta_0$, and iterate. The same promise holds for every remainder. This computes the entire sequence $(\beta_i)$ using only the output name, so actually
\[
 \widehat{\LPO}\sW\Solve_f.
\]
\Cref{lem:plpo} now gives $\Lim\lW\Solve_f$. Noncomputability of $\Lim$ proves the rank lower bound, and \Cref{thm:bridge} gives the equality of pure height and rank.
\end{proof}

We record shortly on how P1-P5 are implemented here explicitly:\\
P1 fixes the product coordinate names and norm-Cauchy output; P2 prohibits treating complex coordinates as comparison atoms. P3 is verified by the regularized finite-dimensional computation, not by the discontinuous threshold in the raw source estimator. On the input domain with its subspace product topology, each $\widetilde\Gamma_m$ is a continuous fixed-query map; thus this also gives $\operatorname{SCI}_{\mathrm G,\mathrm{Cont}}(P_f)=1$, using the raw lower bound. The single functional $K$ and the convergence proof supply the stronger, effective name-level uniformity. P4 is ordinary computable composition and data handling; P5 is \eqref{eq:nf}. The source proof yields the approximation mechanism; the implementation is adapted here, and the independent coding reduction yields exact degree completeness.

\section{Literature example 2: infinitely many ones}\label{sec:inf}

The second example is from \cite[Theorem 7.1, the $\Xi_2$ assertion; proof in Section 14, Step I]{BAHNS}, where the authors prove raw SCI~$2$ for deciding whether a binary sequence contains infinitely many nonzero entries. This is a non-spectral decision problem in its own right, even though it is also useful for spectral lower bounds. 

\subsection{Problem and checked raw classification}
Let
\[
 \Omega=\Cantor,\qquad M=2\text{ with the discrete metric},\qquad
 \Lambda=\{\lambda_j:j\in\N\},\quad\lambda_j(a)=a(j),
\]
and set
\[
 \INF(a)=
 \begin{cases}
 1,&\forall N\;\exists j\geq N\quad a(j)=1,\\
 0,&\text{otherwise}.
 \end{cases}
\]
All bits together determine $a$, so consistency is immediate. The evaluation image with its subspace product Cauchy representation is computably isomorphic to Cantor space: a Cauchy name of a promised $0/1$ value allows one to distinguish it from the other value by approximating to error less than $1/4$, and the reverse conversion is computable.

\begin{proposition}[Source reconstruction: {\cite[Theorem 7.1, $\Xi_2$]{BAHNS}}]\label{prop:inf-raw}
$\SCIG(P_{\INF})=2$.
\end{proposition}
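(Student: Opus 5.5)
The proof splits into an upper bound, a height-two tower, and a lower bound, which shows that height one is impossible. The raw setting allows arbitrary functions of finitely many exact bit values, so the only real constraint is finite-information dependence.

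\textbf{Upper bound.} I would take the deepest maps
\[
 \Gamma_{n_2,n_1}(a)=
 \begin{cases}
 1,&\exists j\in[n_2,n_2+n_1]\;a(j)=1,\\
 0,&\text{otherwise},
 \end{cases}
\]
with the fixed finite query set $\{\lambda_j:j\leq n_2+n_1\}$. This is a general algorithm, since agreement on the queried bits preserves both the output and the (input-independent) query set. For fixed $n_2$, the inner limit as $n_1\to\infty$ exists in the discrete space $2$. It equals $1$ exactly when some $j\geq n_2$ has $a(j)=1$, because the values are monotone in $n_1$ and hence eventually constant. The outer sequence $g_{n_2}(a)=[\exists j\geq n_2\;a(j)=1]$ is nonincreasing in $n_2$. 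It is constantly $1$ if $a$ has infinitely many ones, and eventually $0$ otherwise. So the outer limit exists and equals $\INF(a)$, which gives $\SCIG(P_{\INF})\leq2$.

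\textbf{Lower bound.} Height $0$ is excluded directly: $0^\infty$ and $0^N1^\infty$ agree on any finite query set but have different answers. The main step is excluding height $1$. Suppose $\Gamma_n$ are general algorithms with $\Gamma_n(a)\to\INF(a)$ for every $a$; convergence in the discrete metric means eventual equality. I would run a diagonal construction that builds a sequence $a$ as a limit of finite strings $\sigma_0\subseteq\sigma_1\subseteq\cdots$, alternating phases.
\begin{enumerate}
\item In an even phase, extend $\sigma_k$ by zeros. Since $\sigma_k0^\infty$ has finitely many ones, some $n>$ all previous indices satisfies $\Gamma_n(\sigma_k0^\infty)=0$. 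Its query set $Q_{\Gamma_n}(\sigma_k0^\infty)$ is finite, so let $\sigma_{k+1}=\sigma_k0^{L}$ cover all queried indices. Every extension of $\sigma_{k+1}$ agrees with $\sigma_k0^\infty$ on the queries, and hence also has $\Gamma_n$-value $0$.
\item In an odd phase, do the same with $\sigma_k1^\infty$, which has infinitely many ones. Find a fresh $n$ with $\Gamma_n=1$, then extend $\sigma_k$ by ones past its queries.
\end{enumerate}
The limit $a$ contains infinitely many blocks of ones, so $\INF(a)=1$. But $\Gamma_n(a)=0$ for infinitely many $n$, by the locking property of general algorithms. This contradicts convergence.

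I expect the only delicate point to be this locking step. It uses the exact definition of a general algorithm, namely that agreement on $Q_\Gamma(A)$ forces the same output, together with the requirement that the chosen indices $n$ strictly increase. Both are straightforward here. Combining the two bounds gives $\SCIG(P_{\INF})=2$.
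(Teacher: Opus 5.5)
Your proof is correct and follows essentially the paper's route. The upper bound uses a two-index tower whose inner limit is an existential question and whose outer limit is monotone: you look for a one in the window $[n_2,n_2+n_1]$, where the paper counts more than $m$ ones among the first $n+1$ bits. The lower bound is a diagonal construction that uses the finite query sets to lock $\Gamma_n(a)=0$ at infinitely many increasing indices while inserting infinitely many ones. Your odd phases are not needed; the paper simply places one fresh $1$ beyond all previous queries. The only detail to tighten is that each odd-phase extension must append at least one $1$, so that $a$ really is an infinite sequence with infinitely many ones. Alternatively, observe that the odd phases already lock $\Gamma_n(a)=1$ infinitely often, so $(\Gamma_n(a))_n$ oscillates and cannot converge whatever $\INF(a)$ is.
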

\begin{proof}
Define
\begin{equation}\label{eq:inf-tower}
 \Gamma_{m,n}(a)=\mathbf1_{\{\sum_{j=0}^{n}a(j)>m\}}.
\end{equation}
Its fixed query set is $\{\lambda_0,\ldots,\lambda_n\}$. For fixed $m$, the inner sequence is eventually constant and
\[
 \Gamma_m(a):=\lim_n\Gamma_{m,n}(a) =\mathbf1_{\{a\text{ has more than }m\text{ ones}\}}.
\]
If $a$ has finitely many ones, then $\Gamma_m(a)=0$ for all sufficiently large $m$; if it has infinitely many, $\Gamma_m(a)=1$ for all $m$. Hence
\[
 \INF(a)=\lim_m\lim_n\Gamma_{m,n}(a),
\]
which proves the upper bound. It is important that the intermediate maps $\Gamma_m$ are not required to be finite-query general algorithms.

Suppose, for a contradiction, that $(\Delta_n)_n$ is a height-one raw tower computing $\INF$. Inductively choose strictly increasing $n_k$ and distinct, increasing positions $i_k$. At stage $k$, let $a^{(k)}$ have ones exactly at the previously chosen positions $i_0,\ldots,i_{k-1}$. It has finitely many ones, so choose $n_k>n_{k-1}$ with $\Delta_{n_k}(a^{(k)})=0$. Let $S_k$ be the finite set of positions queried by this algorithm on $a^{(k)}$. Choose $i_k$ larger than all previously chosen positions and all positions in $S_0\cup\cdots\cup S_k$.

Let $a$ have ones exactly at the positions $i_k$, for all $k$. For each $k$, the final $a$ agrees with $a^{(k)}$ on $S_k$: every newly added one lies beyond every index in $S_k$. Finite-information dependence therefore gives
\[
 \Delta_{n_k}(a)=\Delta_{n_k}(a^{(k)})=0.
\]
But $a$ has infinitely many ones, so convergence in the discrete target to $\INF(a)=1$ is impossible. This rules out height~$1$ and, by padding, height~$0$.
\end{proof}

The source proof does not require an effective construction of the adversarial sequence. The point is that it works against every raw general tower, including noncomputable ones. In contrast, the upper family \eqref{eq:inf-tower} is uniformly computable from the input bits and both indices.

\subsection{Exact Weihrauch degree: the derivative of LPO}
An equivalent concrete presentation of $\LPO'$ has input a binary array $p_s(j)$, promised eventually constant in $s$ for each $j$, and output
\[
 \LPO'(p)=\LPO(u),\qquad u(j)=\lim_s p_s(j).
\]
This is equivalent to the literal jumped-representation definition: intermediate natural-valued approximations can be replaced by their indicators of equality to $1$, since their final values are promised to be in $2$.

\begin{theorem}[Exact degree and rank distinctions derived here]\label{thm:inf-degree}
\[
 \INF\esW\LPO',\qquad
 \pure(P_{\INF})=\rk(\INF)=2.
\]
Moreover,
\begin{equation}\label{eq:inf-strict}
 \INF\LW\Lim^{(2)},\qquad
 \INF\nW\Lim,\qquad \Lim\nW\INF.
\end{equation}
\end{theorem}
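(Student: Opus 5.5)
The proof splits into the strong equivalence, the rank computation, and the three separations in \eqref{eq:inf-strict}.

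\textbf{Strong equivalence.} For $\INF\sW\LPO'$, take an input $a\in\Cantor$ and build the array $p_s(m)=\mathbf1_{\{\sum_{j\le s}a(j)>m\}}$, which is the inner stage of \eqref{eq:inf-tower}. Each column stabilizes to $u(m)=\mathbf1_{\{a\text{ has more than }m\text{ ones}\}}$. Since $u$ is monotone, $\LPO(u)=1$ exactly when $u(m)=1$ for every $m$, which is not what we want. So use instead the column $p_s(m)=1-(\text{that indicator})$, with limit $v(m)=\mathbf1_{\{a\text{ has at most }m\text{ ones}\}}$. Then $\LPO(v)=1$ iff $a$ has finitely many ones, and the postprocessor outputs $1-\LPO(v)$. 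It uses only the output, so the reduction is strong.

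For $\LPO'\sW\INF$, take an eventually constant binary array $p_s(j)$ and build $a$ as follows. Enumerate pairs $\langle j,s\rangle$, and emit a $1$ in $a$ at stage $\langle j,s\rangle$ iff $p_s(j)=1$ and $s\ge j$. If $u(j)=1$ for some $j$, then $p_s(j)=1$ for cofinitely many $s$, so $a$ has infinitely many ones. Conversely, we need the ones of $a$ to be finite when $u\equiv0$, but infinitely many $j$ may each contribute finitely many ones. To fix this, emit a $1$ at stage $\langle j,s\rangle$ only when $p_s(j)=1$ and $p_{s-1}(j)=1$ and no $j'<j$ has been counted. This still fails. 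The standard repair is a $\Sigma_2$ to $\Pi_2$ normalization: $\LPO(u)=1$ iff $\exists j\,\exists N\,\forall s\ge N\;p_s(j)=1$, so $\LPO(u)=0$ iff $\forall j\forall N\exists s\ge N\;p_s(j)=0$. Hence $\LPO(u)=0$ is a $\Pi_2$ property of $p$, and the complement of a uniformly produced $\Pi_2$ set is an $\INF$-instance after coding.

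Concretely, use a movable marker. For each $k$, let $a$ have a one at the first stage $t$ at which every $(j,N)$ with $\langle j,N\rangle\le k$ has a witness $s\ge N$ in $[0,t]$ with $p_s(j)=0$. Then $a$ has infinitely many ones iff $\LPO(u)=0$. This holds because each individual $(j,N)$ is eventually witnessed when $u(j)=0$, and some $(j,N)$ is never witnessed when $u(j)=1$. The postprocessor outputs $1-\INF(a)$. This coding step is the one I expect to need the most care: the ones must be emitted at increasing positions computably, and the case of infinitely many $j$ must be handled by this finite-prefix bookkeeping.

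\textbf{Ranks.} $\LPO\lW\Lim$ by \Cref{lem:plpo}. Applying \Cref{lem:lift} to $\LPO\sW\Lim$ (strong, since the finite disjunctions stabilize) gives $\LPO'\sW\Lim'$. Since $\Lim'\eW\Lim^{(2)}$ by the jump definition, this yields $\INF\lW\Lim^{(2)}$. For $\INF\nW\Lim$, if the reduction held, $\INF$ would be the pointwise limit of a computable, hence continuous, sequence, making it Baire class one. But the proof of \Cref{prop:inf-raw} already rules out any height-one tower, even a raw one. I would transfer this as follows: a reduction to $\Lim$ yields, via \eqref{eq:nf}, a computable $K$. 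Truncating $K$'s columns at stage $n$ and applying the postprocessor with finite use gives finite-query maps $\Delta_n$ with $\Delta_n\to\INF$, contradicting the adversary argument. Hence $\rk(\INF)=2$, and \Cref{thm:bridge} gives $\pure(P_{\INF})=2$.

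\textbf{Separations.} The strictness $\INF\LW\Lim^{(2)}$ and $\Lim\nW\INF$ both reduce to a cardinality/discreteness argument. $\INF$ has output in $2$, so for any $g$ with $\Lim\lW\INF$, every output of the reduction is $H(p,b)$ with $b\in\{0,1\}$. Consider the input $p$ given by the all-zero array together with the family of its computable perturbations. Then $H(p,0)$ or $H(p,1)$ is computable relative to $p$, so on computable inputs the output is computable. But some computable array has a noncomputable $\Lim$, the halting-stage array from \Cref{lem:plpo}. So $\Lim\nW\INF$, and therefore $\Lim^{(2)}\nW\INF$, which gives the strictness.
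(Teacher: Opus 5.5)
Your proof is correct, but it differs from the paper's at three points.

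\textbf{(i) The coding for $\LPO'\sW\INF$.} The paper uses a shorter coding into $\FIN$. It sets $w_s=\min(\{j\le s:p_s(j)=1\}\cup\{s+1\})$ and puts a one in $a$ exactly when $w_s$ changes, so $a$ has finitely many ones iff some $u(j)=1$. Your movable marker is instead the generic $\Pi^0_2$-completeness coding, which works for any uniformly $\Pi^0_2$ property. You still owe the computability check you only flagged. For $k\ge\pair{0,t+1}$ the pair $(0,t+1)$ has no witness $s\in[t+1,t]$, so deciding $a(t)$ requires testing only finitely many $k$. The same observation gives $t_k\to\infty$, hence infinitely many \emph{distinct} ones when $u\equiv0$.

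\textbf{(ii) The upper bound.} For $\INF\lW\Lim^{(2)}$ you compose $\INF\sW\LPO'$ with the lifted reduction $\LPO\sW\Lim$. The paper instead compiles the source tower \eqref{eq:inf-tower} directly into one preprocessor $K(a)(\pair{n,\pair{m,k}})=\Gamma_{m,n}(a)$. That is what makes this example a concrete bridge application rather than only a degree calculation.

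\textbf{(iii) The lower bound.} For $\INF\nW\Lim$ the paper uses Baire category on the closed sets $F^i_N$. You instead convert a normal-form certificate into a raw height-one tower and reuse the adversary of \Cref{prop:inf-raw}. This is valid because the evaluations are exact bits. The map $a\mapsto K(a)(\pair{n,0})$, projected to $2$, has finite oracle use, and agreement on the queried bits reproduces the whole run, so both general-algorithm axioms hold. The normal form has no postprocessor: since $\delta_2(q)=q(0)$, you read column $0$ directly. So drop ``applying the postprocessor with finite use''; a Weihrauch postprocessor need not even halt on truncated data. Your route shows that here a pure height bound yields a raw one, while the paper's argument is independent of the SCI lower bound.

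\textbf{Cleanup.} Your $\Lim\nW\INF$ argument is the paper's. Remove the abandoned attempts from the write-up. In particular, ``$\LPO(u)=1$ iff $u(m)=1$ for every $m$'' is false: $u$ is nonincreasing, so $\LPO(u)=u(0)$. The sentence about the all-zero array and its perturbations plays no role.
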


\begin{proof}[Two reductions supplied here, not extracted from {\cite{BAHNS}}]
Write $\FIN(a)=1-\INF(a)$. Complementing a discrete output is computable, so $\FIN\esW\INF$.

For $\FIN\sW\LPO'$, given $a$, compute
\[
 p_s(n)=\mathbf1_{\{\forall k\in\{n,\ldots,s\}\quad a(k)=0\}},
\]
where an empty range satisfies the condition. For each $n$, this sequence stabilizes to $1$ exactly when the entire tail from $n$ is zero. Thus
\[
 \LPO(\Lim(p))=1 \ \Longleftrightarrow\ 
 \exists n\;\forall k\geq n\quad a(k)=0 \ \Longleftrightarrow\ \FIN(a)=1.
\]
The preprocessor makes only finitely many bit queries for each output coordinate, and the postprocessor is the identity.

For $\LPO'\sW\FIN$, given $p_s(j)$, put
\[
 w_s=\min\bigl(\{j\leq s:p_s(j)=1\}\cup\{s+1\}\bigr)
\]
and compute a binary sequence $a$ by
\[
 a(0)=0,\qquad a(s+1)=\mathbf1_{\{w_{s+1}\neq w_s\}}.
\]
Let $u(j)=\lim_s p_s(j)$. If $u$ contains a $1$, let $j_*$ be its least position. Eventually every position below $j_*$ is $0$ and position $j_*$ is $1$, so $w_s=j_*$ permanently and $a$ has only finitely many ones. If $u$ is identically zero, then, for every fixed $J$, eventually all positions $j\leq J$ are zero, so $w_s>J$. Thus $w_s\to\infty$ and it changes infinitely often; $a$ has infinitely many ones. We have proved
\[
 \FIN(a)=\LPO'(p).
\]
Again this is a strong reduction. These two constructions, rather than the integer $2$ in the SCI theorem, identify the exact degree.
\end{proof}

\begin{proof}[Pure upper bound, rank lower bound, and strictness]
The source tower gives an explicit pure two-limit preprocessor. After decoding each promised evaluation value into a bit, set
\[
 K(a)(\pair{n,\pair{m,k}})=\Gamma_{m,n}(a)\qquad(n,m,k\in\N).
\]
The first application of $\Lim$ produces the array $(m,k)\mapsto\Gamma_m(a)$; the second produces the constant sequence with value $\INF(a)$. All inner and outer domain requirements hold by the preceding convergence proof. This constant sequence is a valid discrete output name. Hence $\INF\lW\Lim^{(2)}$.

Suppose $\INF\lW\Lim$. The normal form \eqref{eq:nf}, applied to the identity representation of Cantor space, yields continuous binary maps $g_s:\Cantor\to2$ with $g_s(a)\to\INF(a)$ for every $a$. Indeed, take the zeroth output-name coordinate of the approximating array and map any intermediate natural number other than $1$ to $0$; the eventual value is correct.

For $N\in\N$ and $i\in2$, the sets
\[
 F_N^i=\bigcap_{s\geq N}\{a:g_s(a)=i\}
\]
are closed and cover Cantor space as $(N,i)$ varies. By the Baire category theorem, one has nonempty interior. On that interior $\INF$ is constantly $i$. This is impossible: every nonempty cylinder contains a sequence with finitely many ones and one with infinitely many ones. Therefore $\INF\nW\Lim$, and the pure rank is exactly~$2$.

Finally, $\Lim\nW\INF$. A computable $\Lim$-input can have a noncomputable output, for example the halting-set characteristic sequence obtained as a limit of computable finite-stage indicators. A hypothetical reduction to $\INF$ would transform that computable input into a computable $\INF$-instance. Its answer is one bit, with a computable canonical name. Using that allowed oracle answer, the computable postprocessor would give a computable $\Lim$-output, a contradiction. Since $\Lim\lW\Lim^{(2)}$, also $\Lim^{(2)}\nW\INF$. Together with the upper bound this gives all of \eqref{eq:inf-strict}.
\end{proof}

We record again the explicit implementation of P1-P5 in this example:\\ 
P1 is the bit-evaluation product representation, computably equivalent to Cantor space, and discrete output representation. P2 causes no loss because the values are promised to be $0$ or $1$. P3 consists only of finite sums of bits, comparisons of integers, and computable indeces. The displayed $K$ supplies global uniformity, and the tower proof verifies the complete iterated-limit domain condition. P4 and P5 are the same Type-2 closure and normal form as before. The finite bit maps are also continuous in the product topology, so the regularity-restricted raw height $\operatorname{SCI}_{\mathrm G,\mathrm{Cont}}$ is $2$. The finite-injury proof verifies the source raw lower bound; the Baire-category argument is a separate represented rank lower bound; the reductions involving $\FIN$ establish the exact Weihrauch degree.

\begin{remark}[Why this example is especially diagnostic]
The equality $\pure(P_{\INF})=2$ is a real successful bridge application. Nonetheless the exact degree is not $\Lim^{(2)}$. Indeed it is even incomparable with the lower-rank degree $\Lim$. A numerical rank is monotone under reducibility but does not linearly order the degrees that receive those numerical values.
\end{remark}

\clearpage
\part{IVT in the traced finite-query transport hierarchy}\label{part:transport}

\section{What  extension is needed here}\label{sec:transport-framework}

\subsection{Source scope: transport is not already a multivalued SCI definition}
The two requested papers address a different axis from the pure-height bridge. In \cite[Definition 2.1]{SorgWitness}, the target is still a \textit{single-valued} map $\Xi:\Omega\to M$. Its Definitions 4.8 and Theorem 4.10 concern finite-query transports and preservation of raw height. Likewise, \cite[Definitions 2.1 and 5.1]{SorgTransport} uses single-valued targets, including in its typed formulation. The word ``typed'' refers to evaluation value spaces; it does not mean ``multivalued''. Its Definition 5.10 and Theorem 5.1 characterize TTE transport by a uniform finite interface trace and then forget that trace to obtain strong Weihrauch reducibility. Corollary 5.1 explicitly refutes the converse.

Consequently these works are very suitable for the \textit{transport question}, but neither can simply be cited as a multivalued version of the foundational SCI definition. We make the necessary extension below, prove the properties used, and mark this extension as supplied here. The ordinary IVT target remains \textit{one arbitrary zero}, not the entire zero set as a hyperspace-valued output.

\subsection{The multivalued raw and transport definitions}
\begin{definition}[Multivalued extension supplied here]\label{def:mv}
A multivalued typed SCI problem is $P=(F_P,\Omega_P,(M_P,d_P),\Lambda_P)$, where $F_P:\Omega_P\rightrightarrows M_P$ has nonempty values. We restrict to nonempty, countable, effectively indexed interfaces. Each $\lambda\in\Lambda_P$ has a specified value space $V_\lambda$. We impose the extensional consistency condition
\[
 \bigl[\forall\lambda\in\Lambda_P\quad\lambda(x)=\lambda(y)\bigr] \ \Longrightarrow\ F_P(x)=F_P(y).
\]
A raw tower consists of single-valued deepest general algorithms, as in \Cref{def:raw}, whose successive metric limits exist and whose final value belongs to $F_P(x)$ for every input. Its least height is denoted $\SCImv(P)$, with value $\infty$ if no finite tower exists. At height zero a general algorithm must be a selector of $F_P$.

For the represented statements below, we additionally require that the full evaluation table $\Ev_P$ separates input points. Give its image $I_{\Lambda_P}$ the subspace product representation of the evaluation-value spaces, and give $\Omega_P$ the representation transported back along the bijection $\Ev_P:\Omega_P\to I_{\Lambda_P}$. Thus the represented instance and its full information table are computably isomorphic by definition; no inverse between unrelated representations is being assumed. All examples below verify this convention explicitly. The factor relation is
\[
 \widehat F_P(\Ev_P(x))=F_P(x).
\]
For this represented problem define the pure multivalued height by
\[
 \puremv(P)=\min\{k:\exists K\in\Comp\quad \Lim^{(k)}K\preceq\Names_{\widehat F_P}\}.
\]
An empty set of admissible pure heights again has minimum $\infty$. The represented normal form in \cite[Theorem 4.27]{SorgBridge} applies to relations, so exactly the same minimum argument gives
\begin{equation}\label{eq:mv-bridge}
 \puremv(P)=\rk(\widehat F_P).
\end{equation}
\end{definition}

\begin{definition}[Multivalued finite-query transport; extension of {\cite[Definition 4.8]{SorgWitness}} and {\cite[Definitions 5.4, 5.9-5.10]{SorgTransport}}]\label{def:mv-transport}
Write $S\leq^{\mathrm{mv}}_{\mathrm{raw},\mathrm{fq}}P$ if there are a point map $E:\Omega_S\to\Omega_P$, a \textit{total continuous metric-space map} $D:M_P\to M_S$, and, for each indexed target evaluation $\lambda_e$, a finite list of source evaluations $\gamma_{i(e,1)},\ldots,\gamma_{i(e,m_e)}$ and a map $\vartheta_e$ on their transcript image, such that
\begin{align}
 \varnothing\neq D[F_P(E(x))]&\subseteq F_S(x),\label{eq:mv-sound}\\
 \lambda_e(E(x))&=\vartheta_e\bigl(\gamma_{i(e,1)}(x),\ldots, \gamma_{i(e,m_e)}(x)\bigr).\label{eq:trace}
\end{align}
Here $m_e=0$ is allowed for constants; a fixed existing source evaluation can be queried and ignored to restore the positive-query convention. The finite index list depends on $e$, not on $x$. Further, the decoder receives \textit{only the target answer}, not the source input. Inclusion in \eqref{eq:mv-sound}, rather than equality of solution sets, is the soundness condition needed for arbitrary permitted oracle outputs.

With the evaluation-induced instance representations of \Cref{def:mv}, and fixed representations of the evaluation values and outputs, write $S\leq^{\mathrm{mv}}_{\mathrm{TTE},\mathrm{fq}}P$ if in addition $E,D$ have computable realizers, and a single computable procedure, given $e$, produces the finite index list and an index for a partial computable reconstruction $\widehat\vartheta_e$ whose domain contains the whole transcript image and which satisfies \eqref{eq:trace}.

We retain metric continuity of $D$ explicitly. In our constructions $M_P=M_S=[0,1]$ with ordinary Cauchy names and $D=\id$, so this compatibility is automatic. It would not be justified for arbitrary unrelated output representations merely by writing ``TTE''.
\end{definition}

\begin{theorem}[Pullback and trace forgetting; proofs extended here from {\cite[Lemma 4.9, Theorem 4.10]{SorgWitness}} and {\cite[Theorem 5.1]{SorgTransport}}]\label{thm:mv-transport}
The relations in \Cref{def:mv-transport} are preorders, and
\begin{align}
 S\leq^{\mathrm{mv}}_{\mathrm{raw},\mathrm{fq}}P
 &\ \Longrightarrow\ \SCImv(S)\leq\SCImv(P),\label{eq:raw-pullback}\\
 S\leq^{\mathrm{mv}}_{\mathrm{TTE},\mathrm{fq}}P
 &\ \Longrightarrow\ \widehat F_S\sW\widehat F_P
 \ \Longrightarrow\ \widehat F_S\lW\widehat F_P.\label{eq:forget}
\end{align}
Accordingly the second hypothesis also implies $\rk(\widehat F_S)\leq\rk(\widehat F_P)$ and $\puremv(S)\leq\puremv(P)$.
\end{theorem}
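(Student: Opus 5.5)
The plan is to prove the four claims in order: the preorder properties, the raw pullback \eqref{eq:raw-pullback}, the trace-forgetting implication \eqref{eq:forget}, and finally the rank and pure-height consequences.

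\textbf{Preorders.} Reflexivity uses $E=\id$, $D=\id$, and for each $\lambda_e$ the one-element index list $\{e\}$ with $\vartheta_e$ the identity on one coordinate. In the TTE version this list and reconstruction are produced uniformly from $e$.

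For transitivity, suppose $S\le P$ via $(E_1,D_1,\vartheta^1)$ and $P\le R$ via $(E_2,D_2,\vartheta^2)$. Take $E=E_2E_1$ and $D=D_1D_2$; the composite $D$ is again total and metrically continuous. For soundness, use monotonicity of images: $D_1D_2[F_R(E_2E_1x)]\subseteq D_1[F_P(E_1x)]\subseteq F_S(x)$. Nonemptiness holds because $F_R$ has nonempty values and $D$ is total.

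For the trace, each target evaluation $\lambda_e$ of $R$ is expressed by $\vartheta^2_e$ through finitely many $P$-evaluations. Each of those is in turn expressed by $\vartheta^1$ through finitely many $S$-evaluations. Concatenating the finite lists gives a list depending only on $e$. The composite reconstruction is defined on the whole transcript image, because $P$-transcripts of points $E_1x$ lie in the transcript image. In the TTE case everything is uniformly computable in $e$, by composing indices.

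\textbf{Raw pullback.} Let $\Gamma_{n_k,\dots,n_1}$ be a height-$k$ tower for $P$. Define $\Gamma^S_{\vec n}=D\circ\Gamma_{\vec n}\circ E$. The query set is $Q^S(x)=\bigcup_{e:\lambda_e\in Q_\Gamma(Ex)}\{\gamma_{i(e,1)},\dots,\gamma_{i(e,m_e)}\}$, padded by a dummy query if it is empty.

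The general-algorithm property holds for the following reason. If $y$ agrees with $x$ on $Q^S(x)$, then by \eqref{eq:trace} the point $Ey$ agrees with $Ex$ on $Q_\Gamma(Ex)$. Hence $\Gamma(Ey)=\Gamma(Ex)$ and $Q_\Gamma(Ey)=Q_\Gamma(Ex)$, and therefore $Q^S(y)=Q^S(x)$.

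For the limits, continuity of $D$ commutes $D$ with each successive pointwise limit, so all limits exist. The final value is $D(z)$ with $z\in F_P(Ex)$, which lies in $F_S(x)$ by \eqref{eq:mv-sound}. Height $0$ is the same argument without limits. This is the step needing most care: the query-set invariance, and the observation that intermediate limits need not be algorithms, so only the deepest level matters.

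\textbf{Forgetting the trace.} I build a strong reduction $\widehat F_S\sW\widehat F_P$. Names are evaluation tables. The preprocessor maps an $S$-table name to a $P$-table name by computing, for each $e$, the index list and $\widehat\vartheta_e$, and then applying $\widehat\vartheta_e$ to the relevant coordinates. This yields a name of $\Ev_P(Ex)$, using that the representation of $\Omega_P$ is transported from the table. The postprocessor is a realizer of $D$ and uses only the oracle answer. Correctness again follows from \eqref{eq:mv-sound}, and a strong reduction is in particular an ordinary one.

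\textbf{Consequences.} The rank inequality follows from transitivity of $\lW$: if $\widehat F_P\lW\Lim^{(k)}$ then $\widehat F_S\lW\Lim^{(k)}$. Combining this with \eqref{eq:mv-bridge} gives the inequality of pure heights.
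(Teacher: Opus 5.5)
Your proof follows the paper's closely: composition of encodings, decoders and traces for the preorders; $D\circ\Gamma\circ E$ with continuity of $D$ for the pullback; a realizer of $D$ as strong postprocessor; transitivity plus \eqref{eq:mv-bridge} for the consequences. The raw pullback, however, has a real, if easily repaired, gap. You never verify that $Q^S(x)$ is finite, and as you define it, it need not be.

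You take the union over \emph{all} indices $e$ with $\lambda_e\in Q_\Gamma(E(x))$. Nothing in \Cref{def:mv-transport} forbids a functional from occurring under infinitely many indices of the $\N$-indexed enumeration; an enumeration of a finite interface forces this. Moreover, trace lists are attached to indices $e$, not to functionals, so different indices of the same $\lambda$ may carry different lists. For example, suppose every index lists the same $\lambda$, $\lambda\circ E$ is constant, and the trace of index $e$ queries and ignores a distinct source evaluation $\gamma_e$. Then $Q^S(x)=\{\gamma_e:e\in\N\}$ is infinite, and $(D\circ\Gamma\circ E,Q^S)$ is not a general algorithm.

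The repair is exactly the device in the paper's proof. Fix for every $\lambda\in\Lambda_P$ one representative index $c_P(\lambda)$, e.g.\ the least; no computability is needed in the raw setting. Then take the union over $\lambda\in Q_\Gamma(E(x))$ of the single trace list of $c_P(\lambda)$. This set is finite because $Q_\Gamma(E(x))$ is. Your invariance argument then goes through verbatim, since the set depends only on $Q_\Gamma(E(x))$.

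For trace forgetting you take a slightly different but equally valid route. You assemble the preprocessor coordinatewise from the uniformly computed reconstructions $\widehat\vartheta_e$. The paper instead sandwiches an arbitrary realizer $G$ of $\widehat F_P$ as $\Phi_D\circ G\circ\Phi_E$, using the computable realizers of $E$ and $D$ that \Cref{def:mv-transport} already assumes, and stresses that the trace is not needed for this step. Your version shows that the uniform trace alone already yields a realizer of $E$ on evaluation tables. The paper's is shorter and makes explicit that the trace is extra structure the Weihrauch reduction forgets.
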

\begin{proof}[Reconstructed pullback, with the multivalued change explicit]
For each $\lambda\in\Lambda_P$, fix a representative index
\[
c_P(\lambda)\in\mathbb N, \qquad \lambda_{c_P(\lambda)}=\lambda.
\]
For example, take the least such index in the fixed enumeration. No computability of $c_P$ is required for this raw pullback argument. Fix also $\gamma_\circ\in\Lambda_S$.

Given a deepest general algorithm $(\Gamma,Q_\Gamma)$ for $P$, define
\[
\widetilde\Gamma(x)=D(\Gamma(E(x)))
\]
and
\[
Q^0_{\widetilde\Gamma}(x)=
\bigcup_{\lambda\in Q_\Gamma(E(x))}
\left\{ \gamma_{i(c_P(\lambda),j)}: 1\leq j\leq m_{c_P(\lambda)} \right\}.
\]
Set
\[
Q_{\widetilde\Gamma}(x)=
\begin{cases}
	Q^0_{\widetilde\Gamma}(x),
	&Q^0_{\widetilde\Gamma}(x)\neq\varnothing,\\
	\{\gamma_\circ\},
	&Q^0_{\widetilde\Gamma}(x)=\varnothing.
\end{cases}
\]

The union is finite because it is indexed by the finite set $Q_\Gamma(E(x))$, with one finite trace for each member.

Suppose that $y$ agrees with $x$ on $Q_{\widetilde\Gamma}(x)$. For every $\lambda\in Q_\Gamma(E(x))$, its chosen reconstruction gives
\[
\lambda(E(y))=\lambda(E(x)).
\]
The general-algorithm axioms for $\Gamma$ therefore imply
\[
\Gamma(E(y))=\Gamma(E(x)), \qquad
Q_\Gamma(E(y))=Q_\Gamma(E(x)).
\]
Consequently
\[
\widetilde\Gamma(y)=\widetilde\Gamma(x), \qquad
Q^0_{\widetilde\Gamma}(y)=Q^0_{\widetilde\Gamma}(x).
\]
Padding is the same at both inputs, so
\[
Q_{\widetilde\Gamma}(y)=Q_{\widetilde\Gamma}(x).
\]
Thus $(\widetilde\Gamma,Q_{\widetilde\Gamma})$ is a general algorithm.

Apply this construction at every deepest tower index. Continuity of $D$ carries each existing metric limit to its image under $D$, successively from the innermost to the outermost limit. If the original final value is $z\in F_P(E(x))$, the transported final value is $D(z)\in F_S(x)$ by \eqref{eq:mv-sound}. At height zero the same argument applies without a limit. This proves \eqref{eq:raw-pullback}.
\end{proof}
\begin{proof}[Trace forgetting, proved at the realizer level]
Use the computable evaluation-table isomorphisms of \Cref{def:mv} to view $E$ as a map between information spaces. Let $\Phi_E,\Phi_D$ be computable realizers of that map and of the output decoder, respectively. For \textit{every} realizer $G$ of $\widehat F_P$ and every source input name $p$, the name $G(\Phi_E(p))$ denotes some $z\in F_P(E(x))$, where $p$ names $x$. Therefore
\[
 \Phi_D\circ G\circ\Phi_E(p)
\]
names $D(z)\in F_S(x)$. This is a strong reduction: $\Phi_D$ has no access to $p$. The trace itself is not needed for this implication once the point realizers have been established; it is additional structure that the Weihrauch reduction forgets.

For completeness, identities have one-query identity traces. To compose two transports, compose their encodings and their decoders in the opposite order. The two solution inclusions compose. Substitute each finite trace of the second encoding into the traces of the first and concatenate the resulting finite lists. The reconstruction is the corresponding composition on transcript images. In the TTE case all lists and machine indices are computed uniformly. Metric continuity and computability of the decoders are preserved. Hence both relations are preorders. Ordinary degree and rank monotonicity now follow by transitivity and \eqref{eq:mv-bridge}.
\end{proof}

\begin{proposition}[The converse fails even for single-valued identities: {\cite[Corollary 5.1]{SorgTransport}}; proof reconstructed]\label{prop:trace-not-converse}
There are strongly Weihrauch-equivalent represented targets whose SCI problems are not finite-query reducible in one direction, even in the raw modality.
\end{proposition}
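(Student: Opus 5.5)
Construct two SCI problems with the same target space and evaluation image up to a computable recoding, such that every finite target query needs infinitely many source queries. The natural choice works on Cantor space with a coordinate interface versus a summary interface. Let $S$ have $\Omega_S=\Cantor$, $M_S=\Cantor$ with a computable metric, target $\Xi_S=\id$, and the bit evaluations $\gamma_j(a)=a(j)$. Let $P$ have the same $\Omega$, $M$ and target $\id$, but the \emph{real} evaluations
\[
 \mu_0(a)=\sum_j 3^{-j-1}2a(j),
\]
the Cantor-middle-thirds embedding, together with, say, the bits themselves. Both interfaces separate points. The represented targets are then computably equivalent: from Cauchy names of the bits (or of the ternary real) one computes the bit sequence, and conversely. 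So $\widehat F_S\esW\widehat F_P$, with preprocessor the computable conversion and identity postprocessor, and likewise in the other direction.

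For the non-transport direction I would test $S\leq^{\mathrm{mv}}_{\mathrm{raw},\mathrm{fq}}P$ via \eqref{eq:trace}. Any encoding $E$ compatible with soundness must, after composing with a continuous $D$ and the target $\id$, satisfy $D(E(a))=a$. So $E$ is injective, and $\mu_0(E(a))$ must be a function $\vartheta$ of finitely many bits $a(j)$, $j\in J$. I first have to rule out that $E$ shuffles points; injectivity alone does not prevent $\mu_0\circ E$ from being finitely determined (take $E$ to scramble). To block this, I would instead pick the target interface small: let $P$'s interface be only $\{\mu_0\}$. Then every $P$-instance is determined by one real. A trace requires $\mu_0(E(a))=\vartheta(a{\restriction}J)$, so $E(a)$ is determined by $a{\restriction}J$. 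Hence $E$ takes finitely many values, contradicting injectivity forced by $D\circ E=\id$ on the uncountable $\Cantor$. The same remains true in the raw modality, since $\vartheta$ and $E$ are arbitrary. This gives $S\not\leq^{\mathrm{mv}}_{\mathrm{raw},\mathrm{fq}}P$. By contrast, $P\leq S$ direction is irrelevant; equivalence is only claimed at the Weihrauch level.

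The main obstacle is checking that $\widehat F_P$ is still strongly equivalent to $\widehat F_S$ with the single-real interface. From a Cauchy name of $\mu_0(a)\in$ Cantor set, the bits are computable because consecutive ternary alternatives are separated by gaps of width $3^{-j-1}$. This is the same promise-gap decoding as in \Cref{thm:lin-degree}. Conversely, the real is computable from the bits. Consistency conditions hold trivially, and singleton values make the multivalued convention collapse to \Cref{def:raw}. I would finish by noting that the argument only used the cardinality clash between finitely determined encodings and the injective decoding. This is exactly the information the Weihrauch reduction forgets.
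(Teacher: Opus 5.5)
Your proof is correct and is essentially the paper's argument: identity on $\Cantor$, a bit interface against an injective ternary atom, and $D\circ E=\id$ forcing an injective $E$ that cannot coexist with a trace depending only on a fixed finite set of source bits. The one difference is that you drop the bits and use the single-atom interface $\{\mu_0\}$. That change is harmless but unnecessary, and your reason for it is mistaken: since $\mu_0$ and $E$ are both injective, $\mu_0\circ E$ is injective, so it cannot be constant on the cylinders fixed by the finitely many traced bits. This is exactly how the paper concludes while keeping $\Lambda_P=\Lambda_S\cup\{\tau\}$, and your own step ``$E(a)$ is determined by $a|_J$'' already uses this same injectivity of $\mu_0$.
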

\begin{proof}
Let $\tau(b)=\sum_{n\geq0}2b(n)3^{-(n+1)}$ on $\Cantor$. Use the same identity target $\id_{\Cantor}$, standard represented input and output, and compare the interfaces
\[
 \Lambda_S=\{b\mapsto b(n):n\in\N\},\qquad
 \Lambda_P=\Lambda_S\cup\{\tau\}.
\]
The represented targets are identical. A hypothetical transport $S\leq_{\mathrm{raw},\mathrm{fq}}P$ has $D(E(b))=b$, so $E$ is injective. The target query $\tau$ must then factor through finitely many source bits. Two distinct sequences agreeing on these bits give $\tau(E(b))=\tau(E(c))$. Injectivity of $\tau$ and then of $E$ contradicts $b\neq c$.

One can also see the raw-height obstruction directly: truncating the bit sequence proves $\SCIG(S)=1$, and changing an unqueried bit rules out height zero; the single exact query $\tau$ makes $\SCIG(P)=0$ by unrestricted inverse postprocessing. The representations induced by the two evaluation tables are computably equivalent, but their finite exact-query interfaces are not interchangeable. Thus a Weihrauch equivalence must not be promoted to a traced equivalence without proving the traces.
\end{proof}

\subsection{Ranks, transport degrees, and the correct forgetful maps}
For a family $\mathcal U$ define
\[
 \mathcal O^{\mathrm{mv}}_k(\mathcal U)=\{P\in\mathcal U:\SCImv(P)=k\},\qquad
 \mathcal D^{\mu,\mathrm{mv}}_k(\mathcal U)= \mathcal O^{\mathrm{mv}}_k(\mathcal U)/{\equiv^{\mathrm{mv}}_{\mu,\mathrm{fq}}}.
\]
This is the multivalued adaptation of \cite[Definition 7.1]{SorgTransport}. On the TTE quotient, \Cref{thm:mv-transport} gives well-defined order-preserving maps
\begin{equation}\label{eq:degree-chain}
 [P]_{\mathrm{TTE},\mathrm{fq}}
 \longmapsto[\widehat F_P]_{\mathrm{sW}}
 \longmapsto[\widehat F_P]_{\mathrm W}
 \longmapsto\rk(\widehat F_P).
\end{equation}
Separately, forgetting effectiveness sends a TTE class to its raw finite-query class, and raw height is invariant on raw classes. There is \textit{no asserted inverse} to any map in \eqref{eq:degree-chain}. In particular, transport degrees are classes of problems \textit{with interfaces}, whereas Weihrauch degrees forget those interfaces. The pure bridge equates two notions of rank; the transport comparison relates two reducibilities.

\section{IVT and interval choice: the represented ingredients}\label{sec:ivt-basics}

\subsection{The exact target is a choice problem}
Let
\[
 \mathcal D=\{h\in C([0,1],\R):h(0)<0<h(1)\},\qquad
 \IVT(h)=\{x\in[0,1]:h(x)=0\}.
\]
The domain representation $\delta_C$ gives rational polygonal approximations $P_n$ with $\norm{P_n-h}_\infty\leq2^{-n}$, and the output representation is $\rho$. Strict endpoint inequalities are a promise, not additional noncomputable advice. The opposite orientation can be reduced to this one by multiplying by $-1$ after semideciding the endpoint signs.

The target is \textit{any one zero}. It is not the full zero set in Hausdorff distance and not the leftmost zero. Those are different problems. A set such as $h^{-1}(0)$ can of course be a single point in a hyperspace, but changing the output to that set changes the computational task.

\subsection{A self-contained version of the classical IVT classification}
Let $\CC$ be closed interval choice: an input is a nonempty closed interval $J=[a,b]\subseteq[0,1]$, possibly a singleton, given by \textit{negative information}; an output is any $x\in J$, in representation $\rho$. Negative information is an enumeration of rational relative open intervals whose union is $[0,1]\setminus J$, with a padding symbol that enumerates nothing.

\begin{lemma}[Interval-name normal form: elementary proof supplied here]\label{lem:interval-name}
From a negative name of $J=[a,b]$, one can uniformly compute rational bounds
\[
 0\leq a_s\leq a\leq b\leq b_s\leq1,\qquad
 a_s\uparrow a,\quad b_s\downarrow b.
\]
Conversely, such bounds uniformly produce a negative name of $J$.
\end{lemma}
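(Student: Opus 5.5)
Both directions rest on one observation: $J=[a,b]$ is an interval, so its complement in $[0,1]$ is $[0,a)\cup(b,1]$ (one piece may be empty). Any rational relative open interval disjoint from $J$ therefore lies entirely to the left of $a$ or entirely to the right of $b$.

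\emph{From a negative name to bounds.} Let the enumerated intervals be $I_0,I_1,\ldots$ (padding symbols ignored). I take $a_s$ to be the largest right endpoint among those $I_t$, $t\leq s$, that contain the point $0$, and $a_s=0$ if there is none. Symmetrically, $b_s$ is the smallest left endpoint among the $I_t$, $t\leq s$, that contain the point $1$, and $b_s=1$ if there is none. Whether an interval contains $0$ or $1$ is decidable from its rational code, so these are uniformly computable rationals, and they are monotone in $s$.

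An interval containing $0$ misses $J$, so it lies left of $a$ and its right endpoint is at most $a$. This gives $a_s\leq a$, and likewise $b\leq b_s$.

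Convergence is the step I expect to need the most care. Fix $r<a$ with $r$ in the interior of the complement, say $0<r<a$. The compact set $[0,r]$ is contained in $[0,1]\setminus J$, so finitely many enumerated intervals cover it. The union of these intervals is connected through $0$, but it need not be a single interval. To handle this I will modify the definition: $a_s$ is the supremum of the connected component containing $0$ of the union of $I_0,\ldots,I_s$. Computing this from finitely many rational intervals is a finite combinatorial step, decidable by sorting and merging overlapping open intervals; the relative openness at $0$ must be handled carefully in that merging. The component containing $0$ misses $J$ and is an interval, so its supremum is at most $a$. Once the finite cover of $[0,r]$ has appeared, the component contains $[0,r]$, so $a_s\geq r$. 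Hence $a_s\uparrow a$. The same argument with components at $1$ gives $b_s\downarrow b$. If $a=0$, then $a_s=0$ throughout, which is fine.

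\emph{From bounds to a negative name.} At stage $s$, enumerate the relative open intervals $[0,a_s)$ and $(b_s,1]$, using padding when an interval is empty, i.e. when $a_s=0$ or $b_s=1$. Each enumerated interval misses $J$ because $a_s\leq a$ and $b\leq b_s$. Their union is $[0,\sup_s a_s)\cup(\inf_s b_s,1]=[0,a)\cup(b,1]$, which is exactly the complement. All of this is uniform in the name, so both conversions are computable functionals.
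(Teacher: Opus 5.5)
Your proof is correct and is essentially the paper's argument. The paper dovetails, over rationals $r$, the decidable test of whether $[0,r]$ (resp.\ $[r,1]$) is covered by finitely many enumerated intervals and keeps the largest certified bound. Your supremum of the component of $0$ (resp.\ $1$) in the finite union $I_0\cup\cdots\cup I_s$ computes that same bound in one step, and both arguments get convergence from compactness of $[0,r]$ for $r<a$.

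The converse, enumerating $[0,a_s)$ and $(b_s,1]$, is identical to the paper's. You were also right to drop your first definition (the largest right endpoint of a single interval containing $0$), which need not converge to $a$.
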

\begin{proof}
For rational $r\in[0,1]$, the statement $[0,r]\cap J=\varnothing$ is semidecidable by searching for a finite subcover of $[0,r]$ from the enumerated complement. Finite coverage by rational open intervals is decidable. Dovetail these tests, retaining the largest certified lower bound together with $0$. Do the analogous construction for $[r,1]$ and retain the smallest certified upper bound together with $1$. If $r<a$, the first test eventually succeeds, and if $r>b$, the second does. The resulting bounds converge to the endpoints, including the cases $a=0$ or $b=1$. Conversely, enumerate $[0,a_s)$ and $(b_s,1]$ as relative open sets. Their unions are exactly the two components of the complement.
\end{proof}

\begin{theorem}[Classical degree: {\cite[Theorem 6.2]{BG}}; proof provenance below]\label{thm:ivt-cc}
\[
 \IVT\esW\CC,\qquad \one\LW\CC\LW\Lim.
\]
The equivalence is the result of Brattka and Gherardi \cite[Theorem 6.2]{BG}; their notation for interval choice is $C_I$.
\end{theorem}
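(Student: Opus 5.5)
I would prove the four claims separately: $\IVT\sW\CC$, $\CC\sW\IVT$, $\one\LW\CC$, and $\CC\LW\Lim$. Every reduction will go through the endpoint normal form of \Cref{lem:interval-name}.

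\textbf{$\CC\sW\IVT$.} Given a negative name of $J=[a,b]$, I first apply \Cref{lem:interval-name} to obtain rational bounds $a_s\uparrow a$ and $b_s\downarrow b$. From these I build the function
\[
 h(x)=\min\bigl(x-a,0\bigr)+\max\bigl(x-b,0\bigr).
\]
Then $h<0$ on $[0,a)$, $h=0$ exactly on $J$, and $h>0$ on $(b,1]$. The function is computable from lower approximations of $a$ and upper approximations of $b$, because $x-a$ and $x-b$ are then approximated from above and below in the right directions. I will check that uniform $2^{-n}$ approximations can be computed. The sup-norm error of replacing $a,b$ by $a_s,b_s$ is at most $\max(a-a_s,\,b_s-b)$, but that error is not known. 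To handle this, I use an alternative: $h(x)=\lim$ would fail, so instead I will define
\[
 h(x)=\int\!\ldots
\]
No. The cleaner route is a rate-independent construction. For each $s$, the function $h_s$ built from $a_s,b_s$ satisfies $h_s\geq h$ on the left part and $h_s\leq h$ on the right part, and these are monotone sequences converging pointwise. Pointwise monotone convergence gives uniform convergence by Dini's theorem, but not computably. So I would follow Brattka--Gherardi and use the Dini issue as the \emph{main obstacle}.

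The fix I would try first is to work directly from the negative information. For each enumerated complement interval $I_k$ (a relative open rational interval), I take a computable tent function $t_k\geq0$ supported exactly on $I_k$, with sign $-1$ if $I_k$ lies left of a current certified point of $J$ and $+1$ if it lies right, and set $h=\sum_k 2^{-k}\sigma_k t_k$. The difficulty is that the sign $\sigma_k$ is not known when $J$ is only given negatively. My resolution is to enumerate only intervals of the forms $[0,a_s)$ and $(b_s,1]$, as in the converse of \Cref{lem:interval-name}. Their signs are then known: negative for the left pieces and positive for the right pieces. This yields
\[
 h(x)=-\sum_s 2^{-s}\,\max(a_s-x,0)+\sum_s 2^{-s}\,\max(x-b_s,0),
\]
which is computable with a rate because the tails are bounded by $2^{-s}$. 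This $h$ vanishes exactly on $J$, after handling the endpoints $a=0$ or $b=1$ with an added small linear term outside $J$. The one remaining defect is that $h(0)<0$ fails when $a=0$. I would fix this by rescaling $J$ into $[1/3,2/3]$ and then mapping the zero back with a computable postprocessor. That postprocessor uses only the output, so the reduction is strong.

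\textbf{$\IVT\sW\CC$.} Given $h\in\mathcal D$, I will compute a negative name of a \emph{nonempty closed interval of zeros}. I would use the standard trisection/bisection with overlapping tests. At each stage, on the current interval $[c,d]$ with $h(c)<0<h(d)$, I evaluate $h$ at $c+(d-c)/3$ and $c+2(d-c)/3$ in parallel, and pick a subinterval of length $2/3$ once some sign is certified. If no sign ever gets certified, the two points are both zeros, so the middle third $[c+(d-c)/3,\,c+2(d-c)/3]$ consists of zeros in the relevant sense. To make this precise, I enumerate the complement of a nested intersection $\bigcap[c_n,d_n]$. The limit is either a point with $h=0$ by continuity, or a stalled interval whose endpoints are zeros; in the stalled case I output the singleton at one stalled endpoint. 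The tricky part is making the output a single negative name without knowing whether the process stalls. I would handle this by always enumerating the complement of $[c_n,d_n]$ together with the outer thirds, shrinking toward the midpoint of the current search as dovetailing proceeds. Every point of the resulting $J$ is then a zero. I expect this bookkeeping to be the technical heart of the proof.

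\textbf{The strictness claims.} $\one\LW\CC$ holds because $\CC$ has computable instances without computable points. A computable name of a singleton $\{a\}$ with $a$ only left-c.e.\ works: take $a$ with $a_s\uparrow a$ computable and $b_s=1$ replaced by $b_s\downarrow a$ from another computable sequence. Simpler still, use a co-c.e.\ interval avoiding all computable reals, via a Specker-style construction. For $\CC\lW\Lim$, I take $a_s$ and output $\Lim$ of the codes of $a_{s}$ compiled through \Cref{lem:real-limit}. Strictness $\Lim\nW\CC$ follows because $\CC$ always has a solution computable relative to a low, or even any PA-type, oracle, while $\Lim$ computes the halting set from a computable input. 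Concretely, a computable instance of $\CC$ has a left-c.e.\ solution $a$, and a left-c.e.\ real cannot compute $\emptyset'$ uniformly in the way $\Lim$ would demand. I would cite \cite{BG} here rather than reprove it.
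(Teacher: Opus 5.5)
Your final $\CC\sW\IVT$ construction (hinge series, contraction into $[1/3,2/3]$, output-only affine decoder) is the paper's, and $\CC\sW\Lim$ via the real-limit compiler is also fine. The genuine gap is in $\IVT\sW\CC$. If only the two trisection points $m_1,m_2$ are tested, a stall ($h(m_1)=h(m_2)=0$) tells you nothing about the rest of $[c,d]$. For example, take $h$ with $h(1/3)=h(2/3)=0$ and $h>0$ on $(1/3,2/3)$: the middle third is not a set of zeros, and shrinking toward its midpoint names $\{1/2\}$ with $h(1/2)>0$. Moreover, a stall is a $\Pi^0_1$ event and a negative name can never re-admit a point. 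Anything you exclude in anticipation of a stall can therefore conflict with a certificate that arrives later (e.g.\ $h(m_2)>0$, which forces the bracket $[c,m_2]$).

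The missing idea is to drop fixed test points. Dovetail the semidecisions $h(r)<0$ and $h(r)>0$ for \emph{all} rationals $r$ and process the certificates fairly. Whenever a negative (positive) certificate concerns an $r$ strictly inside the current bracket, move the left (right) endpoint to $r$. The nested brackets give a negative name of their intersection $[a,b]$, and the endpoint signs are invariant, so a degenerate limit is a zero by continuity. If $a<b$, every rational in $(a,b)$ is a zero, since a nonzero value would eventually be certified and cut the bracket; by continuity $[a,b]\subseteq h^{-1}(0)$. No stall detection is needed, and the postprocessor is the identity.

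Your justification of $\one\LW\CC$ rests on a false claim: every computable instance of $\CC$ has a computable solution. A nondegenerate interval contains a rational. For a computably named singleton $\{a\}$, \Cref{lem:interval-name} gives computable $a_s\uparrow a$ and $b_s\downarrow a$, so $a$ is computable. Your own example, with computable sequences from both sides, is exactly this case. Specker-style constructions yield nonempty co-c.e.\ closed sets without computable points, but never intervals.

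Strictness has to come from non-uniformity instead. A computable realizer given the all-padding name of $[0,1]$ commits to a $2^{-3}$-approximation after reading a finite prefix, and that prefix extends both to a name of $\{0\}$ and to a name of $\{1\}$. The true fact that computable instances have computable solutions is also the paper's short proof of $\Lim\nW\CC$, since a reduction must work for every realizer, including one returning that computable solution. Your low-basis sentence is a valid alternative. The ``concrete'' version is wrong, however: left-c.e.\ reals such as $\sum_{n\in\emptyset'}2^{-(n+1)}$ compute $\emptyset'$.
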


\begin{proof}[Adaptation of the signed-bracket argument in {\cite[Theorem 6.2]{BG}}]
Begin with the rational bracket $[a_0,b_0]=[0,1]$, whose endpoint values have the promised strict signs. Dovetail the semidecision procedures for $h(r)<0$ and $h(r)>0$ at all rational $r\in[0,1]$. Process their successful certificates fairly. Whenever a certificate concerns a rational $r$ strictly inside the current bracket, replace the left endpoint by $r$ if $h(r)<0$, or the right endpoint by $r$ if $h(r)>0$. Otherwise keep the bracket unchanged. Use finite bounded simulations per stage, so every stage of this nested-bracket construction is computable.

Let $J=\bigcap_s[a_s,b_s]=[a,b]$. The endpoint signs are invariant. If $a=b$, continuity and the two endpoint sequences imply $h(a)=0$. If $a<b$, every rational $r\in(a,b)$ belongs to the interior of every bracket. Were $h(r)\neq0$, its certificate would eventually be processed, replacing an endpoint by $r$ and contradicting $r\in(a,b)$. Thus $h$ vanishes on all rationals in $(a,b)$ and, by continuity, on all of $[a,b]$. Consequently
\[
 \varnothing\neq J\subseteq h^{-1}(0).
\]
The nested rational brackets compute a negative name of $J$. Every choice from $J$ is already a zero of $h$, so the postprocessor is the identity. This reduction uses only rational point values of $h$, not a computable modulus.
\end{proof}

\begin{proof}[Hinge-series reverse reduction supplied here]
Obtain the bounds of \Cref{lem:interval-name}. Move the interval into the interior by setting
\[
 A_s=\tfrac13+\tfrac13 a_s,\qquad B_s=\tfrac13+\tfrac13 b_s,
\]
with limits $A=(1+a)/3$ and $B=(1+b)/3$. Define
\begin{equation}\label{eq:interval-to-ivt}
 h(x)=\sum_{s=0}^{\infty}2^{-(s+1)} \bigl(\pos{x-B_s}-\pos{A_s-x}\bigr),
 \qquad \pos{u}=\max\{u,0\}.
\end{equation}
The summands are rational piecewise linear and bounded in absolute value by $2^{-(s+1)}$, so partial sums give a uniform polygonal name with an effective geometric error bound. If $x\in[A,B]$, all terms vanish. If $x<A$, no positive term occurs and at least one negative term is strictly negative; if $x>B$, the symmetric assertion holds. Therefore
\[
 h^{-1}(0)=[A,B],\qquad h(0)<0<h(1).
\]
Given any output $x$ of IVT, return $3x-1\in[a,b]$. This is again a strong reduction.
\end{proof}

\begin{proof}[Supplementary proofs supplied here: noncomputability and strictness]
An enumeration containing only padding symbols is a negative name of $J=[0,1]$. If interval choice had a computable realizer, one of its output rational approximations at precision, say, $2^{-3}$ would be produced after finitely many input queries. Preserve those padding answers and extend the name first to a name of $\{0\}$ and, separately, to a name of $\{1\}$. The same rational would have to approximate both $0$ and $1$ within $1/8$, which is impossible. Thus $\CC$ is noncomputable.

To obtain $\CC\sW\Lim$, compute $a_s\uparrow a$ by \Cref{lem:interval-name}, apply \Cref{lem:real-limit}, and return the left endpoint $a\in J$.

Every computable interval-choice instance has at least one computable solution: a nondegenerate interval contains a rational, while for a singleton the computable bounds $a_s,b_s$ converge to the same point, and searching for $b_s-a_s<2^{-k}$ computes that point. This is a nonuniform existence statement, not a choice algorithm. A computable $\Lim$-instance can have a unique noncomputable output. If $\Lim\lW\CC$, a computable preprocessing of that instance would have a computable interval-choice solution, and the postprocessor applied to that solution would make the $\Lim$-output computable. This contradiction proves $\Lim\nW\CC$.
\end{proof}

\subsection{Why the jumped IVT degree has pure rank two}

\begin{proposition}[Jump consequences derived here; standard ingredients {\cite{BGM}}]\label{prop:ivt-jump}
\[
 \IVT'\esW\CC',\qquad
 \one\LW\CC\LW\CC',\qquad
 \rk(\CC)=1,\quad\rk(\CC')=2.
\]
\end{proposition}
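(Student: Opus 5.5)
The plan has four parts: the equivalence, the strict chain, the two upper bounds, and the two lower bounds.

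\textbf{The equivalence.} We obtain $\IVT'\esW\CC'$ directly from \Cref{thm:ivt-cc} and the strong monotonicity in \Cref{lem:lift}. Both reductions $\IVT\sW\CC$ and $\CC\sW\IVT$ are strong, so jumping gives $\IVT'\sW\CC'$ and $\CC'\sW\IVT'$. No new construction is needed.

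\textbf{The chain.} We already have $\one\LW\CC$ from \Cref{thm:ivt-cc}. The reduction $\CC\sW\CC'$ is witnessed by the computable map that turns a name $p$ into the constant array $p_s=p$, whose $\Lim$ is $p$; the postprocessor is the identity. Strictness $\CC'\nW\CC$ will follow from the rank computation below, since $\rk(\CC)=1<2=\rk(\CC')$ and the rank is monotone.

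\textbf{Upper bounds.} For $\rk(\CC)=1$, the upper bound is $\CC\sW\Lim$ from the supplementary proof, and the lower bound is noncomputability of $\CC$, which excludes $\CC\lW\Lim^{(0)}=\id$. For $\rk(\CC')\leq 2$, apply \Cref{lem:lift} to $\CC\sW\Lim$ to get $\CC'\sW\Lim'$. Here $\Lim'$ is $\Lim$ with input jumped, so $\Lim'=\Lim\circ\Lim=\Lim^{(2)}$ on names. Alternatively, one can chain normal forms: compose the jump lift of the computable preprocessor with $\Lim$.

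\textbf{Lower bound $\CC'\nW\Lim$ (main obstacle).} I would reduce $\LPO'$ (equivalently $\INF$) to $\CC'$ and then use \Cref{thm:inf-degree}, which gives $\INF\nW\Lim$. Given an array $p_s(j)$ whose columns stabilize to $u$, build at stage $s$ a negative-information approximation of an interval $J_s$. The limiting object must be a $\CC$-name of an interval $J$ such that every point of $J$ decides $\LPO(u)$.

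\emph{First idea.} Put $J\subseteq[0,1/3]$ if $u$ contains a $1$ and $J\subseteq[2/3,1]$ otherwise. This cannot work directly, because the limit interval must be formed from limits of names.

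\emph{Simpler route.} Use $\CC\eW\IVT$ and the known fact that $\LPO\lW\CC$ fails, so a direct jump of such a reduction is unavailable. Instead, I would prove $\LPO'\lW\CC'$ through $\Lim\lW\CC'$-style reasoning. Concretely, $\LPO'\eW\LPO\circ\Lim$, and the task is to encode $\LPO$ into $\CC$ at the jumped level. For this I would use a limit-computable negative name. The jumped input allows the complement enumeration to be a limit, that is, a $\Delta^0_2$ sequence of opens. With $\Lim$-access we can enumerate $[0,1/2)$ exactly when the current guess says ``no $1$ found yet''.

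\emph{Fallback.} If this encoding is delicate, I would instead prove $\CC'\nW\Lim$ topologically, as in the Baire-category argument of \Cref{thm:inf-degree}. A reduction to $\Lim$ would yield a function that is continuous relative to jump names. Jumped names of intervals are rich enough to code $\Sigma^0_2$ ambiguity between $[0,1/3]$ and $[2/3,1]$ at every finite stage. Applying Baire category on the closed sets $F^i_N$ of stabilized guesses then gives a contradiction, just as for $\INF$.
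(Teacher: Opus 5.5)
\paragraph{Where the proposal matches the paper.} Your arguments for $\IVT'\esW\CC'$, for $\CC\sW\CC'$ via constant arrays, for $\rk(\CC)=1$, and for $\CC'\sW\Lim'\esW\Lim^{(2)}$ are the paper's. Deriving $\CC'\nW\CC$ from the ranks is fine.

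\paragraph{The gap: the lower bound $\CC'\nW\Lim$.} Your main route cannot be repaired. $\LPO'$, equivalently $\INF$, is \emph{not} Weihrauch reducible to $\CC'$, not even continuously. To see this, suppose $K,H$ witnessed $\INF\lW\CC'$. Let $J(a)$ be the interval named by $\Lim(K(a))$. Let $U_i(a)$ be the union of all rational intervals $I$ for which some finite Cauchy-name prefix $\pi$, compatible with every point of $I$, together with some prefix of $a$, makes $H$ halt with answer $i$; this open set is c.e.\ in $a$. The reduction must succeed for every realizer, in particular one returning any chosen name of any chosen point of $J(a)$. Hence every point of $J(a)$ lies in $U_{\INF(a)}(a)$ and none lies in $U_{1-\INF(a)}(a)$, so $\INF(a)=i$ if and only if $J(a)\subseteq U_i(a)$. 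By compactness of $[0,1]$, this is a finite covering statement about finitely many limit values of $K(a)$ and finitely many halting computations. It is therefore $\Sigma^0_2$ in $a$, for both values of $i$. The $\mathbf\Pi^0_2$-complete set of sequences with infinitely many ones would then be $\mathbf\Delta^0_2$, which is false. This is the jumped form of the obstruction behind $\LPO\nW\CC$. A limit column of a negative name can settle on an exclusion on a $\Sigma^0_2$ event, but the opposite exclusion would have to happen on the complementary $\Pi^0_2$ event. Your plan of enumerating $[0,1/2)$ according to the current guess runs into exactly this asymmetry.

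\paragraph{The fallback is the right kind of argument but is incomplete.} It leaves out the parts that make it a proof. The category theorem must be applied on a complete parameter space, so you need three things:
\begin{enumerate}
\item a concrete computable map from, say, $\Cantor$ into \emph{valid} jumped interval names, with the limit promise holding everywhere;
\item a way to turn the real-valued output into a bit, so that the normal form yields continuous maps $g_s:\Cantor\to2$;
\item the density property that every cylinder contains parameters forcing each answer.
\end{enumerate}
The paper obtains all three through the binary cluster-point problem. First, $C_2\sW\CC$: shrink $[0,1]$ to $[0,1/3]$ or $[2/3,1]$, and decode by racing $x<2/3$ against $x>1/3$. Second, $\BWT\sW C_2'$ via the columns $q_s(\pair{i,N})$, which stabilize to an exclusion of $i$ exactly when $a$ has an $i$-free tail from $N$. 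Lifting then gives $\BWT\sW\CC'$. Now if $\CC'\lW\Lim$, then $\BWT\lW\Lim$ gives continuous $g_s$ converging to a selector. The closed sets $F^i_N$ cover $\Cantor$, so one has nonempty interior. A sequence in that cylinder that is eventually constantly $1-i$ yields the contradiction. Your remark that jumped names code $\Sigma^0_2$ ambiguity at every finite stage must be replaced by such an explicit family before the category argument applies.
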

\begin{proof}[Derived consequences, with standard ingredients identified]
The jumped equivalence follows from \Cref{thm:ivt-cc} and \Cref{lem:lift}. Constant sequences of names show $\CC\lW\CC'$. The strong reduction $\CC\sW\Lim$ also lifts, giving
\[
 \CC'\sW\Lim'\equiv_{\mathrm{sW}} \Lim\circ\Lim=\Lim^{(2)}.
\]
We prove $\CC'\nW\Lim$ explicitly; this also proves $\CC'\nW\CC$.

Let $C_2$ be negative-information choice from a nonempty subset of $2$. There is a strong reduction $C_2\sW\CC$: start with $J=[0,1]$; if $1$ is excluded, shrink to $[0,1/3]$; if $0$ is excluded, shrink to $[2/3,1]$. Both distinct exclusions cannot occur on a valid input. For any chosen $x$, race the semidecisions $x<2/3$ and $x>1/3$. In the first case return $0$, in the second $1$. At least one test succeeds, and any possible returned bit is allowed by the original set. The postprocessor does not require the original name.

Define the binary cluster-point problem
\[
 \BWT(a)=\{i\in2:\forall N\;\exists t\geq N\quad a(t)=i\}.
\]
We have $\BWT\sW C_2'$. Indeed, negative names of subsets of $2$ may use $0$ as padding and $i+1$ to exclude $i$. Given $a$, compute
\[
 q_s(\pair{i,N})=
 \begin{cases}
 i+1,&\forall t\in\{N,\ldots,s\}\quad a(t)\neq i,\\
 0,&\text{otherwise}.
 \end{cases}
\]
Each coordinate stabilizes. Its limit enumerates an exclusion of $i$ exactly when there exists a tail containing no $i$, that is, when $i$ is not a cluster value. The named subset is nonempty. Lifting $C_2\sW\CC$ therefore yields
\[
 \BWT\sW C_2'\sW\CC'.
\]

The fact $\BWT\nW\Lim$ is \cite[Proposition 12.5]{BGM}. Here is a direct category proof for the present use, rather than an extraction of that source proof. Otherwise the pure normal form would give continuous binary maps $g_s:\Cantor\to2$ (project the first coordinate of the approximating name to $2$, assigning a default bit to any temporary invalid symbol) converging pointwise to a selector $g$ of $\BWT$. The closed sets $F_N^i=\bigcap_{s\geq N}\{a:g_s(a)=i\}$ cover Cantor space, so one has nonempty interior by the Baire category theorem. On that interior $g=i$. But every nonempty cylinder contains a sequence eventually constantly $1-i$, on which the only permitted cluster value is $1-i$. This is a contradiction. Consequently $\CC'\nW\Lim$, and all the stated rank and strictness claims follow.
\end{proof}

\subsection{Two fixed name transformers that will acquire finite traces}\label{sec:name-transformers}
For the transport construction fix the subspaces
\[
 \mathcal H=\{h\in\mathcal D:h^{-1}(0)\subseteq[1/3,2/3]\},\qquad
 \mathcal J=\{[a,b]:1/3\leq a\leq b\leq2/3\}.
\]
The first has the restricted polygonal representation $\delta_H$; the second has the restricted negative-information representation $\delta_J$. Let $D_H=\dom(\delta_H)$ and $D_J=\dom(\delta_J)$. Both restrictions retain the usual IVT/interval-choice degree. For interval choice, a computable affine contraction moves an arbitrary interval into $[1/3,2/3]$, and an affine decoder reverses it. For IVT, the contracted hinge construction in \Cref{thm:ivt-cc} already outputs a function in $\mathcal H$, and the signed-bracket direction still applies on that restricted domain. Thus each restriction is strongly Weihrauch-equivalent to $\CC$.

\begin{lemma}[Name transformers extracted and adapted from the preceding proofs]\label{lem:alpha-beta}
There are fixed partial computable Baire-space maps
\[
 \alpha:D_H\to D_J,\qquad \beta:D_J\to D_H,\qquad
 \operatorname{sgl}:\rho^{-1}([1/3,2/3])\to D_J
\]
such that
\begin{align}
 \delta_J(\alpha(u))&\subseteq\delta_H(u)^{-1}(0),\label{eq:alpha}\\
 \delta_H(\beta(q))^{-1}(0)&=\delta_J(q),\label{eq:beta}\\
 \delta_J(\operatorname{sgl}(v))&=\{\rho(v)\}.\label{eq:sgl}
\end{align}
There are also partial computable lifted transformers $\alpha^\Delta,\beta^\Delta$ satisfying, on their promised domains,
\begin{equation}\label{eq:exact-lift}
 \Lim(\alpha^\Delta(p))=\alpha(\Lim(p)),\qquad
 \Lim(\beta^\Delta(q))=\beta(\Lim(q)).
\end{equation}
The equalities in \eqref{eq:exact-lift} are equalities of Baire names, not just of the named mathematical objects.
\end{lemma}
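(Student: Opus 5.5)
Each of the three base maps will be obtained by restricting a construction already made in \Cref{thm:ivt-cc}; the lifts will come from a more careful version of the argument behind \Cref{lem:lift}.

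\textbf{The map $\alpha$.} I take the signed-bracket algorithm from the proof of \Cref{thm:ivt-cc}, started at $[0,1]$. It produces nested rational brackets $[a_s,b_s]$ whose intersection $J$ lies inside $h^{-1}(0)$. On $\mathcal H$ we have $J\subseteq h^{-1}(0)\subseteq[1/3,2/3]$, so the converse direction of \Cref{lem:interval-name} yields a negative name of $J$. This name belongs to $D_J$ because the zeros are confined to $[1/3,2/3]$, which gives \eqref{eq:alpha}.

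\textbf{The map $\beta$.} I take the hinge series \eqref{eq:interval-to-ivt}, but without the affine contraction, since $\mathcal J$ already lies in $[1/3,2/3]$. Using the bounds of \Cref{lem:interval-name}, I define $h=\sum_s 2^{-(s+1)}(\pos{x-b_s}-\pos{a_s-x})$. To keep the zero set inside $[1/3,2/3]$, I first clip $a_s$ to $\max(a_s,1/3)$ and $b_s$ to $\min(b_s,2/3)$. The argument in \Cref{thm:ivt-cc} then gives exactly $h^{-1}(0)=[a,b]$, which is \eqref{eq:beta}, and $h(0)<0<h(1)$. A uniform polygonal name comes from the partial sums.

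\textbf{The map $\operatorname{sgl}$.} Given a $\rho$-name $v$ of $x$, I output the bounds $a_s=\max(1/3,r_s-2^{-s})$ and $b_s=\min(2/3,r_s+2^{-s})$. To make them monotone I replace them by running maxima and minima. These bounds converge to $x$, and \Cref{lem:interval-name} then gives \eqref{eq:sgl}.

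\textbf{The lifted transformers.} The main obstacle is \eqref{eq:exact-lift}. The simulation in \Cref{lem:lift} only guarantees that the limit array names the same object. Here we need $\Lim(\alpha^\Delta(p))$ to equal the specific Baire name $\alpha(\Lim p)$. I plan to lift the exact functional $\alpha$ coordinatewise, using the construction in the proof of \Cref{lem:lift}: at stage $s$, output coordinate $j$ is $\alpha(p_s)(j)$ simulated for $s$ steps, and $0$ if the simulation has not halted. That proof already shows this coordinate stabilizes to $\alpha(\Lim p)(j)$ itself, because the true computation uses finitely many oracle positions and steps. So the limit equals the literal name $\alpha(\Lim p)$, which is an equality of Baire names and not just of denoted objects. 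The same construction works for $\beta$.

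Two points need checking. First, the simulation must be run with a fixed machine index for $\alpha$, so that the limit is determined by that machine's output and not merely by the object it names; a default value on intermediate nonhalting stages does not affect the limit. Second, intermediate $p_s$ may lie outside $D_H$, so $\alpha$ must be evaluated as a partial machine that is merely simulated. This requires the promised domain of $\alpha^\Delta$ to be the set of arrays whose limit lies in $D_H$. With those two points verified, \eqref{eq:exact-lift} follows for both lifts.
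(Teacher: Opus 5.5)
Your proposal is correct and follows the paper's proof essentially step for step. You obtain $\alpha$ from the deterministically scheduled signed-bracket construction, $\beta$ from the uncontracted hinge series with bounds clipped to $[1/3,2/3]$, and both lifts from the bounded-simulation construction of \Cref{lem:lift}, whose proof indeed gives stabilization to the literal coordinates $\alpha(\Lim p)(j)$ and $\beta(\Lim q)(j)$. The only cosmetic difference is $\operatorname{sgl}$: the paper directly enumerates rational intervals certified at positive distance from $\rho(v)$, while you pass through monotone Cauchy bounds and the converse of \Cref{lem:interval-name}, and both work.
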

\begin{proof}
Fix deterministic fair scheduling in the signed-bracket construction of \Cref{thm:ivt-cc}; its output negative name defines $\alpha$. Its nonempty limiting interval consists of zeros and is therefore contained in $[1/3,2/3]$.

For $\beta$, compute the nested rational bounds of \Cref{lem:interval-name}, then replace them by $a_s\gets\max(1/3,a_s)$ and $b_s\gets\min(2/3,b_s)$. They still converge monotonically to the endpoints of $J=\delta_J(q)$. Use the \textit{uncontracted} hinge series
\begin{equation}\label{eq:hinge-transport}
 h_q(x)=\sum_{s=0}^\infty2^{-(s+1)} \bigl((x-b_s)_+-(a_s-x)_+\bigr).
\end{equation}
Its tail after $N$ terms has uniform norm at most $2^{-N}$. The sign argument already proved gives $h_q^{-1}(0)=J$. Moreover $h_q(0)\leq-1/3$ and $h_q(1)\geq1/3$, so $h_q\in\mathcal H$. Computing the rational polygonal partial sums defines $\beta$.

From a Cauchy name of $r$, enumerate rational relative open intervals whose closures stay a certified positive distance from $r$. Their union is $[0,1]\setminus\{r\}$; dovetailing the strict distance tests, with padding, defines $\operatorname{sgl}$. Finally apply the bounded-simulation construction of \Cref{lem:lift} to $\alpha$ and $\beta$.
\end{proof}

\begin{remark}[A necessary point-extensionality distinction]\label{rem:names-are-points}
The maps $\alpha$ and $\beta$ are fixed functions of \textit{names}. Different names of the same function may produce different zero intervals; different names of the same interval may produce different functions. Thus these constructions are not being asserted to be point maps on unnamed functions or unnamed intervals. In the SCI packages below the payload name itself is part of the mathematical input. Consequently $p\mapsto\alpha(p)$ is a point map on those name-enriched input spaces. This is essential for applying the point-extensional trace theorem honestly.
\end{remark}

\section{Six presentations of three IVT information packages}\label{sec:ivt-packages}
\subsection{One encoded payload and a separately queried infinite shift}
The construction is deliberately a toy interface. It keeps all evaluation value representations standard and uses precisely a countable product-Cauchy information space. One exact real evaluation encodes an entire payload name. Raw type~G may apply an arbitrary function to that exact value; TTE receives only a Cauchy name and may not perform the same semantic decoding for free. An independent shift, supplied bit by bit rather than as one exact real atom, prevents height-zero raw solvability.

\begin{lemma}[Computable real coding, with inverse on its range; construction supplied here]\label{lem:encoding}
There is a computable injection $\kappa:\B\to[0,1]$ whose inverse on its range is computable from Cauchy names.
\end{lemma}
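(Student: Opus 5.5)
I will use a Cantor-type ternary coding, with the natural numbers first passed through a unary (prefix-free) code. This makes the image a Cantor-like set whose digits are always bounded away from each other, so decoding is continuous and, in fact, computable.

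\emph{Definition of $\kappa$.} Given $p\in\B$, form the binary sequence
\[
 c(p)=1^{p(0)}0\,1^{p(1)}0\,1^{p(2)}0\cdots\in\Cantor.
\]
The map $p\mapsto c(p)$ is computable, since each bit of $c(p)$ needs only finitely many values of $p$. It is injective, and its image consists of the sequences with infinitely many zeros. Then set
\[
 \kappa(p)=\tau(c(p))=\sum_{n\geq0}2c(p)(n)3^{-(n+1)},
\]
with $\tau$ as in \Cref{prop:trace-not-converse}. A Cauchy name is computable: the partial sum up to $n=j$ has error at most $3^{-(j+1)}\leq2^{-j}$, and that partial sum uses only finitely many bits of $c(p)$. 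Injectivity of $\kappa$ follows from injectivity of $\tau$ on $\Cantor$ (the middle-third Cantor expansion) and of $c$.

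\emph{Decoding.} Let $x=\kappa(p)$ be given by a Cauchy name. I will recover the bits of $b=c(p)$ inductively. Suppose the first $n$ bits are known and let $y_n$ be the corresponding finite ternary partial sum. Then $3^{n}(x-y_n)$ lies in $[0,1/3]$ if $b(n)=0$, and in $[2/3,1]$ if $b(n)=1$. These two intervals are separated by the gap $(1/3,2/3)$, so approximating $x$ to error below $3^{-(n+1)}/4$ suffices. Concretely, compare the rational approximation of $3^{n}(x-y_n)$ with the threshold $1/2$: a margin of $1/6$ against an error below $1/12$ makes the comparison certain. This computes $b$, uniformly in the name of $x$. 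Finally recover $p(k)$ as the length of the $k$-th block of ones, found by searching for the $(k+1)$-st zero. That zero exists because $b$ lies in the image of $c$.

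\emph{Domain of the decoder.} The decoder is only required to work on $\kappa(\B)$, so it need not converge elsewhere. Off the image the bit-reading step always succeeds on points of the Cantor set, but the block search may diverge on sequences with only finitely many zeros. This is permitted by ``inverse on its range''.

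\emph{Main obstacle.} The only delicate point is the separation used in decoding. Binary coding of $\B$ would fail, because of dyadic ambiguity at points like $0.0111\ldots=0.1000\ldots$. The ternary choice with digits in $\{0,2\}$ avoids this ambiguity and provides the explicit distinguishing threshold. The unary coding avoids any need for bounded values of $p$.
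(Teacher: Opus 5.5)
Your proposal is correct and follows essentially the same construction as the paper. Both compose a unary delimiter code (the paper uses blocks $0^{p(k)}1$, you use $1^{p(k)}0$) with the ternary Cantor map $\tau$, decode bit by bit across the gap $(1/3,2/3)$, and then parse the blocks. Your residual $3^n(x-y_n)$ with its explicit precision bound is only a cosmetic variant of the paper's iteration $z\mapsto 3z-2b(0)$.
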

\begin{proof}
For $p\in\B$ let
\[
 \eta(p)=0^{p(0)}1\,0^{p(1)}1\,0^{p(2)}1\cdots,\qquad
 \tau(b)=\sum_{n=0}^{\infty}2b(n)3^{-(n+1)},\qquad
 \kappa(p)=\tau(\eta(p)).
\]
The bound for the ternary tail after $N$ digits is $3^{-N}$, proving computability. For $z=\tau(b)$ the alternatives $b(0)=0$ and $b(0)=1$ place $z$ respectively in $[0,1/3]$ and $[2/3,1]$. Finite-precision approximation distinguishes them across the gap. Replace $z$ by $3z-2b(0)$ and repeat. This computes $b$ and proves injectivity. On $\kappa(\B)$ the decoded bit sequence has infinitely many delimiter ones, so parsing the successive zero blocks computes $p$. Range membership need not be decidable and is not being tested.
\end{proof}

Fix
\begin{equation}\label{eq:shift}
 t(b)=\frac14+\frac18\tau(b),\qquad
 t_n(b)=\frac14+\frac18\sum_{j<n}2b(j)3^{-(j+1)},\qquad \ell=\frac14.
\end{equation}
Then $1/4\leq t(b)\leq3/8$ and
\begin{equation}\label{eq:shift-error}
 0\leq t(b)-t_n(b)\leq\frac18\,3^{-n}.
\end{equation}
For $h\in\mathcal H$ set
\begin{equation}\label{eq:shifted-ivt}
 (T_bh)(x)=h\!\left(\max\left\{0,\min\left\{1,\frac{x-t(b)}{\ell}\right\}\right\}\right),
 \qquad x\in[0,1].
\end{equation}
It is continuous, strictly negative at $0$, strictly positive at $1$, and
\begin{equation}\label{eq:shifted-zeros}
 \IVT(T_bh)=t(b)+\ell h^{-1}(0).
\end{equation}
Indeed the outer constant pieces have nonzero values, and on the affine middle segment vanishing is equivalent to vanishing of $h$.

\subsection{The payload domains and their exact semantics}
Write $Z(h)=h^{-1}(0)$. Define function payload domains and their decoded functions by
\begin{align*}
 D^H_{\mathrm A}
 &=\{\pair{u,v}:u\in D_H,\ \rho(v)\in Z(\delta_H(u))\},
 &h_{\mathrm A}(\pair{u,v})&=\delta_H(u),\\
 D^H_{\mathrm B}&=D_H,
 &h_{\mathrm B}(u)&=\delta_H(u),\\
 D^H_{\mathrm C}
 &=\{p\in\dom(\Lim):\Lim(p)\in D_H\},
 &h_{\mathrm C}(p)&=\delta_H(\Lim(p)).
\end{align*}
Define interval payload domains and their decoded intervals by
\begin{align*}
 D^J_{\mathrm A}
 &=\{\pair{q,v}:q\in D_J,\ \rho(v)\in\delta_J(q)\},
 &J_{\mathrm A}(\pair{q,v})&=\delta_J(q),\\
 D^J_{\mathrm B}&=D_J,
 &J_{\mathrm B}(q)&=\delta_J(q),\\
 D^J_{\mathrm C}
 &=\{q\in\dom(\Lim):\Lim(q)\in D_J\},
 &J_{\mathrm C}(q)&=\delta_J(\Lim(q)).
\end{align*}
Thus A includes a supplied solution, B has an ordinary name, and C has a jumped ordinary name. Only the limiting name in C must be valid. No promise is made about its intermediate rows.

For $j\in\{\mathrm A,\mathrm B,\mathrm C\}$ define the SCI problems
\begin{align}
 P_j &: \quad \Omega^H_j=D^H_j\times\Cantor,
 &F_j(p,b)&=t(b)+\ell Z(h_j(p)),\label{eq:Pj}\\
 Q_j &: \quad \Omega^J_j=D^J_j\times\Cantor,
 &G_j(q,b)&=t(b)+\ell J_j(q).\label{eq:Qj}
\end{align}
The output metric space in every case is $([0,1],|\cdot|)$; the output representation is $\rho$. Each $P_j$ asks for an ordinary IVT zero of the function \eqref{eq:shifted-ivt}. Each $Q_j$ asks for a point in an interval, not for its endpoints or a name of the whole interval.

Every one of the six problems has the same \textit{form} of evaluation interface,
\begin{equation}\label{eq:payload-interface}
 \lambda_*(w,b)=\kappa(w),\qquad
 \lambda_n(w,b)=b(n)\quad(n\in\N).
\end{equation}
Index $*$ by $0$ and $n$ by $n+1$. All values are real numbers with ordinary Cauchy names. The full evaluation table is injective. Its image
\[
 I_D=\{(\kappa(w),b(0),b(1),\ldots):w\in D,\ b\in\Cantor\} \subseteq\R^{\N}
\]
has the subspace product-Cauchy representation. It is computably isomorphic to $D\times\Cantor$ with the subspace identity representation: \Cref{lem:encoding} recovers $w$, and approximation within $1/4$ distinguishes each promised bit. Conversely $w,b$ compute all coordinates of the table. We use these isomorphisms whenever writing $\widehat F_j$ or $\widehat G_j$.

\begin{remark}[What remains fixed, and what changes]
The evaluation formula, value representations, output metric and output representation, and the base computation class $\Comp$ are fixed across the six packages. The allowed payload domain and its mathematical interpretation change. In particular, $\kappa(p)$ is an evaluation of the name-enriched input $(p,b)$, not of the unnamed function independently of its name. These are not three degrees of one fixed represented IVT relation. The example is not proposed as a natural numerical oracle; it isolates exactly the distinction between finite dependence on exact values and effective access through their names.
\end{remark}

\subsection{Common raw height, proved pointwise for all six problems}
\begin{theorem}[Raw classification of the constructed family; derived here]\label{thm:ivt-three}
Every member of
\[
 \mathcal U=\{P_{\mathrm A},Q_{\mathrm A},P_{\mathrm B},Q_{\mathrm B},P_{\mathrm C},Q_{\mathrm C}\}
\]
has $\SCImv=1$.
\end{theorem}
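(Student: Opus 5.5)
The plan is to prove, for each of the six problems, an upper bound $\SCImv\le 1$ and a lower bound $\SCImv\ge 1$. Both arguments are uniform in $j\in\{\mathrm A,\mathrm B,\mathrm C\}$ and in the $P/Q$ distinction. They use only two features: the single exact evaluation $\lambda_*=\kappa(w)$ determines the payload $w$, and the shift $b$ is available only bit by bit.

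\textbf{Upper bound.} I would exploit that raw type~G permits arbitrary postprocessing of exact values. Define the deepest algorithms $\Gamma_n(w,b)$ with the fixed query set $\{\lambda_*,\lambda_1,\ldots,\lambda_{n}\}$ (indices as in \eqref{eq:payload-interface}). From the exact value $\kappa(w)$, recover $w$ by the injectivity of $\kappa$ (\Cref{lem:encoding}); no computability is required. Then choose, by an arbitrary but fixed selector (axiom of choice if needed), a point $z(w)$ in the nonempty set $Z(h_j(w))$ or $J_j(w)$. This set is nonempty by the definitions of $D_H$ and $D_J$ and the promise on $D^H_{\mathrm C},D^J_{\mathrm C}$, and it lies in $[1/3,2/3]$. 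Output
\[
\Gamma_n(w,b)=t_n(b)+\ell\, z(w).
\]
This depends only on the queried values, so the general-algorithm axioms hold with a constant query set. By \eqref{eq:shift-error}, $\Gamma_n(w,b)\to t(b)+\ell z(w)$, which belongs to $F_j(w,b)$, respectively $G_j(w,b)$, by \eqref{eq:Pj}--\eqref{eq:Qj} (and \eqref{eq:shifted-zeros} for the $P_j$). The single selector $z$ is fixed once, so the limit is single-valued, as \Cref{def:mv} requires. Hence $\SCImv\le1$.

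\textbf{Lower bound.} Suppose some height-zero general algorithm $\Gamma$ were a selector. Fix any payload $w$ in the relevant domain; every domain is nonempty, for instance via $\beta$ and $\operatorname{sgl}$ of \Cref{lem:alpha-beta}, with constant sequences for C and the supplied solution for A. Take $b=0^\infty$. The query set $Q_\Gamma(w,0^\infty)$ is finite, so it involves only bits $b(n)$ with $n<N$ together with possibly $\lambda_*$. Let $b'$ agree with $0^\infty$ below $N$ and equal $1$ from $N$ on. Then $(w,b')$ agrees with $(w,0^\infty)$ on all queries, so $\Gamma$ gives the same output $y$ on both. However, the solution sets are translates of one fixed set $S\subseteq[1/3,2/3]$ (the set $\ell Z$ or $\ell J$ is identical because $w$ is unchanged) by $t(0^\infty)=1/4$ and by $t(b')>1/4$. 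For example, $S=\{s\}$ is a singleton, so the two sets are disjoint. This contradicts $y$ being a selector on both inputs.

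The main obstacle is the lower bound when $S$ is a nondegenerate interval: the two translates may overlap, so $y$ could lie in both. I would handle this by choosing $w$ whose solution set is a singleton, which is always possible. For Q, take $\operatorname{sgl}$ of a name of $1/2$, paired with that name in case A and repeated as a constant sequence in case C. For P, take $\beta$ of that interval name, using \eqref{eq:beta} to get $Z=\{1/2\}$. With this choice the translates are distinct singletons and the contradiction is immediate.
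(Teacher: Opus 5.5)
Your proposal is correct and is essentially the paper's proof. The height-one tower applies an arbitrary set-theoretic operation to the exact atom $\kappa(w)$ and adds the truncated shift $t_n(b)$. A height-zero selector is refuted by a payload with a singleton solution set (the paper uses $h_*(x)=x-1/2$ and $J_*=[1/2,1/2]$) together with a change of unqueried shift bits.

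The differences are cosmetic. The paper uses the canonical selector $\min$, so no choice is needed, and it flips a single unqueried bit rather than a whole tail. One small index slip: in the notation of \eqref{eq:payload-interface} your query set should read $\{\lambda_*,\lambda_0,\ldots,\lambda_{n-1}\}$, since $t_n(b)$ reads $b(0),\ldots,b(n-1)$.
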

\begin{proof}[Upper bounds, including both general-algorithm axioms]
For $P_j$ define $r_j(p)=\min Z(h_j(p))$. For $Q_j$ define $s_j(q)=\min J_j(q)$. Each minimum exists by nonempty compactness. These are set-theoretic functions of the full payload, whether or not they are computable. The single exact atom $\kappa(w)$ determines $w$ and therefore determines the relevant minimum. Raw type~G imposes no regularity on this postprocessing.

The deepest maps are
\[
 \Gamma^H_{j,n}(p,b)=t_n(b)+\ell r_j(p),\qquad
 \Gamma^J_{j,n}(q,b)=t_n(b)+\ell s_j(q).
\]
Each uses the fixed, finite, nonempty query set $\{\lambda_*\}\cup\{\lambda_m:m<n\}$. Agreement on it determines the payload, the queried shift bits, and the output; the query set itself is fixed. Thus both axioms hold. Equation~\eqref{eq:shift-error} gives convergence, with error at most $3^{-n}/8$, to a permissible answer of the corresponding relation. All intermediate outputs lie in $[0,1]$.
\end{proof}
\begin{proof}[Height-zero obstruction for each member separately]
Fix the function $h_*(x)=x-1/2$ and the interval $J_*=[1/2,1/2]$. Each has a computable standard name; include advice $1/2$ in A and a constant array of the standard name in C. Fix these payloads. In each problem the only permissible answer on shift input $b$ is
\[
 t(b)+\ell/2.
\]
A hypothetical height-zero general selector asks finitely many evaluations at $(w_*,0^\omega)$. Choose an unqueried bit index $m$ and change just that bit to $1$. The payload atom and every queried bit stay the same, but the unique permissible answer changes by $\frac14\,3^{-(m+1)}>0$. General-algorithm output dependence is contradicted. This argument applies separately to each of the six members, not merely to a worst-case member.
\end{proof}

\section{Equivalences inside the transport hierarchy, with every trace supplied}\label{sec:traced-equivalences}
\begin{lemma}[One-atom trace compiler; derived here]\label{lem:atom-trace}
Consider two packages of the form \eqref{eq:payload-interface}. Let $\phi:D_S\to D_P$ be a computable partial Baire-space function, total on $D_S$, and suppose
\[
 F_P(\phi(w),b)\subseteq F_S(w,b)
\]
for every valid input. Then $E(w,b)=(\phi(w),b)$ and $D=\id_{[0,1]}$ witness $S\leq^{\mathrm{mv}}_{\mathrm{TTE},\mathrm{fq}}P$.
If $\phi$ is merely a set-theoretic function, the same conclusion holds with raw in place of TTE.
\end{lemma}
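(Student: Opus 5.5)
The plan is to verify the clauses of \Cref{def:mv-transport} one at a time for $E(w,b)=(\phi(w),b)$ and $D=\id_{[0,1]}$.

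\textbf{Soundness and well-definedness.} Since $\phi$ is total on $D_S$ with values in $D_P$, $E$ maps $\Omega_S=D_S\times\Cantor$ into $\Omega_P=D_P\times\Cantor$. The identity on $[0,1]$ is total and metrically continuous. The inclusion \eqref{eq:mv-sound} reduces to the hypothesis $F_P(\phi(w),b)\subseteq F_S(w,b)$. Nonemptiness holds because every package value has the form $t(b)+\ell Z(\cdot)$ or $t(b)+\ell J(\cdot)$, and $Z$ and $J$ are nonempty. So in the setting of the six packages, nonemptiness of $F_P$ is automatic from \eqref{eq:Pj}--\eqref{eq:Qj}.

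\textbf{Traces.} The target evaluations are $\lambda_*$ (index $0$) and $\lambda_n$ (index $n+1$).
\begin{itemize}
\item For $e=n+1$, the trace is the single source query $\lambda_n$, with $\vartheta_e=\id$, since $\lambda_n(E(w,b))=b(n)$.
\item For $e=0$, the trace is the single source atom $\lambda_*$, with $\vartheta_0=\kappa\circ\phi\circ\kappa^{-1}$ on the transcript image $\kappa(D_S)$. This works because $\lambda_*(E(w,b))=\kappa(\phi(w))$ and $\kappa$ is injective by \Cref{lem:encoding}.
\end{itemize}
In the raw case $\phi$ is only a set-theoretic function, but the same formulas apply, since raw reconstructions need no regularity. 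The raw clause is therefore already finished at this point.

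\textbf{TTE effectivity.} The index lists above are produced uniformly from $e$. The reconstruction $\widehat\vartheta_0$ is the composite of three computable maps applied to a Cauchy name of $\kappa(w)$:
\begin{enumerate}
\item the computable inverse of $\kappa$ on its range from \Cref{lem:encoding}, which recovers $w$;
\item $\phi$;
\item the computable map $\kappa$.
\end{enumerate}
Its domain contains $\kappa(D_S)$, because $\phi$ is defined on $D_S$. Realizers of $E$ and $D$ come from the computable isomorphisms between $I_D$ and $D\times\Cantor$ described after \eqref{eq:payload-interface}. Concretely, decode $w$ and $b$, apply $\phi$ to $w$, and re-encode. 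The identity decoder is trivially computable.

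\textbf{Main obstacle.} The delicate point is $\widehat\vartheta_0$: it acts on names of a real, not on the real itself. The concern is that the inverse of $\kappa$ might behave badly off the range, or that different Cauchy names of $\kappa(w)$ might yield different names of $\kappa(\phi(w))$. Neither matters. Only the range needs handling, and on the range the ternary-gap decoding returns the same $w$ from every name. Different names of the output real are acceptable because \eqref{eq:trace} is an equation between real values. One must also note that $\phi(w)$ is a point of the name-enriched space, as stressed in \Cref{rem:names-are-points}, so $E$ is a genuine point map.
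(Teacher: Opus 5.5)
Your proposal is correct and follows the paper's proof essentially step for step. It uses the same one-query traces, namely $(\lambda_*)$ with reconstruction $\kappa\circ\phi\circ\kappa^{-1}$ on $\kappa(D_S)$ and $(\lambda_n)$ with the identity, together with computability of $E$ via the evaluation-image isomorphisms, the identity decoder, soundness from the assumed inclusion, and the raw case obtained by dropping effectivity. Your separate nonemptiness check is fine, but it holds for any packages of this form because Definition~\ref{def:mv} requires multivalued problems to have nonempty values, not only for the six concrete ones.
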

\begin{proof}
The evaluation-image isomorphisms above make $E$ computable. For the target atom use the one-entry source transcript $(\lambda_*)$ and the reconstruction
\begin{equation}\label{eq:atom-trace}
 \vartheta_*:z\longmapsto
         \kappa\bigl(\phi(\kappa^{-1}(z))\bigr),
       \qquad z\in\kappa(D_S).
\end{equation}
It has a partial computable implementation on a domain containing the transcript image. For target bit query $n$, use $(\lambda_n)$ and the identity reconstruction
\begin{equation}\label{eq:bit-trace}
 \vartheta_n(v)=v.
\end{equation}
A single program, given a target index, returns the appropriate one-query list and machine index. The decoder is total, computable and metrically continuous, and soundness is the assumed solution inclusion. This proves all parts of \Cref{def:mv-transport}. Without computability, the same formulas are admissible unrestricted raw reconstructions.
\end{proof}

\begin{theorem}[Three traced equivalences; constructions derived here]\label{thm:traced-pairs}
For each $j\in\{\mathrm A,\mathrm B,\mathrm C\}$,
\begin{equation}\label{eq:traced-pairs}
 P_j\equiv^{\mathrm{mv}}_{\mathrm{TTE},\mathrm{fq}}Q_j.
\end{equation}
All six witnessing transports use the identity output decoder and exactly one source evaluation per target evaluation.
\end{theorem}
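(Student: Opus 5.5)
The plan is to derive all six transports from \Cref{lem:atom-trace}. For each $j$ and each direction, I will exhibit a computable partial Baire-space map $\phi$ that is total on the source payload domain, lands in the target payload domain, and gives solution inclusion
\[
 t(b)+\ell\,(\text{target zero set or interval})\subseteq t(b)+\ell\,(\text{source zero set or interval}).
\]
The shift $t(b)$ and $\ell$ are shared, so it suffices to compare the unshifted sets. Each such $\phi$ yields $E(w,b)=(\phi(w),b)$, $D=\id_{[0,1]}$, and one-query traces \eqref{eq:atom-trace}, \eqref{eq:bit-trace}. This also gives the statement's claim of the identity decoder and exactly one source evaluation per target evaluation.

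\textbf{Package B.} For $P_{\mathrm B}\leq Q_{\mathrm B}$ use $\phi=\beta$. By \eqref{eq:beta}, $Z(\delta_H(\beta(q)))=\delta_J(q)$, so this direction is in fact the reduction $Q_{\mathrm B}\leq P_{\mathrm B}$. Equality holds, hence inclusion. For the other direction, $P_{\mathrm B}\leq Q_{\mathrm B}$, use $\phi=\alpha$. By \eqref{eq:alpha}, $\delta_J(\alpha(u))\subseteq Z(\delta_H(u))$, which is the needed inclusion, and $\alpha$ maps $D_H$ into $D_J$.

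\textbf{Package C.} Use the lifted maps $\alpha^\Delta,\beta^\Delta$. By \eqref{eq:exact-lift}, $\Lim(\alpha^\Delta(p))=\alpha(\Lim(p))\in D_J$ for $p\in D^H_{\mathrm C}$. Hence $\alpha^\Delta(p)\in D^J_{\mathrm C}$, with $J_{\mathrm C}(\alpha^\Delta(p))=\delta_J(\alpha(\Lim p))\subseteq Z(h_{\mathrm C}(p))$. The $\beta^\Delta$ direction is symmetric and gives equality. The point to check is that the lifted transformers are defined on every array with the promised limit, including arbitrary intermediate rows. The bounded-simulation construction of \Cref{lem:lift} is total on arrays, so this holds.

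\textbf{Package A.} The payload includes a solution. From $\pair{u,v}\in D^H_{\mathrm A}$ I will map to $\pair{\operatorname{sgl}(v),v}$. Here $\rho(v)\in Z(\delta_H(u))\subseteq[1/3,2/3]$ because $u\in D_H$, so $\operatorname{sgl}$ applies. By \eqref{eq:sgl}, the target interval is $\{\rho(v)\}\subseteq Z$, and the advice $v$ lies in it. The pair is therefore in $D^J_{\mathrm A}$. Conversely, map $\pair{q,v}\mapsto\pair{\beta(q),v}$. Then $Z(\delta_H(\beta(q)))=\delta_J(q)\ni\rho(v)$ by \eqref{eq:beta}, giving both membership and equality of solution sets. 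One could equally use $\alpha$ paired with $v$ for the first direction. However, $\alpha$'s interval need not contain $\rho(v)$, so $\operatorname{sgl}$ is the right choice.

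\textbf{Expected obstacle.} The main care point is domain bookkeeping rather than any estimate. I must verify that each $\phi$ lands in the \emph{target payload domain}, including the advice condition in A and validity of the limit in C. I must also confirm that \Cref{lem:atom-trace} applies with $\phi$ as a name-level map, per \Cref{rem:names-are-points}. Once these are settled, reflexive composition is not needed: the six transports directly give the three equivalences \eqref{eq:traced-pairs}.
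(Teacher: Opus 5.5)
Your proposal is correct and matches the paper's proof: the same six payload maps ($\pair{\operatorname{sgl}(v),v}$, $\pair{\beta(q),v}$, $\alpha$, $\beta$, $\alpha^\Delta$, $\beta^\Delta$), the same domain and solution-inclusion checks, and the same appeal to \Cref{lem:atom-trace} for the identity decoder and one-query traces. The only blemish is the opening of your Package B paragraph, which first attaches $\beta$ to $P_{\mathrm B}\leq Q_{\mathrm B}$ before correcting it to $Q_{\mathrm B}\leq P_{\mathrm B}$; state that direction cleanly.
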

\begin{proof}
The encodings preserve $b$ and replace the payload as follows:
\begin{center}
\small
\begin{tabular}{@{}lll@{}}
\toprule
Direction & Source payload & Target payload $\phi(w)$\\
\midrule
$P_{\mathrm A}\to Q_{\mathrm A}$ & $\pair{u,v}$ & $\pair{\operatorname{sgl}(v),v}$\\
$Q_{\mathrm A}\to P_{\mathrm A}$ & $\pair{q,v}$ & $\pair{\beta(q),v}$\\
$P_{\mathrm B}\to Q_{\mathrm B}$ & $u$ & $\alpha(u)$\\
$Q_{\mathrm B}\to P_{\mathrm B}$ & $q$ & $\beta(q)$\\
$P_{\mathrm C}\to Q_{\mathrm C}$ & $p$ & $\alpha^\Delta(p)$\\
$Q_{\mathrm C}\to P_{\mathrm C}$ & $q$ & $\beta^\Delta(q)$\\
\bottomrule
\end{tabular}
\end{center}
The arrow denotes the direction of reduction, not a claim of reverse computational dependence.

For the first row, the interval is the singleton containing the supplied zero; every target solution is therefore a source zero. Its advice remains valid. For the second, \eqref{eq:beta} makes the zero set exactly the source interval, and the supplied point is a valid zero. For the third row, \eqref{eq:alpha} gives a nonempty interval contained in the source zero set; the fourth has equality of those sets. Translating by $t(b)$ and scaling by $\ell$ preserve all inclusions.

For the fifth row, let $u=\Lim(p)\in D_H$. Equation~\eqref{eq:exact-lift} makes $\alpha^\Delta(p)$ a valid jumped interval name with limiting name $\alpha(u)$; its named interval is contained in $Z(\delta_H(u))$. The sixth row similarly has limiting name $\beta(\Lim(q))$ and a function with exactly the source interval as zero set. These statements check the entire target-domain promises, not only formal types of the arrays.

Every displayed payload transformation is computable. Apply \Cref{lem:atom-trace} in each row, with reconstructions \eqref{eq:atom-trace} and \eqref{eq:bit-trace}. Those are the promised finite traces. In particular we have not inferred \eqref{eq:traced-pairs} merely from a Weihrauch equivalence.
\end{proof}

\begin{remark}[What ``one query'' means here]
The reconstruction of the target atom may read arbitrarily many digits of a \textit{name of the one source real value} as output precision increases. The finite trace concerns the number of source evaluations, not a precision-independent bound on the number of bits read from their names. This is exactly the distinction built into the source trace definition. Replacing the atomic interface by individual payload-digit evaluations would require a new analysis; the one-query formula would no longer be a finite exact reconstruction of the target atom.
\end{remark}

\subsection{Forward TTE transports and reverse raw transports}
\begin{proposition}[Refining a raw equivalence class; derived here]\label{prop:raw-collapse}
There are transports
\begin{equation}\label{eq:TTE-chain}
 P_{\mathrm A}\leq^{\mathrm{mv}}_{\mathrm{TTE},\mathrm{fq}}P_{\mathrm B} \leq^{\mathrm{mv}}_{\mathrm{TTE},\mathrm{fq}}P_{\mathrm C},
\end{equation}
and all six members of $\mathcal U$ are raw-finite-query equivalent.
\end{proposition}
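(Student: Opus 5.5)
The plan is to feed every transport through \Cref{lem:atom-trace}. Its hypotheses are a payload map $\phi$ together with a solution inclusion $F_P(\phi(w),b)\subseteq F_S(w,b)$. The TTE version additionally needs $\phi$ computable; the raw version needs only a set-theoretic $\phi$. The remaining work is to construct the two forward computable payload maps and enough set-theoretic maps to close the raw cycle.

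\textbf{The chain \eqref{eq:TTE-chain}.}
For $P_{\mathrm A}\leq P_{\mathrm B}$ take $\phi(\pair{u,v})=u$, the computable projection. Then $Z(h_{\mathrm B}(u))=Z(\delta_H(u))=Z(h_{\mathrm A}(\pair{u,v}))$, so the two solution sets coincide.

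For $P_{\mathrm B}\leq P_{\mathrm C}$ take $\phi(u)$ to be the constant array $u\mapsto(\pair{s,j}\mapsto u(j))$. It is computable and total on $D_H$. Every column is constant, so $\Lim(\phi(u))=u\in D_H$, which gives $\phi(u)\in D^H_{\mathrm C}$ with the same decoded function. Again the solution sets are equal.

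\Cref{lem:atom-trace} then yields both TTE transports, with identity decoder and one-query traces.

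\textbf{Raw equivalence of all six.}
By \Cref{thm:traced-pairs}, each $Q_j$ is TTE-equivalent, hence raw-equivalent (forget computability), to $P_j$. By \Cref{thm:mv-transport} raw transport is a preorder. It therefore suffices to give a raw transport $P_{\mathrm C}\leq^{\mathrm{mv}}_{\mathrm{raw},\mathrm{fq}}P_{\mathrm A}$, closing the cycle $P_{\mathrm A}\leq P_{\mathrm B}\leq P_{\mathrm C}\leq P_{\mathrm A}$.

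Define set-theoretically
\[
\phi(p)=\pair{\Lim(p),v_p},
\]
where $v_p$ is some fixed Cauchy name (say the standard name chosen by a fixed choice function) of $\min Z(\delta_H(\Lim(p)))$. This minimum exists by compactness and nonemptiness, exactly as in the upper-bound proof of \Cref{thm:ivt-three}. Then $\Lim(p)\in D_H$, $\rho(v_p)$ is a zero, so $\phi(p)\in D^H_{\mathrm A}$, and the decoded function equals $h_{\mathrm C}(p)$, so the solution sets coincide.

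The raw clause of \Cref{lem:atom-trace} applies, with reconstruction $z\mapsto\kappa(\phi(\kappa^{-1}(z)))$ on $\kappa(D^H_{\mathrm C})$. This is a legitimate unrestricted raw map because $\kappa$ is injective (\Cref{lem:encoding}). The bits are traced by the identity.

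\textbf{Main obstacle.}
The main point needing care is this last reverse map. It must be a well-defined function of the single exact atom $\kappa(p)$, which requires fixing a choice function for the name $v_p$. It must also land in the \emph{promised} domain $D^H_{\mathrm A}$ rather than merely name the right function. Both are handled by injectivity of $\kappa$ and the compactness argument above.

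This does not make the reverse direction TTE. Computing $\Lim$ or a zero is not computable, and indeed the degrees separate. I would remark on this but not prove it here.
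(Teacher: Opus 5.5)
Your proposal is correct and follows the paper's route. It uses the same two computable forward payload maps fed through \Cref{lem:atom-trace}, a set-theoretic reverse payload map handled by that lemma's raw clause, \Cref{thm:traced-pairs} for the $P_j$/$Q_j$ pairs, and transitivity of raw transport. The only difference is cosmetic: you close the cycle with a single raw transport $P_{\mathrm C}\leq^{\mathrm{mv}}_{\mathrm{raw},\mathrm{fq}}P_{\mathrm A}$ via $p\mapsto\pair{\Lim(p),v_p}$, whereas the paper gives $P_{\mathrm B}\leq^{\mathrm{mv}}_{\mathrm{raw},\mathrm{fq}}P_{\mathrm A}$ (adjoin a chosen Cauchy name of the least root) and $P_{\mathrm C}\leq^{\mathrm{mv}}_{\mathrm{raw},\mathrm{fq}}P_{\mathrm B}$ (apply $\Lim$) separately, and your payload map is exactly their composite.
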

\begin{proof}
For the first forward arrow drop the root advice:
$\phi(\pair{u,v})=u$. The underlying function and the shift do not change. For the second use the constant array $\phi(u)(\pair{s,n})=u(n)$, which has limit $u$. \Cref{lem:atom-trace} provides their TTE traces.

For the reverse raw arrow $P_{\mathrm B}\leq_{\mathrm{raw},\mathrm{fq}}^{\mathrm{mv}}P_{\mathrm A}$, let $r(u)=\min Z(\delta_H(u))$ and choose a definite Cauchy name $\nu(r(u))$ (for example dyadic lower roundings at precision $2^{-(n+1)}$). Set $\phi(u)=\pair{u,\nu(r(u))}$. For $P_{\mathrm C}\leq_{\mathrm{raw},\mathrm{fq}}^{\mathrm{mv}}P_{\mathrm B}$ set $\phi(p)=\Lim(p)$. These are well-defined set-theoretic payload transformations on the promised domains. They preserve the semantic function and have raw one-atom reconstructions, without any claim of computability. \Cref{thm:traced-pairs} supplies raw equivalences between $P_j$ and $Q_j$. Composition proves the assertion for all six members.
\end{proof}

\section{The exact Weihrauch images and the three distinct transport degrees}\label{sec:ivt-degrees}

\begin{theorem}[Exact degrees of the transported presentations; derived here]\label{thm:ivt-degrees}
With the specified product-Cauchy information spaces,
\begin{align}
 \widehat F_{\mathrm A}\esW\widehat G_{\mathrm A}
       &\esW\id_{[0,1]},\label{eq:degree-A}\\
 \widehat F_{\mathrm B}\esW\widehat G_{\mathrm B}
       &\esW\CC,\label{eq:degree-B}\\
 \widehat F_{\mathrm C}\esW\widehat G_{\mathrm C}
       &\esW\CC'.\label{eq:degree-C}
\end{align}
Here $\id_{[0,1]}$ has Cauchy input and output. In ordinary Weihrauch degrees these are the strict chain
\[
 \one\LW\CC\LW\CC'.
\]
We use $\one$ only for the \textit{ordinary} computable degree; no identification of all computable strong degrees is intended.
\end{theorem}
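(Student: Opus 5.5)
The plan is to reduce all six represented problems to their payload core and dispose of the shift coordinate uniformly. By \Cref{thm:traced-pairs} and the forgetting implication \eqref{eq:forget} of \Cref{thm:mv-transport}, $\widehat F_j\esW\widehat G_j$ for each $j$. So it suffices to identify the strong degree of one member per row. I will take $\widehat G_{\mathrm A}$, $\widehat F_{\mathrm B}$ (or $\widehat G_{\mathrm B}$) and $\widehat G_{\mathrm C}$.

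The shift is harmless in both directions. Working through the computable isomorphism $I_D\cong D\times\Cantor$, the value $t(b)$ is computable from $b$, and $\ell=1/4$. Hence from an answer $y\in t(b)+\ell J$ one could recover $(y-t(b))/\ell\in J$ only with access to $b$, which is not available in a strong reduction. I will avoid needing that as follows. For the lower bounds (the core problem reduces to the package), I choose $b=0^\omega$, so $t(b)=1/4$ is a fixed computable constant. The postprocessor $y\mapsto 4y-1$ then needs no input. For the upper bounds (the package reduces to the core), I transform the payload instead of the output. Given $(w,b)$, I compute a new instance of the core problem whose solution set is exactly $t(b)+\ell J_j(w)$, so that the postprocessor is the identity.

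\textbf{Row B.} For $\widehat G_{\mathrm B}\sW\CC$, I use \Cref{lem:interval-name}. I get bounds $a_s\uparrow a$, $b_s\downarrow b$, map them affinely to $t(b)+\ell a_s$, $t(b)+\ell b_s$ (computable reals, still monotone after rational rounding from below and above), and convert back to a negative name. For $\CC\sW\widehat G_{\mathrm B}$, I contract into $[1/3,2/3]$ as in Section~\ref{sec:name-transformers}, use $b=0^\omega$, and decode with a fixed affine map.

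\textbf{Row A.} For $\widehat G_{\mathrm A}\sW\id_{[0,1]}$, I compute $t(b)+\ell\rho(v)$ from the supplied point $v$ and $b$. Conversely, $\id_{[0,1]}\sW\widehat G_{\mathrm A}$ uses $x\mapsto(\operatorname{sgl}(v'),v',0^\omega)$ with $v'$ naming $1/3+x/3$, followed by affine decoding.

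\textbf{Row C.} This is the delicate step, because the shift must be merged into a jumped interval name. For $\widehat G_{\mathrm C}\sW\CC'$, I apply the lifting construction of \Cref{lem:lift} to the computable map $(q,b)\mapsto$ shifted interval name. Concretely, I feed $\Lim$-rows $q_s$ together with the constant rows of $b$, which gives an array whose limit is the shifted name. For the converse, $\CC'\sW\widehat G_{\mathrm C}$, I lift the affine contraction with \Cref{lem:lift} and use $b=0^\omega$. I must check that the lifted array lands in $D^J_{\mathrm C}$. This holds because only the limiting name needs to be valid, and \Cref{lem:lift} gives stabilization to exactly the transformed name.

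Finally, $\id_{[0,1]}$ is computable with computable instances, so it lies in $\one$ for ordinary reducibility. Strictness of $\one\LW\CC\LW\CC'$ is \Cref{prop:ivt-jump}. The main obstacle is keeping the reductions strong while handling the shift; the payload-side absorption described above is the intended fix.
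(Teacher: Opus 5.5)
Your proposal is correct and follows essentially the paper's proof. Trace-forgetting reduces everything to the interval presentations $\widehat G_j$; the shift is absorbed into the preprocessor for the upper bounds; $b=0^\omega$ with the input-free affine decoder $x\mapsto 12x-4$ gives the lower bounds; and row C is bounded-simulation lifting with constant $b$-rows.

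The only deviation is in row B, where the paper avoids rounding the computable reals $t(b)+\ell a_s$. It uses instead the rational bounds $t_s(b)+\ell a_s$ and $t_s(b)+\tfrac18 3^{-s}+\ell b_s$, which are nested by direct computation. Your variant also works, but naive rational rounding is not monotone by itself. You should either take running maxima and minima, or note that the converse half of \Cref{lem:interval-name} only needs the bounds to lie on the correct side of the endpoints and converge to them.
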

\begin{proof}[Reduction to interval presentations]
\Cref{thm:traced-pairs} and the proved trace-forgetting implication give $\widehat F_j\esW\widehat G_j$. It remains to classify $\widehat G_j$; this step is a Weihrauch degree calculation, not another assertion about finite-query equivalence to an unspecified representative of a degree.
\end{proof}
\begin{proof}[Package A]
Given $q=\pair{q_0,v}$ and $b$, compute $r=\rho(v)$ and output $t(b)+\ell r$. This is a computable real-valued solution, so it strongly reduces to $\id_{[0,1]}$ by feeding the computed answer name to that identity problem.

Conversely, given a Cauchy name of $y\in[0,1]$, compute $r=(1+y)/3$, a Cauchy name $v$ of $r$, and the singleton name $\operatorname{sgl}(v)$. Use payload $\pair{\operatorname{sgl}(v),v}$ and $b=0^\omega$. Every output is the unique point
\begin{equation}\label{eq:affine-decoding}
 x=\frac14+\frac14\frac{1+y}{3}=\frac{4+y}{12}.
\end{equation}
The postprocessor $x\mapsto\max\{0,\min\{1,12x-4\}\}$ returns $y$ and does not need the input. The problem is pointed and computable, hence has ordinary degree $\one$.
\end{proof}
\begin{proof}[Package B: both strong reductions]
From a negative name $q$ of $J=[a,b_0]\in\mathcal J$ and the shift bits $b$, compute a negative name of
\[
 J^{\mathrm{sh}}=t(b)+\ell J\subseteq[0,1].
\]
For a completely explicit construction, take nested rational bounds $a_s\uparrow a$, $b_s\downarrow b_0$. Then
\[
 A_s=t_s(b)+\ell a_s,\qquad
 B_s=t_s(b)+\tfrac18\,3^{-s}+\ell b_s
\]
are nested lower and upper bounds for the endpoints of $J^{\mathrm{sh}}$. The lower bounds increase and the upper bounds decrease, because the truncated ternary sum plus its maximum possible tail decreases. Their limits are the correct endpoints. \Cref{lem:interval-name} turns these bounds into a negative interval name. Call $\CC$ on that name and output its answer unchanged. This is strong reducibility; the shift has been absorbed into the \textit{preprocessor}.

For the reverse reduction, given an arbitrary $J\subseteq[0,1]$, computably transform its negative name to a negative name $q$ of $(1+J)/3\in\mathcal J$. Set $b=0^\omega$ and supply $(\kappa(q),0,0,\ldots)$ as the evaluation table of $Q_{\mathrm B}$. Any answer is of the form \eqref{eq:affine-decoding} with $y\in J$. The same affine clamped decoder gives an answer to $\CC$ without the input. This proves \eqref{eq:degree-B}.
\end{proof}
\begin{proof}[Package C: valid jump-domain promises]
Let $q_s\to q_\infty=\Lim(q)$, where $q_\infty$ is a valid negative name of $J\in\mathcal J$. Let $\Sigma(q_\infty,b)$ denote the computable negative-name transformer for $t(b)+\ell J$ just constructed. Apply bounded-simulation lifting to $\Sigma$ using rows $(q_s,b)$, with the same $b$ in every row. This computably produces a jumped negative name whose limit is exactly $\Sigma(q_\infty,b)$. A $\CC'$ call therefore returns a point in $G_{\mathrm C}(q,b)$; the postprocessor is the identity. The intermediate rows need not be valid interval names.

For the reverse reduction, lift the computable negative-name transformer for $J\mapsto(1+J)/3$. Given a $\CC'$ input, this yields a valid jumped negative name $q$ of the contracted interval. Its Baire sequence $q$ is available computably from the original name, even though $\Lim(q)$ is not. Hence $\kappa(q)$ and the fixed zero shift form a computable $Q_{\mathrm C}$ input. Every answer decodes by $x\mapsto12x-4$ on the promised range to an element of the original interval. This proves \eqref{eq:degree-C}. The strict ordinary inequalities were proved in \Cref{thm:ivt-cc} and \Cref{prop:ivt-jump}.
\end{proof}

\begin{theorem}[Exact modal profile of the six-problem family; derived here using the quotient of {\cite[Definition 7.1]{SorgTransport}}]\label{thm:modal-profile}
For the explicitly defined multivalued extension,
\[
 \mathcal O^{\mathrm{mv}}_1(\mathcal U)=\mathcal U.
\]
The raw quotient consists of one class, whereas the TTE quotient consists of exactly the following three classes, in the displayed strict order:
\begin{equation}\label{eq:three-modal-degrees}
 [P_{\mathrm A}]_{\mathrm{TTE}}=[Q_{\mathrm A}]_{\mathrm{TTE}}\;<\;
 [P_{\mathrm B}]_{\mathrm{TTE}}=[Q_{\mathrm B}]_{\mathrm{TTE}}\;<\;
 [P_{\mathrm C}]_{\mathrm{TTE}}=[Q_{\mathrm C}]_{\mathrm{TTE}}.
\end{equation}
Forgetting traces maps these classes to the three distinct strong degrees in \Cref{thm:ivt-degrees}, and then to the ordinary degrees $\one,\CC,\CC'$. On this finite suborder the forgetful map is an order isomorphism onto its image; no global order-isomorphism claim is made.
\end{theorem}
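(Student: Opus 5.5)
The plan is to assemble the statement from results already proved, in four steps.

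\textbf{Step 1 (raw heights).} The equality $\mathcal O^{\mathrm{mv}}_1(\mathcal U)=\mathcal U$ is exactly \Cref{thm:ivt-three}, since every member has $\SCImv=1$.

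\textbf{Step 2 (raw quotient).} The raw quotient consists of a single class by \Cref{prop:raw-collapse}, which makes all six members raw-finite-query equivalent.

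\textbf{Step 3 (TTE quotient and its order).} Inside each package, \Cref{thm:traced-pairs} already gives $P_j\equiv^{\mathrm{mv}}_{\mathrm{TTE},\mathrm{fq}}Q_j$. Hence each TTE class contains the displayed pair, and there are at most three classes. The upward non-strict relations come from the chain \eqref{eq:TTE-chain}, $P_{\mathrm A}\leq P_{\mathrm B}\leq P_{\mathrm C}$. Transitivity is available because \Cref{thm:mv-transport} shows the TTE relation is a preorder. Combining these with the pair equivalences gives every non-strict inequality in \eqref{eq:three-modal-degrees}.

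For strictness and distinctness I argue by contradiction via the forgetful map \eqref{eq:forget}. Suppose, for instance, $P_{\mathrm C}\leq^{\mathrm{mv}}_{\mathrm{TTE},\mathrm{fq}}P_{\mathrm B}$. Trace forgetting would give $\widehat F_{\mathrm C}\sW\widehat F_{\mathrm B}$, hence $\widehat F_{\mathrm C}\lW\widehat F_{\mathrm B}$. By \Cref{thm:ivt-degrees} this says $\CC'\lW\CC$, which contradicts \Cref{prop:ivt-jump}. In the same way, $P_{\mathrm B}\leq_{\mathrm{TTE}}P_{\mathrm A}$ would give $\CC\lW\one$, contradicting the noncomputability of $\CC$ from \Cref{thm:ivt-cc}. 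Any reverse TTE reduction between the A and C classes would compose with the forward chain and produce one of these two forbidden reductions. So the three classes are pairwise distinct and strictly ordered as displayed, and there are exactly three of them.

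\textbf{Step 4 (forgetful map).} The map is well defined and order-preserving by \eqref{eq:degree-chain}. Its values on the three classes are the strong degrees of \Cref{thm:ivt-degrees}, whose ordinary images $\one\LW\CC\LW\CC'$ are pairwise distinct. So the map is injective on this three-element chain. It also reflects order: the ordinary order on the images is strict and linear in the same direction as the TTE order, so $[X]\le[Y]$ in the image forces $X\le Y$ in the chain. An order-preserving bijection between finite chains that reflects order is an order isomorphism, and nothing beyond this finite suborder is claimed.

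\textbf{Main obstacle.} The one point needing care is that strictness in the TTE quotient cannot come from the raw data, since Step 2 shows all six members are raw-equivalent. It must come entirely from the ordinary degree separations via \eqref{eq:forget}. Once that route is used, every step above reduces to invoking the cited results, and I expect no new construction to be needed.
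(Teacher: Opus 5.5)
Your proposal is correct and follows essentially the same route as the paper: common raw height from \Cref{thm:ivt-three}, raw collapse from \Cref{prop:raw-collapse}, at most three ordered TTE classes from the traced pairs and the forward chain, and strictness via trace forgetting \eqref{eq:forget} combined with the degree separations of \Cref{thm:ivt-degrees}. Your explicit order-reflection check in Step 4 only spells out what the paper compresses into ``injectivity follows from those same separations.''
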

\begin{proof}
The common raw height is \Cref{thm:ivt-three}. Raw collapse is \Cref{prop:raw-collapse}. The traced pair equivalences and the forward arrows \eqref{eq:TTE-chain} give at most three ordered TTE classes. A reverse TTE transport would, by \Cref{thm:mv-transport}, give the corresponding reverse ordinary Weihrauch reduction. The strict degree separations in \Cref{thm:ivt-degrees} rule out every such reverse arrow. Thus there are exactly three classes, with no additional identifications. The stated injectivity on this three-element family follows from those same separations.
\end{proof}

\begin{remark}[Three degrees are not three minimal exact sources]
The TTE order in \eqref{eq:three-modal-degrees} is a chain. Its unique minimal class in this ambient family is the A-class, not three incomparable minimal classes. All six members are individually exact at raw height one, which is stronger than merely saying that the family has worst-case height one. This is the family-pointwise distinction developed in \cite[Definitions 2.6-2.7, Lemma 2.9 and Proposition 2.10]{SorgWitness}, now verified directly for our family. We do not infer pointwise exactness merely from one hard member.
\end{remark}

\section{The pure implementations and the limits of proof translation}\label{sec:ivt-implementation}
\begin{theorem}[Explicit compilers and sharp pure heights; derived here using the normal form {\cite[Theorem 4.27]{SorgBridge}}]\label{thm:ivt-pure}
For $R_j=P_j$ or $Q_j$, respectively,
\[
 \puremv(R_{\mathrm A})=0,\qquad
 \puremv(R_{\mathrm B})=1,\qquad
 \puremv(R_{\mathrm C})=2.
\]
In each case one computable functional produces the entire array required by the pure normal form, and all successive limit-domain promises hold.
\end{theorem}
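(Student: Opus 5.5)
The plan is to split each equality into an upper bound, given by one explicit computable preprocessor $K$, and a lower bound, obtained from the degree separations already proved. By \eqref{eq:mv-bridge}, $\puremv(R_j)=\rk(\widehat F_{R_j})$. \Cref{thm:ivt-degrees} gives $\widehat F_{R_{\mathrm A}}$ computable, $\widehat F_{R_{\mathrm B}}\esW\CC$, and $\widehat F_{R_{\mathrm C}}\esW\CC'$. Together with $\rk(\CC)=1$ and $\rk(\CC')=2$ from \Cref{prop:ivt-jump}, this already gives the numerical values. The statement, however, asks for the explicit single functional and the verified domain promises, so I would still write out the three compilers. I would treat $Q_j$ first and transfer to $P_j$ along the TTE transports of \Cref{thm:traced-pairs}, or handle $P_j$ directly by applying $\alpha$ or $\alpha^\Delta$ first.

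\textbf{Package A.} From the evaluation table I decode $w=\pair{q_0,v}$ by \Cref{lem:encoding} and the bits of $b$ by approximation within $1/4$. I then compute a Cauchy name of $t(b)+\ell\rho(v)$, using \eqref{eq:shift-error} for the shift, and take this as $K$ with $\Lim^{(0)}=\id$. Height $0$ is the minimum, so nothing more is needed.

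\textbf{Package B.} I compose the preprocessor from the proof of \eqref{eq:degree-B}: payload $q$ and bits $b$ give nested bounds $A_s\uparrow$ and $B_s\downarrow$ for $J^{\mathrm{sh}}$. The lower bounds $A_s$ converge to $\min G_{\mathrm B}(q,b)$, which is a permissible answer. \Cref{lem:real-limit} turns $(A_s)_s$ into one array whose $\Lim$ is a $\rho$-name; this is the $\CC\sW\Lim$ argument. The one-limit domain condition is exactly the stabilization proved in that lemma. For the lower bound, a height-$0$ implementation would make $\widehat G_{\mathrm B}$ computable, hence $\CC\lW\one$, contradicting noncomputability of $\CC$. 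For $P_{\mathrm B}$ I precompose with $\alpha$.

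\textbf{Package C.} This is the main obstacle: I need to produce one double array, not a lim-computable array of single arrays. The plan is to apply \Cref{lem:lift} to the Package B functional $K_{\mathrm B}$, which is a computable map from (negative name, $b$) to an array. Bounded simulation on the rows $(q_s,b)$ yields a computable double array $K_{\mathrm C}$ with
\[
 \Lim(K_{\mathrm C}(q,b))=K_{\mathrm B}(\Lim(q),b).
\]
This is an equality of names, as in \eqref{eq:exact-lift}. The inner limit exists coordinatewise because $K_{\mathrm B}(\Lim q,b)$ is defined. The outer limit exists because $K_{\mathrm B}$ satisfies the Package B domain condition on the valid name $\Lim(q)$. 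The intermediate rows may be invalid, and the argument should check that bounded simulation outputs $0$ in that case without breaking stabilization. For $P_{\mathrm C}$ I first apply $\alpha^\Delta$ and then $K_{\mathrm C}$, using \eqref{eq:exact-lift}. For the lower bound, a one-limit implementation would give $\CC'\lW\Lim$ through \eqref{eq:degree-C}, contradicting the $\BWT$ category argument of \Cref{prop:ivt-jump}.
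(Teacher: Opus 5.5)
Your proposal is correct and follows essentially the paper's route. The paper uses the same explicit compilers: a direct Cauchy-name computation for A; Lemma~\ref{lem:real-limit} applied to $t_s(b)+\ell a_s$ for B, with $\alpha$ precomposed for $P_{\mathrm B}$; and bounded-simulation lifting of the B array generator, with constant shift rows, for C. Sharpness likewise comes from \Cref{thm:ivt-degrees} together with the noncomputability of $\CC$ and $\CC'\nW\Lim$. The only cosmetic difference is that for $P_{\mathrm C}$ you compose $\alpha^\Delta$ with the lifted $Q$-compiler, whereas the paper lifts $B_H=B_J\circ\alpha$ directly. By \eqref{eq:exact-lift} the two have the same first limit.
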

\begin{proof}
For A, decode the supplied root/point name and $b$, and directly compute a Cauchy name of $t(b)+\ell r$. This is a zero-limit realizer.

For $Q_{\mathrm B}$, compute the rational sequence $x_s=t_s(b)+\ell a_s$ from the nested interval bounds. It converges to $t(b)+\ell\min J$, a permissible answer. \Cref{lem:real-limit} produces a computable array of rational codes whose columns eventually stabilize to a Cauchy name of that answer. Let $B_J(q,b)$ be this array generator. For $P_{\mathrm B}$ set $B_H(u,b)=B_J(\alpha(u),b)$. The interval chosen by $\alpha$ is contained in the source zero set, so the final answer is again valid. Both $B_J$ and $B_H$ are single partial computable Baire-space functionals, total on all promised input names.

For C, write $w_s\to w_\infty=\Lim(w)$ and use the relevant function $B_J$ or $B_H$. Bounded-simulation lifting, with constant shift rows $b$, produces one computable array $K_C(w,b)$ satisfying
\[
 \Lim(K_C(w,b))=B_{J/H}(w_\infty,b).
\]
The first limit exists coordinatewise because each convergent simulation eventually follows the finite computation on the true limiting name. The right-hand side is in $\dom(\Lim)$ by the B-case proof. Thus
\[
 \rho\bigl(\Lim^{(2)}(K_C(w,b))\bigr) \in G_{\mathrm C}(w,b)\quad\hbox{or}\quad F_{\mathrm C}(w,b),
\]
as applicable. Compose with the computable evaluation-table decoders to obtain the required preprocessors on the actual product-Cauchy information spaces.

This proves all upper bounds with uniform witnesses. To prove sharpness, use the degree identifications of \Cref{thm:ivt-degrees}; \Cref{thm:ivt-cc} shows/recalls then that $\mathrm{CC}_{[0,1]}$ is noncomputable, which excludes pure height zero for either $\mathrm{B}$-presentation. \Cref{prop:ivt-jump} shows that $\mathrm{CC}_{[0,1]}'\not\leq_{\mathrm W}\Lim$, which excludes pure height at most one for either $C$-presentation.
Together with the explicit upper bounds, this gives the exact pure heights $0,1,2$; \eqref{eq:mv-bridge} identifies these heights with the corresponding limit-chain ranks.
\end{proof}

To summarize the implementation of the six transport presentations here:\\
The source domains, semantic decoders, and every jump promise are fixed in \eqref{eq:Pj}-\eqref{eq:Qj}. The interface is countable; its value representations and its product representation are ordinary Cauchy representations, not computationally atomic reals. The output is a point with a standard Cauchy name, and $\Comp$ is fixed throughout. \Cref{lem:atom-trace} proves uniformity in the target evaluation index, while \Cref{thm:ivt-pure} proves the separate tower-index uniformity. All transport decoders are total continuous identity maps. Exact degree lower bounds are established independently and are not inferred from raw height. The multivalued raw and pure definitions and the transport inclusion are the explicit local extensions, not source quotations.

\begin{remark}[The raw height-one proof is not itself a computable height-one compiler]\label{rem:no-mechanical}
The raw estimators recover $\min Z(h_j(p))$ or $\min J_j(p)$ by an arbitrary operation on the exact atom $\kappa(p)$. Its formal admissibility in type~G is not a computability certificate. The displayed numerical error $3^{-n}/8$ only controls the shift tail; it says nothing about effectiveness of the recovered minimum. In B the implemented proof replaces this arbitrary operation by a one-limit interval construction; in C two name-level limits are necessary. Consequently an identical raw height and even an explicit raw convergence rate do not determine implemented height.

The successful transport calculation is stronger than merely recording those numerical ranks: it supplies concrete equivalent interval presentations and finite traces, then identifies their exact degree images. But it does not establish an algorithm translating every raw SCI proof into a same-height pure proof, nor does it make an exact Weihrauch degree a function of the raw SCI number.
\end{remark}

\section{A literal single-valued companion}\label{sec:literal}
The multivalued extension was necessary for unrestricted zero selection. It is not necessary for the underlying counterexample to a rank-to-degree rule. 

The following family fits the single-valued SCI definitions of all three works \cite{SorgBridge, SorgWitness, SorgTransport} literally; it is a unique-root \textit{subproblem} of IVT, not a new degree claim about unrestricted IVT.

For $k\in\{0,1,2\}$ let
\[
 D_k^*=\{p\in\dom(\rho\circ\Lim^{(k)}): r_k(p):=\rho(\Lim^{(k)}(p))\in[1/3,2/3]\},
 \qquad \Omega_k^*=D_k^*\times\Cantor.
\]
Use the interface \eqref{eq:payload-interface}, its product-Cauchy information representation, standard real output, and the single-valued target
\[
 \Xi_k(p,b)=t(b)+\ell r_k(p).
\]
It is the unique zero of $T_bh_{r_k(p)}$, where $h_r(x)=x-r\in\mathcal H$.

\begin{theorem}[Unique-root separation under the original single-valued definition; derived here]\label{thm:literal}
For each $k\in\{0,1,2\}$,
\[
 \SCIG(P_k^*)=1,\qquad \widehat\Xi_k\eW\Lim^{(k)}, \qquad \pure(P_k^*)=k.
\]
In particular the three ordinary degrees are $\one$, $\Lim$, and $\Lim^{(2)}$.
\end{theorem}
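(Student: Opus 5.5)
I would prove the three claims in the order raw height, exact degree, pure height. The pure height then follows from the degree via \eqref{eq:bridge}.

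\textbf{Raw height.} I would copy the proof of \Cref{thm:ivt-three} essentially verbatim. For the upper bound, the exact atom $\kappa(p)$ determines $p$ by injectivity of $\kappa$ (\Cref{lem:encoding}), hence determines $r_k(p)$ set-theoretically. The deepest maps are
\[
 \Gamma_n(p,b)=t_n(b)+\ell r_k(p),
\]
with fixed query set $\{\lambda_*\}\cup\{\lambda_m:m<n\}$, and they converge by \eqref{eq:shift-error}. For the lower bound, fix a computable payload $p_*$ naming $1/2$ (for $k=1,2$ use constant arrays). Flipping an unqueried shift bit changes the unique answer by $\tfrac14 3^{-(m+1)}$, which rules out height zero.

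\textbf{Exact degree, upper bound.} Work through the computable isomorphism of the information space with $D_k^*\times\Cantor$. Given $(p,b)$, the functional $x\mapsto t(b)+\ell x$ is computable on Cauchy names, since $t(b)$ is computable from $b$. Applying \Cref{lem:lift} $k$ times with constant rows $b$ gives a computable $K$ with
\[
 \Lim^{(k)}(K(p,b))=\Phi(\Lim^{(k)}(p),b),
\]
where $\Phi$ realizes $(x,b)\mapsto t(b)+\ell x$. The domain conditions hold because the bounded-simulation arrays are total and stabilize on valid limits. This gives $\widehat\Xi_k\lW\Lim^{(k)}$, and by \eqref{eq:nf} also the certificate for $\pure(P_k^*)\le k$.

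\textbf{Exact degree, lower bound.} This is the main step. I would reduce $\Lim^{(k)}$ to $\widehat\Xi_k$. For $k=0$ there is nothing to do, since the problem is computable with a computable instance and so has degree $\one$. For $k\ge1$ I plan three steps.
\begin{enumerate}
\item First show $\Lim^{(k)}\eW\widehat{\mathrm{id}}^{(k)}_{[0,1]}$, meaning the problem of computing a real $y\in[1/3,2/3]$ from a $\rho\circ\Lim^{(k)}$-name. The route is to map a Baire input $q$, coordinate by coordinate, into a Cantor sequence and code it ternarily as $y=\tfrac13+\tfrac13\tau(\cdot)$, with $\tau$ as in \Cref{lem:encoding}; the decoding in that lemma recovers $q$ from any Cauchy name of $y$. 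Two points need care here. The coding of a $\Lim^{(k)}$-input must be done at name level: apply the lifted computable map $\Lim(\text{array})\mapsto$ Cauchy name of $y$ via \Cref{lem:lift}, so that the produced array has $k$-fold limit equal to a Cauchy name of $y$. The unary block coding $\eta$ makes the Baire-to-Cantor step computable on names and survives lifting, because it is computed pointwise from the limit name.
\item Given a $\Lim^{(k)}$-instance, compute such a $p\in D_k^*$ together with $b=0^\omega$, and supply the table $(\kappa(p),0,0,\ldots)$. This is computable because $p$ itself is computable from the input, even though its limit is not.
\item The answer $x=\tfrac14+\tfrac14 y$ yields $y=4x-1$, and then ternary decoding of $y$ recovers the $\Lim^{(k)}$-output.
\end{enumerate}
The delicate point is the ternary decoding from a Cauchy name: it only works because every digit of $y$ is promised to lie in $\{0,2\}$, so there are gaps. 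I must therefore make sure the lifted coding produces exactly the intended real rather than some nearby non-ternary one. That is why I apply lifting to an exact name-level map, so that \eqref{eq:exact-lift}-style equalities of names hold.

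Finally, strictness $\one\LW\Lim\LW\Lim^{(2)}$ follows from \Cref{lem:plpo} together with $\Lim^{(2)}\nW\Lim$. The latter follows from \Cref{thm:inf-degree}: $\INF\lW\Lim^{(2)}$ but $\INF\nW\Lim$. The degree identifications then give $\rk(\widehat\Xi_k)=k$, and \eqref{eq:bridge} gives $\pure(P_k^*)=k$.
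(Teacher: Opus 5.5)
Your proposal is correct and follows essentially the paper's proof. It uses the same tower $t_n(b)+\ell r_k(p)$ with the unqueried-bit obstruction, the same lower reduction (the lifted name transformer for $\tfrac13+\tfrac13\kappa(q)$, the table $(\kappa(u),0,0,\ldots)$, and ternary decoding), and the same strictness argument via \Cref{lem:plpo} and $\INF$. The only minor difference is in the upper bound: you lift the affine map with constant shift rows to get the pure certificate directly (indeed a strong reduction), whereas the paper applies $\Lim^{(k)}$ to the payload and lets the ordinary postprocessor use $b$; both routes are valid.
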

\begin{proof}
The raw tower is $\Gamma_{k,n}(p,b)=t_n(b)+\ell r_k(p)$. The exact atom determines the set-theoretic value $r_k(p)$; the remaining queries are the first $n$ shift bits. The two general-algorithm axioms and the error bound are the same as in \Cref{thm:ivt-three}. Fixing a payload for $r=1/2$ and changing an unqueried bit proves the height-zero obstruction. This time the target is single-valued, so the conclusion is literally about $\SCIG$.

For the upper degree reduction decode $p,b$, use $\Lim^{(k)}$ to obtain a Cauchy name of $r_k(p)$, and compute $t(b)+\ell r_k(p)$ with the original input available to the ordinary postprocessor.

For the lower reduction use the computable embedding
\[
 e:\B\to[1/3,2/3],\qquad e(q)=\tfrac13+\tfrac13\kappa(q).
\]
Its inverse is computable on its range. Let $L$ be a computable name transformer sending $q$ to a Cauchy name of $e(q)$. Iterated bounded-simulation lifting produces a computable $L^{\Delta k}$ such that
\[
 \Lim^{(k)}(L^{\Delta k}(P))=L(\Lim^{(k)}(P))
\]
for $P\in\dom(\Lim^{(k)})$. Apply this to a $\Lim^{(k)}$ instance $P$, set $u=L^{\Delta k}(P)$, and use the SCI input with evaluation table $(\kappa(u),0,0,\ldots)$. Its output is
\[
 x=\tfrac14+\tfrac14e(q),\qquad q=\Lim^{(k)}(P).
\]
From $x$ recover $e(q)=4x-1$ and then $q=\kappa^{-1}(3e(q)-1)$. These postprocessing operations are computable on the promised range, independently of the original input, so $\Lim^{(k)}\sW\widehat\Xi_k$.

Finally $\one\LW\Lim$ by \Cref{lem:plpo}; and $\Lim\LW\Lim^{(2)}$, since otherwise $\INF\lW\Lim^{(2)}$ and $\INF\nW\Lim$ from \Cref{thm:inf-degree} would contradict transitivity. The literal single-valued bridge gives the asserted pure heights.
\end{proof}

\textbf{No unsupported promotion to a traced equivalence.}
The last proof establishes Weihrauch equivalence to the bare operators $\Lim^{(k)}$. It does not endow those operators with arbitrary SCI interfaces and then claim finite-query equivalence. Such an additional claim would require declared benchmark interfaces and traces, exactly as supplied for $P_j,Q_j$ in \Cref{thm:traced-pairs}. Keeping this distinction visible is part of the purpose of \Cref{part:transport}.

\end{document}